\documentclass[reqno]{amsart}
\usepackage{amssymb,amsmath,amsthm}
\usepackage{amsmath,amssymb,amsthm}
\usepackage{enumerate}
\usepackage{doi}
\usepackage{tikz}
\makeatletter
\@namedef{subjclassname@2020}{\textup{2020} Mathematics Subject Classification}
\makeatother

\theoremstyle{plain}
\newtheorem{theorem}{Theorem}[section]
\newtheorem{lemma}[theorem]{Lemma}
\newtheorem{corollary}[theorem]{Corollary}
\newtheorem{proposition}[theorem]{Proposition}

\theoremstyle{definition}

\newtheorem{remark}[theorem]{Remark}

\newcommand{\bt}{\mathbf{t}}
\newcommand{\bu}{\mathbf{u}}
\newcommand{\bv}{\mathbf{v}}
\newcommand{\bw}{\mathbf{w}}

\newcommand{\B}{\mathcal B}
\newcommand{\V}{\mathcal V}
\newcommand{\U}{\mathcal U}
\newcommand{\W}{\mathcal W}
\newcommand{\Lat}{\mathcal L}

\newcommand{\Var}{\mathsf{V}}
\newcommand{\Mod}{\mathsf{Mod}}

\newcommand{\htp}{\operatorname{ht}}

\begin{document}
\title[Equational Theories of Interval Semirings of Posets]
{Equational Theories of Interval Semirings of Posets}

\author{Zidong Gao}
\address{School of Mathematics, Northwest University, Xi'an, 710127, Shaanxi, P.R. China}
\email{zidonggao@yeah.net}

%
%

\author{Yilin Zhou}
\address{School of Mathematics, Northwest University, Xi'an, 710127, Shaanxi, P.R. China}
\email{zhou990619@163.com}

\subjclass[2020]{16Y60, 03C05, 08B15, 08B26}
\keywords{semiring, variety, finite basis problem.}

\begin{abstract}
We study the equational theory and subvariety structure of the
ai-semiring variety
$\V_\infty$ generated by all flat semirings $S(a_1\cdots a_k)$,
where the letters \(a_i\) are pairwise distinct.
Using interval semirings of posets, we characterize its
subdirectly irreducible members and describe variety membership
in terms of jointly separating families of strict order-preserving
maps. We obtain explicit finite identity bases for \(\V_\infty\)
and each $\V_k$ generated by $S(a_1\cdots a_k)$.
Consequently, every flat semiring \(S(W)\) associated with a nonempty set \(W\) of linear words is finitely based.
 This yields finitely based ai-semirings with exactly \(k\)-nilpotent multiplicative reduct for each \(k\geq 1\).

For each \(k\geq 1\), let \(\B_k\) be the subvariety of
\(\V_\infty\) defined by the \((k+1)\)-nilpotent identity.
We prove that each \(\B_k\) is generated by a finite interval
semiring and that every proper subvariety of \(\V_\infty\)
is contained in some \(\B_k\).
The variety \(\B_3\) is a Cross variety with exactly \(11\)
subvarieties, whereas \([\V_k,\B_k]\), \([\V_k,\V_{k+1}]\), and
\([\B_{k-1},\B_k]\) each contain continuum many
subvarieties for every $k\geq 4$. In particular, this provides infinitely many
finitely based finite semirings $S(a_1\cdots a_k)$
whose generated variety has continuum many subvarieties
for every \(k\geq 5\).

Let \(E_k\) be the bounded poset formed by two \(k\)-step chains sharing
only their least and greatest elements.
For \(k\geq4\), we prove that the interval semiring \(I(E_k)\) is nonfinitely based
and that \(\V_k\) is the largest proper subvariety of
\(\mathsf{V}(I(E_k))\).
Moreover, \(\mathsf{V}(I(E_4))\) is the unique limit subvariety
of \(\V_\infty\) and has exactly \(13\) subvarieties.
Finally, we exhibit two finite ai-semirings generating Cross
varieties whose direct product generates a variety with
continuum many subvarieties.
\end{abstract}

\maketitle

\section{Introduction}\label{sec:intro}
An \emph{additively idempotent semiring} (or ai-semiring for short)
is an algebra $(S, +, \cdot)$ of type $(2,2)$ such that the additive reduct $(S, +)$ is a commutative idempotent semigroup,
the multiplicative reduct $(S, \cdot)$ is a semigroup, and the distributive laws
\[
x(y+z) \approx xy + xz, \quad (x+y)z \approx xz + yz
\]
hold.
The additive reduct of  $S$ is a semilattice, and the natural partial order $\leq$ on $S$ defined by
\[
a \leq b\;\Leftrightarrow\; a+b=b
\]
is compatible with addition and multiplication.
Consequently, an ai-semiring is often called a \emph{semilattice-ordered semigroup} (see~\cite{kp}).
Unless otherwise stated, any order-theoretic statement concerning an ai-semiring refers to this order.

The class of ai-semirings
includes the Kleene semiring of regular languages~\cite{con}, the max-plus algebra~\cite{aei},
the semiring of all binary relations on a set~\cite{dol09}, the matrix semiring over an ai-semiring~\cite{bg},
and distributive lattices~\cite{bs}.
These and other similar algebras have played important roles
in several branches of mathematics, including algebraic geometry~\cite{cc}, tropical geometry~\cite{ms}, information science~\cite{gl}, and theoretical computer science~\cite{go}.

The finite basis problem for a class of ai-semirings, one of the most important problems in universal algebra,
concerns the classification of its members according to whether they are finitely based.
In the last two decades, the finite basis problem for ai-semirings has been intensively studied,
and considerable progress has been made;
see, for example,~\cite{dol07, dol09, dgv25, gjrz, gpz05, jrz, pas05, rlzc, rlyc, rjzl, sr, shap23, vol21, wrz, wzr, yrzs, zrc}.
For more information on the finite basis problem of other related algebras such as semigroups and monoids,
see the monograph~\cite{lee23}.

Dolinka~\cite{dol07} found the first example of a nonfinitely based finite ai-semiring.
Jackson~\cite{jac:flat} solved the finite basis problem for the flat extensions of finite groups,
thereby providing infinitely many minimal nonfinitely based finite ai-semirings.
Shao and Ren~\cite{sr} proved that every ai-semiring in the variety generated by all ai-semirings of order two is finitely based.
Jackson et al.~\cite{jrz} and Zhao et al.~\cite{zrc} classified three-element ai-semirings with respect to the finite basis property,
showing that $S_7$ is the unique nonfinitely based three-element ai-semiring.
The Cayley tables of $S_7$ are given in Table~\ref{tb24111401}.

\begin{table}[ht]
\caption{The Cayley tables of $S_7$} \label{tb24111401}
\begin{tabular}{c|ccc}
$+$      &$0$&$a$&$1$\\
\hline
$0$ &$0$&$0$&$0$\\
$a$      &$0$&$a$&$0$\\
$1$      &$0$&$0$&$1$\\
\end{tabular}\qquad
\begin{tabular}{c|ccc}
$\cdot$  &$0$&$a$&$1$\\
\hline
$0$ &$0$&$0$&$0$\\
$a$      &$0$&$0$&$a$\\
$1$      &$0$&$a$&$1$\\
\end{tabular}
\end{table}

Moreover, Volkov~\cite{vol21} and Jackson et al.~\cite{jrz} independently
resolved the finite basis problem for the ai-semiring whose multiplicative reduct is the six-element Brandt monoid.
Ren et al.~\cite{rjzl} provided an infinite number of minimal nonfinitely based ai-semiring varieties.

In much of the above work, flat semirings have played an important (sometimes even a decisive) role.
We now recall their definition.
By a \emph{flat semiring} we mean a nontrivial ai-semiring
whose multiplicative reduct has a zero element $0$ and satisfies $a+b=0$ for all distinct $a,b\in S$.
The three-element algebra $S_7$ is a typical example of a flat semiring.
As observed in~\cite[Lemma 2.2]{jrz} (see also~\cite[Lemma 4.1.1]{ek}),
a semigroup with zero \(0\) becomes a flat semiring if and only if it is \(0\)-cancellative; that is, for all $a, b, c \in S$,
\[
ab = ac \neq 0 \;\Rightarrow\; b = c \quad\text{and}\quad ab = cb \neq 0 \;\Rightarrow\; a = c.
\]

Let $\mathbf{F}$ denote the variety generated by all flat semirings. The following result, due to Jackson et al.~\cite[Lemma 2.1]{jrz}, solves the finite basis problem for $\mathbf{F}$ and characterizes its subdirectly irreducible members.

\begin{lemma}[{\cite[Lemma 2.1]{jrz}}]
\label{lem:flat}\label{siflat}\label{lem24121301}
Every subdirectly irreducible member of \(\mathbf F\) is flat.
Moreover, \(\mathbf F\) is defined, relative to the ai-semiring
axioms, by the finite family of identities
\begin{equation}\label{eq:flat}
 x_1(u+v)x_2+y_1uy_2
 \approx
 x_1(u+v)x_2+y_1vy_2,
\end{equation}
where any of \(x_1,x_2,y_1,y_2\) may be empty.
\end{lemma}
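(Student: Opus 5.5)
Let \(\mathbf F'\) be the class of ai-semirings satisfying all instances of \eqref{eq:flat}. I will show that \(\mathbf F'\) is a variety containing every flat semiring, and that every subdirectly irreducible member of \(\mathbf F'\) is flat. Since every algebra in a variety is a subdirect product of subdirectly irreducible members, this gives \(\mathbf F'\subseteq\mathbf F\). Together with \(\mathbf F\subseteq\mathbf F'\), this yields both assertions at once.

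\emph{Step 1: flat semirings satisfy \eqref{eq:flat}.} Let \(S\) be flat and evaluate \(u,v\) at elements \(a,b\).

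If \(a\neq b\), then \(u+v\) takes the value \(0\). So \(x_1(u+v)x_2=0\), and both sides of \eqref{eq:flat} equal \(0\), because \(0\) is absorbing for \(+\) in a flat semiring: \(0+c=0\) holds for \(c\neq 0\) by flatness and trivially for \(c=0\).

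If \(a=b\), the two sides are literally equal.

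Empty \(x_i,y_i\) cause no trouble.

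\emph{Step 2: subdirectly irreducible members of \(\mathbf F'\) are flat.} Let \(S\in\mathbf F'\) be subdirectly irreducible. From \eqref{eq:flat} I first extract the key consequence. Whenever \(p=s(a+b)t\), with \(s,t\) possibly empty, we get
\[
p+qar=p+qbr
\]
for all \(q,r\in S^1\).

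Next, for each \(p\in S\) define the relation
\[
\theta_p=\{(a,b): p+qar=p+qbr \text{ for all } q,r\in S^1\}.
\]
This is an equivalence relation compatible with multiplication, since it is closed under two-sided multiplication by construction. It is also compatible with addition: if \(p+qar=p+qbr\), then adding \(qcr\) to both sides preserves equality.

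The key consequence says that \(a\,\theta_p\,b\) whenever \(p\) lies in the ideal generated by \(a+b\).

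\emph{Step 3: identify the monolith and force flatness.} Let \(\mu\) be the monolith, and choose \((c,d)\in\mu\) with \(c<d\); this is possible by replacing \(d\) with \(c+d\). Then:

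\begin{enumerate}[(i)]
\item Take any \(a\neq b\). The ideal \(J\) generated by \(a+b\) is nonempty.
\item For \(p\in J\), the congruence \(\theta_p\) contains \((a,b)\). If \(\theta_p\) were not the identity relation, it would contain \(\mu\).
\item Let \(\bigcap_{p\in J}\theta_p\) carry the information. Consider the Rees-type congruence collapsing \(J\), namely \(x\sim y\) iff \(x=y\) or \(x,y\in J\). This is a congruence precisely when \(J\) is a semilattice ideal (downward closed) as well as a multiplicative ideal. I expect to need the set \(\{x: x\le p\}\) instead, using the ordered version.
\item Combining these, I would argue that every pair \((a,b)\) with \(a\neq b\) generates a congruence containing \(\mu\). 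From this, \(S\) must have a zero \(0=a+b\) that is absorbing, with \(a+b=0\) for all distinct \(a,b\).
\end{enumerate}

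The concrete route for (iv) is as follows. Show that the ideal \(I\) generated by all sums \(a+b\) with \(a\neq b\) collapses to a single element. Suppose \(p,p'\in I\) with \(p\neq p'\). Applying \(\theta\)-arguments to the pair \((p,p+p')\) together with \eqref{eq:flat} shows \(p=p+p'\), and symmetrically \(p'=p+p'\), so \(p=p'\). Then \(0:=\) the unique element of \(I\) is a multiplicative zero and the sum of any two distinct elements.

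\emph{Main obstacle.} The hardest step is exactly (iv): converting subdirect irreducibility into the statement that all "sum" elements coincide. The difficulty is choosing the right congruences \(\theta_p\), and showing that the family of them separates points unless \(I\) is a singleton. I would try first to show \(\bigcap_{p\in I}\theta_p=\Delta\) and use that \(\mu\) is contained in every nontrivial congruence.
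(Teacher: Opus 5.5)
Your reduction is sound: Step~1 is correct, each $\theta_p$ is a congruence, and the key consequence holds (also for $p\ge s(a+b)t$, since then $p=p+s(a+b)t$). But the real content of the lemma is the implication from subdirect irreducibility to flatness, and that part is missing.

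\textbf{Step (iv).} The observation that every pair $a\neq b$ generates a congruence containing $\mu$ holds in any subdirectly irreducible algebra, so it tells you nothing. The claim that ``$\theta$-arguments'' applied to $(p,p+p')$ give $p=p+p'$ comes with no argument.

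\textbf{The planned repair is false.} By your own step (ii), every $p\in I$ lies above some $s(a+b)t$ with $a\neq b$. So $\theta_p\neq\Delta$, hence $\theta_p\supseteq\mu$, and $\bigcap_{p\in I}\theta_p\supseteq\mu\neq\Delta$. Indeed, in the target situation $I=\{0\}$ and $\theta_0$ is the full relation.

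\textbf{Step (iii).} Collapsing $J$ respects $+$ when $J$ is an order filter, not a down-set.

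\textbf{$\theta_{p_0}=\Delta$ is not enough.} Even taking intersections over all $p\in S$, your family only yields some $p_0$ with $\theta_{p_0}=\Delta$, and this does not force the sums to coincide. In $S(ab)\times S(ab)\in\mathbf F$, the element $p_0=(ab,ab)$ satisfies $\theta_{p_0}=\Delta$. Yet the sums $(a,a)+(a,b)=(a,0)$ and $(a,a)+(b,b)=(0,0)$ are distinct. So you would have to use subdirect irreducibility a second time, and your proposal has no mechanism for that.

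\textbf{The missing idea.} Your remark about using $\{x\mid x\le p\}$ points the right way: use the syntactic congruence of this down-set,
\[
\kappa_p=\{(a,b)\mid qar\le p\Leftrightarrow qbr\le p\ \text{for all}\ q,r\in S^1\},
\]
which is coarser than $\theta_p$. The argument then runs as follows.
\begin{enumerate}[(1)]
\item Each $\kappa_p$ is a congruence, since $x+y\le p$ iff $x\le p$ and $y\le p$.
\item $\theta_p\subseteq\kappa_p$.
\item $\bigcap_{p\in S}\kappa_p=\Delta$: if $a\not\le b$, take $p=b$ and $q=r=1$.
\item Subdirect irreducibility therefore gives $p_0$ with $\kappa_{p_0}=\Delta$.
\item If $s(a+b)t\le p_0$, your key consequence gives $(a,b)\in\theta_{p_0}\subseteq\kappa_{p_0}$, so $a=b$. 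Hence every sum $a+b$ with $a\neq b$ lies in
\[
N=\{x\mid qxr\not\le p_0\ \text{for all}\ q,r\in S^1\}.
\]
\item Any two elements of $N$ are $\kappa_{p_0}$-related, so $N$ is a singleton $\{0\}$. It is nonempty because $S$ is nontrivial.
\item $N$ is closed under multiplication by $S$ on either side, so $0$ is a multiplicative zero, and $a+b=0$ for all $a\neq b$.
\end{enumerate}
Thus $S$ is flat, and your reduction then delivers both assertions of the lemma.
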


For any cancellative semigroup $(S, \cdot)$,
let $S^0$ denote the semigroup obtained from $S$ by adjoining a multiplicative zero $0$.
Then $S^0$ is $0$-cancellative, and therefore becomes a flat semiring;
we call it the \emph{flat extension} of $S$ and denote it by $\flat(S)$.

A nonempty subset \(J\) of an ai-semiring \(S\) is an \emph{ideal}
if it is a multiplicative ideal and an order filter for the
natural order. Equivalently,
\[
 SJ\cup JS\subseteq J,
 \qquad
 a\in J,\ a\leq b\ \Longrightarrow\ b\in J.
\]
The \emph{ideal quotient} \(S/J\) identifies all elements of \(J\)
and leaves every element of \(S\setminus J\) in a singleton class.
The class \(J\) absorbs both addition and multiplication in the
quotient. We will frequently use this construction to obtain flat semirings from subalgebras of direct products.

Next, we introduce another important class of flat semirings.
Let $W$ be a set of words in the free semigroup $X^+$ over $X$,
and let $W^{\leq}$ denote the set of all nonempty subwords of words in $W$.
Define a multiplication on $W^{\leq} \cup \{0\}$ by
\[
0 \cdot 0 = 0 \cdot \mathbf{u} = \mathbf{u} \cdot 0 = 0
\]
for all $\mathbf{u} \in W^{\leq}$, and for $\mathbf{u}, \mathbf{v} \in W^{\leq}$,
\[
\mathbf{u} \cdot \mathbf{v} =
\begin{cases}
\mathbf{u}\mathbf{v}, & \text{if } \mathbf{u}\mathbf{v} \in W^{\leq}, \\
0, & \text{otherwise}.
\end{cases}
\]
The resulting algebra is a semigroup and is denoted by $S(W)$.
If the empty word $1$ is allowed in the above construction,
then the corresponding semigroup becomes a monoid, which we denote by $M(W)$.
Working in the free commutative semigroup $X_c^+$ (or monoid $X_c^*$) yields the commutative analogues $S_c(W)$ and $M_c(W)$.
For $W = \{\mathbf{u}_i \mid i \in I\}$, we write $S(\mathbf{u}_i \mid i \in I)$ in place of $S(\{\mathbf{u}_i \mid i \in I\})$
to avoid nested braces, and similarly for $M(W)$, $S_c(W)$ and $M_c(W)$.

All of the above semigroups are $0$-cancellative,
and therefore become flat semirings (which we denote by the same symbols).
Moreover, these flat semirings can be realized as ideal quotients of flat extensions of free semigroups or free monoids,
as we now illustrate.
For example, the flat semiring \(S(W)\) is isomorphic to the ideal quotient of \(\flat(X^+)\) with respect to the ideal \(\flat(X^+)\backslash W^{\leq}\).

Gao, Jackson, Ren and Zhao \cite{gjrz} studied the variety
\(\mathsf{V}(S_7)\) and its subvariety \(\mathbf N\) defined by the
\(2\)-nil identity \(x^2\approx 0\) in detail. By introducing the
notion of a \emph{block hypergraph}, they completely characterized
the subdirectly irreducible members of \(\mathbf N\), and proved that
\(\mathbf N\) is exactly the variety generated by all flat semirings
\(S_c(a_1\cdots a_k)\).
For each \(k\geq 1\), let \(\mathbf N_k\) denote the subvariety of
\(\mathbf N\) determined by the \(k\)-nilpotent identity $x_1\cdots x_k\approx 0$.
Using the homomorphism theory of Kneser graphs, they proved that for
every \(k\geq 3\), the intervals
$[\mathsf{V}(S_c(a_1\cdots a_k)),\mathbf N_{k+1}]$ and
$[\mathbf N_k,\mathbf N_{k+1}]$
both contain continuum many subvarieties, and, using the notion of a
strong \(n\)-colouring, they proved that each \(\mathbf N_{k+1}\) is
not finitely generated. In particular, they showed that if a semiring
variety contains every flat semiring \(S_c(a_1\cdots a_k)\), then it
must contain continuum many subvarieties.

These results motivated the study of the three references
\cite{g,gr,gry} that follow the present line of investigation. In
\cite{gr}, the variety \(\mathbf{NF}_3\) generated by all
\(3\)-nilpotent flat semirings was systematically studied. By
introducing the notion of a \emph{graph semiring}, the subdirectly
irreducible members of \(\mathbf{NF}_3\) were completely characterized;
it was proved that \(\mathbf{NF}_3\) has continuum many subvarieties
and that every finite member of \(\mathbf{NF}_3\) is finitely based;
and finally it was shown that \(\mathbf{NF}_3\) has a unique limit
subvariety. Analogously to the idea of homomorphism independence
between hypergraphs, in \cite{gry} the authors used an infinite family of mutually free words
to prove that the interval
\([\mathsf{V}(S_7),\mathsf{V}(B_2^1)]\) contains continuum many subvarieties.
In \cite{g}, it was proved that if a semiring variety \(\mathcal V\) contains,
for every \(n\geq 1\), a semiring \(S_n\) whose multiplicative reduct is
the same as that of \(S_c(a_1\cdots a_n)\), then \(\mathcal V\) contains
continuum many subvarieties.  Using this, the first example of
a finitely based finite semiring with continuum many subvarieties was
obtained. Finally, the question was raised in \cite{g} whether a
noncommutative version of the result holds, that is, whether one can
characterize the subdirectly irreducible members, the cardinality of
the subvariety lattice, and the existence of limit subvarieties for the
variety \(\V_{\infty}\) generated by all \(S(a_1\cdots a_n)\). This is
precisely the starting point of the present paper. By introducing the
notion of interval semirings, we obtain a perfect analogue of the role
played by block hypergraph semirings in the variety \(\mathbf N\), and
derive some parallel conclusions as well as several other interesting
results.

The paper is organized as follows.
Section~\ref{sec:preliminaries} collects the required terminology and
facts about flat semirings and identities.
Section~\ref{sec:interval} introduces interval semirings and joint
separation.
Section~\ref{sec:bases} establishes the structural descriptions and
finite identity bases, and Section~\ref{sec:bounded} studies the
bounded-height varieties \(\B_k\).
Section~\ref{sec:crossb3} determines the subvariety lattice of
\(\B_3\), while Section~\ref{sec:independence} constructs the
continuum of subvarieties from height four onwards.
Section~\ref{sec:limit} proves the nonfinite basis results for
\(I(E_k)\) and identifies the limit variety
\(\mathsf{V}(I(E_4))\).
Section~\ref{sec:threeclasses} proves its uniqueness and gives the
direct-product application.

\section{Preliminaries}\label{sec:preliminaries}

We use standard terminology from universal algebra; see \cite{bs}.
\subsection*{Variety and equational basis}

A class of ai-semirings is a \emph{variety} if it is closed under
taking subalgebras, homomorphic images, and arbitrary direct products.
By Birkhoff's theorem, a class of ai-semirings is a variety if and only
if it is an \emph{equational class}; that is, the class of all
ai-semirings satisfying a certain set of identities. We write
\(\Mod(\Sigma)\) for the class of all ai-semirings satisfying a set
\(\Sigma\) of identities. Accordingly, every identity basis of a variety in this paper
is given relative to the ai-semiring axioms. The lattice of
subvarieties of a variety \(\mathcal{V}\) is denoted by
\(\Lat(\mathcal{V})\), and \(\mathcal{U}\vee\mathcal{W}\) denotes the
join of two varieties.

Let \(\mathcal{V}\) be an ai-semiring variety. Then \(\mathcal{V}\) is
\emph{finitely based} if it can be defined by a finite set of
identities; otherwise, it is \emph{nonfinitely based}.

For a class \(\mathcal{K}\) of ai-semirings, let
\(\mathsf{V}(\mathcal{K})\) denote the variety generated by
\(\mathcal{K}\); that is, the smallest variety containing
\(\mathcal{K}\). When \(\mathcal{K}=\{A_i\mid i\in I\}\), we usually
write \(\mathsf{V}(A_i\mid i\in I)\) instead of
\(\mathsf{V}(\{A_i\mid i\in I\})\) to avoid multiple parentheses. It is
well known that \(\mathsf{V}(\mathcal{K})\) consists precisely of all
homomorphic images of subalgebras of direct products of algebras in
\(\mathcal{K}\). An ai-semiring \(S\) is finitely based (respectively,
nonfinitely based) if the variety \(\mathsf{V}(S)\) is finitely based
(respectively, nonfinitely based).

\subsection*{Cross variety and limit variety}
A variety is \emph{finitely generated} if it is generated by a
finite algebra. It is \emph{locally finite} if each of its finitely
generated algebras is finite. A \emph{Cross variety} is a finitely
based, finitely generated variety with only finitely many subvarieties.
A \emph{limit variety} is a nonfinitely based variety all of whose
proper subvarieties are finitely based.

\subsection*{Subdirectly irreducible algebra}
An algebra is \emph{subdirectly irreducible} if it is nontrivial and its
congruence lattice has a least nontrivial congruence. Here we adopt the
congruence-based definition and exclude the trivial algebra from being
subdirectly irreducible.
Every variety is generated by its subdirectly
irreducible members. In particular, a locally finite variety is
generated by its finite subdirectly irreducible members, and the same
is true of each of its subvarieties.

\subsection*{Nilpotence and subdirect irreducibility}

For a semiring with multiplicative zero \(0\), we say that it is
\emph{nil} if every element has a power equal to \(0\), and
\emph{\(k\)-nil} if \(a^k=0\) for every element \(a\). It is
\emph{\(k\)-nilpotent} if every product of \(k\) elements is \(0\),
and \emph{nilpotent} if it is \(k\)-nilpotent for some \(k\geq 1\).
It is \emph{exactly \(k\)-nilpotent} if it is \(k\)-nilpotent but not
\((k-1)\)-nilpotent.

It is easy to verify that every nilpotent semiring is nil, but the
converse is not true in general.

A nonzero element \(\omega\) of a semiring with zero is an
\emph{annihilator} if \(\omega s=s\omega=0\) for every \(s\).
For a semigroup \(S\), the notation \(S^1\) means \(S\) with an
identity adjoined if necessary.

\begin{lemma}[{\cite[Propositions 1.4 and 1.5]{gr}}]\label{sinilflat}
Let \(S\) be a nil flat semiring. Then \(S\) is subdirectly
irreducible if and only if it has a unique annihilator \(\omega\)
and \(\omega\in S^1aS^1\) for every \(a\ne0\).
In this case, \(\{0,\omega\}\) is the least nonzero multiplicative
ideal of \(S\). If \(S\) is nilpotent, then it is subdirectly
irreducible if and only if it has a unique annihilator.
\end{lemma}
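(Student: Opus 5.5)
We prove the three assertions of the lemma in order, using only the defining properties of a flat semiring: sums of distinct elements are $0$, and $0$ is absorbing for both operations.

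\textbf{Congruence generated by a pair.} Let $\theta$ be the congruence generated by a pair $(a,b)$ with $a\neq b$. We first show that $\theta$ collapses some nonzero element to $0$. Indeed, $a\,\theta\,b$ gives $a=a+a\,\theta\,a+b=0$ whenever $a\neq b$, so at least one of $a,b$ is nonzero and is $\theta$-related to $0$. Next, if $c\,\theta\,0$, then $scs'\,\theta\,0$ for all $s,s'\in S^1$. Also, for any $d\neq c$ we have $c+d=0$ and $0+d=d$, so $d\,\theta\,c+d=0$ only if the ideal generated by $c$ is involved. The cleaner observation is this: the set $\{x: x\,\theta\,0\}$ is a multiplicative ideal $I$. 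Moreover, $x\,\theta\,y$ with $x\neq y$ forces $x,y\in I$. Hence nontrivial congruences correspond to collapsing nonzero multiplicative ideals, and the congruence collapsing an ideal $I\ni0$ is $I^2\cup\Delta$. One checks this is a congruence: for $x\in I$ and any $y$, either $x+y\in\{0\}\cup\{x\}\subseteq I$ or $x+y=0$, and products stay in $I$.

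\textbf{Reduction to minimal ideals.} By the correspondence above, $S$ is subdirectly irreducible if and only if there is a least nonzero ideal $J$ (meaning least among ideals properly containing $\{0\}$), and then the monolith is $J^2\cup\Delta$.

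\textbf{Least ideal and the annihilator.} Assume such a least ideal $J$ exists. Since $S$ is nil, for any nonzero $a\in J$ the ideal $S^1aS^1$ contains a nonzero element $\omega$ with $S\omega=\omega S=0$. To find it, take a nonzero element of maximal "depth"; nilness guarantees that multiplying repeatedly eventually reaches $0$, so the last nonzero product is an annihilator. This step needs care for non-nilpotent nil semigroups. There, use that $\{0,\omega\}$ is itself an ideal for any annihilator $\omega$, so minimality forces $J=\{0,\omega\}$ and $J$ contains only one annihilator. Thus the annihilator is unique, since two annihilators would give two distinct minimal ideals $\{0,\omega\},\{0,\omega'\}$. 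Finally, $\omega\in S^1aS^1$ for every $a\neq0$, since $J\subseteq S^1aS^1$.

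For existence of an annihilator inside $S^1aS^1$ when $S$ is merely nil, I would argue by contradiction. If $S^1aS^1$ contained no annihilator, then every nonzero $x$ in it has some $s$ with $sx\neq0$ or $xs\neq0$. Building a chain this way and using $0$-cancellativity, one gets products of unbounded length. That alone does not contradict nilness, so this step is the main obstacle. Here I would instead cite the minimality of $J$: $J$ is generated by any nonzero element of it, so $J=S^1aS^1$. If $J$ had no annihilator, then $JS\cup SJ$ would contain a nonzero element, and hence equal $J$. Then $a=sat$ with $(s,t)\neq(1,1)$, and iterating gives $a=s^nat^n$ with $s^n=0$ or $t^n=0$, a contradiction.

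\textbf{Converse and the nilpotent case.} Conversely, given a unique annihilator $\omega$ lying in $S^1aS^1$ for all $a\neq0$, every nonzero ideal contains $\omega$, so $\{0,\omega\}$ is least and $S$ is subdirectly irreducible. If $S$ is nilpotent, then every nonzero $a$ admits a longest nonzero product $s a t$, and this product is an annihilator in $S^1aS^1$. Uniqueness of the annihilator then forces it to be $\omega$, so the second condition is automatic.
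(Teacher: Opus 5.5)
Your proof is correct. The paper gives no proof of this lemma; it simply imports it from \cite{gr}, so there is nothing to compare against line by line. Your argument is the natural self-contained one, and it hangs together as follows.
\begin{enumerate}
\item The computation $a=a+a\mathrel{\theta}a+b=0$ for $a\neq b$ shows that every congruence of a flat semiring has the form $(I\times I)\cup\Delta$ for an ideal $I$. Conversely, every such relation is a congruence.
\item Hence subdirect irreducibility is equivalent to the existence of a least nonzero ideal $J$.
\item Minimality gives $J=S^1aS^1$ for each nonzero $a\in J$. The ideal $JS\cup SJ\subseteq J$ must then be $\{0\}$. Otherwise $a=uav$ with $u$ or $v$ in $S$, and iterating plus nilness forces $a=0$.
\item Therefore $J=\{0,\omega\}$ for an annihilator $\omega$. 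This $\omega$ is the unique annihilator and lies in every $S^1aS^1$.
\item The converse direction and the nilpotent case are also fine. In the nilpotent case, a nonzero product $uav$ of maximal length is an annihilator in $S^1aS^1$.
\end{enumerate}

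The write-up needs some cleaning.
\begin{itemize}
\item Delete the abandoned asides in the first paragraph. One of them is false: $0+d=d$ fails in a flat semiring, where $0+d=0$ for $d\neq0$.
\item Delete the ``maximal depth'' argument in the nil case. As you note yourself, it does not work for nil semigroups that are not nilpotent, and the $a=s^nat^n$ argument supersedes it.
\item Add the one-line check that $JS\cup SJ$ is an ideal, which your ``hence equal $J$'' needs. For instance, $S(JS)=(SJ)S\subseteq JS$ because $SJ\subseteq J$, and $(SJ)S=S(JS)\subseteq SJ$ because $JS\subseteq J$.
\item Write $I\times I$ rather than $I^2$, to avoid confusion with the product set $I\cdot I$.
\end{itemize}
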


\subsection*{Terms and identities}

Each element of the free semigroups $X^+$ is called a \emph{word} or a \emph{monomial}, written as bold lowercase letters $\mathbf{u}, \mathbf{v}, \mathbf{w},\ldots$, while ordinary lowercase letters $x,y,z,\ldots$ stand for variables in $X$.

From \cite[Theorem 2.5]{kp} we know that
the set $P_f(X^+)$ of all nonempty finite subsets of $X^+$,
equipped with addition as set-theoretic union and multiplication as elementwise product,
is the free ai-semiring over $X$.
Consequently, an \emph{ai-semiring term} (or simply a \emph{term}) is precisely a nonempty finite subset of $X^+$.
In keeping with standard semiring notation,
we write the term $\{\mathbf{u}_1, \mathbf{u}_2, \dots, \mathbf{u}_n\}$
as the \emph{polynomial} $\mathbf{u}_1 + \mathbf{u}_2 + \cdots + \mathbf{u}_n$.

Now let $\mathbf{w}$ be a word, let $\bu=\mathbf{u}_1 + \mathbf{u}_2 + \cdots + \mathbf{u}_n$ a polynomial. Then
\begin{itemize}
\item  $c(\mathbf{w})$ denotes the \emph{content} of $\mathbf{w}$; that is, the set of all variables occurring in $\mathbf{w}$.

\item $\ell(\mathbf{w})$ denotes the \emph{length} of $\mathbf{w}$; that is, the number of variables occurring in $\mathbf{w}$ counting multiplicities.

\item $c(\mathbf{u}) = \bigcup_{1\leq i\leq n} c(\mathbf{u}_i)$.

\item $\bu(x_1,\ldots,x_n)$ denotes a polynomial with $c(\bu)\subseteq \{x_1, \ldots, x_n\}$.

\end{itemize}

An \emph{ai-semiring identity} (or simply an \emph{identity}) is
a formal expression of the form $\bu \approx \bv$, where $\bu$ and $\bv$ are polynomials.
We say that an ai-semiring $S$ \emph{satisfies} an identity $\bu \approx \bv$ over $X=\{x_1,\ldots,x_n\}$, denoted $S\vDash \bu\approx \bv$, if
\[
\bu(a_1, a_2, \dots, a_n) = \bv(a_1, a_2, \dots, a_n)
\]
for all $a_1, a_2, \dots, a_n \in S$.
Equivalently, $\varphi(\bu) = \varphi(\bv)$ for every semiring homomorphism $\varphi \colon P_f(X^+) \to S$.
Such a homomorphism is also called an \emph{assignment} (or \emph{evaluation}),
and is uniquely determined by the images of the elements of $X$;
for convenience, we may denote it simply by $\varphi \colon X \to S$, or even restrict to $\varphi\colon c(\bu)\cup c(\bv)\to S$.

If \(\mathbf u=\bu_1+\cdots+\bu_m\) and
\(\mathbf v=\bv_1+\cdots+\bv_n\), then the identity
\(\mathbf u\approx\mathbf v\) is equivalent, over ai-semirings,
to the family
\[
 \mathbf u\approx\mathbf u+\mathbf v_j\quad(1\leq j\leq n),
 \qquad
 \mathbf v\approx\mathbf v+\mathbf u_i\quad(1\leq i\leq m).
\]
Thus it suffices to consider identities
\(\mathbf u\approx\mathbf u+\mathbf q\), where \(\mathbf u\)
is a polynomial and \(\mathbf q\) is a word.

Since the semirings studied in this paper are all related to flat
semirings, they possess an absorbing element \(0\). For an ai-semiring
term \(\mathbf{u}\), we write
\[
\mathbf{u}\approx 0
\] for the following
identities
\[
\mathbf{u}x\approx x\mathbf{u}\approx \mathbf{u}+x\approx \mathbf{u},
\]
where \(x\notin c(\mathbf{u})\).

\begin{lemma}\label{lem:flatuuq}
A flat semiring \(S\) satisfies
\(\mathbf u\approx\mathbf u+\mathbf q\) if and only if
\(\varphi(\mathbf q)=\varphi(\mathbf u)\) for every assignment
\(\varphi\) of the variables in
\(c(\mathbf u)\cup c(\mathbf q)\) such that
\(\varphi(\mathbf u)\ne0\).
\end{lemma}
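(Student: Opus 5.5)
The plan is to unfold the definition of satisfaction and then use the flat order structure of \(S\) to compare \(\varphi(\mathbf u+\mathbf q)\) with \(\varphi(\mathbf u)\) for each assignment separately. By definition, \(S\vDash\mathbf u\approx\mathbf u+\mathbf q\) means that \(\varphi(\mathbf u)=\varphi(\mathbf u)+\varphi(\mathbf q)\) for every assignment \(\varphi\colon c(\mathbf u)\cup c(\mathbf q)\to S\). So we only need to understand, for fixed \(a=\varphi(\mathbf u)\) and \(b=\varphi(\mathbf q)\), when \(a=a+b\) holds. The key fact is the flat addition rule: \(x+x=x\), and \(x+y=0\) for \(x\ne y\). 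In particular \(0+y=0\) for every \(y\), since either \(y=0\) or \(y\ne0\).

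For the direction (\(\Leftarrow\)), fix an assignment \(\varphi\) and split into two cases.
\begin{itemize}
\item If \(\varphi(\mathbf u)=0\), then \(\varphi(\mathbf u+\mathbf q)=0+\varphi(\mathbf q)=0=\varphi(\mathbf u)\), and the required equality holds automatically.
\item If \(\varphi(\mathbf u)\neq0\), then the hypothesis gives \(\varphi(\mathbf q)=\varphi(\mathbf u)\). Hence \(\varphi(\mathbf u+\mathbf q)=\varphi(\mathbf u)+\varphi(\mathbf u)=\varphi(\mathbf u)\) by idempotency.
\end{itemize}
In both cases \(\varphi(\mathbf u)=\varphi(\mathbf u+\mathbf q)\), so the identity holds in \(S\).

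For the direction (\(\Rightarrow\)), suppose \(S\) satisfies the identity and let \(\varphi\) be an assignment with \(\varphi(\mathbf u)=a\neq0\). Write \(b=\varphi(\mathbf q)\). Then \(a=a+b\). If \(b\ne a\), flatness would give \(a+b=0\), and so \(a=0\), a contradiction. Therefore \(b=a\), that is, \(\varphi(\mathbf q)=\varphi(\mathbf u)\).

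Assignments of variables outside \(c(\mathbf u)\cup c(\mathbf q)\) do not affect either side, so restricting to this set of variables loses nothing. There is no real obstacle here. The only point needing care is the observation that \(0\) absorbs addition, \(0+y=0\), which follows from the definition of a flat semiring as noted above.
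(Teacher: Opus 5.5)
Your proof is correct and follows the paper's argument. You split on whether \(\varphi(\mathbf u)=0\), where absorption of \(0\) under addition settles it, and otherwise use the flat addition law: \(\varphi(\mathbf u)+\varphi(\mathbf q)=\varphi(\mathbf u)\) holds exactly when \(\varphi(\mathbf q)=\varphi(\mathbf u)\). You simply write out the two implications separately where the paper compresses them into one ``exactly when''.
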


\begin{proof}
If \(\varphi(\mathbf u)=0\), the required equality holds because
\(0\) absorbs addition. Otherwise, the flat addition law gives
\(\varphi(\mathbf u)+\varphi(\mathbf q)=\varphi(\mathbf u)\)
exactly when \(\varphi(\mathbf q)=\varphi(\mathbf u)\).
\end{proof}

\section{Interval semirings of posets}\label{sec:interval}
Let \(\mathbf w\) be a word such that every non-letter subword of it
occurs exactly once. Then each such subword is determined by the pair
\((i,j)\) of the positions of its first and last letters within
\(\mathbf w\). This observation underlies the bracket
representation of the flat semiring \(S(\mathbf w)\), as developed
in~\cite{rjzl}.

We now analyse the special case in which \(\mathbf w\) is a linear word
\(a_1\cdots a_k\); that is, the letters \(a_i\) are pairwise distinct.
However, we have made a suitable modification:
in this paper we use edge labels rather than vertex labels.
Concretely, for the \(k\)-step chain
\[
  C_k:\;0< 1<2<\cdots< k,
\]
the covering pair \((i-1,i)\) represents the single-letter subword
\(a_i\), and more generally the interval \((i-1,j)\) represents the
subword \(a_i a_{i+1}\cdots a_j\). In particular, the whole interval
\((0,k)\) corresponds to the whole word \(a_1\cdots a_k\); see
Figure~\ref{fig:chain-k}. The nonzero products are precisely those of
the form
\[
  (i,j)\cdot(j,t)=(i,t),\quad 0\leq i<j<t\leq k.
\]
\begin{figure}[htbp]
\centering
\begin{tikzpicture}[
  scale=1,
  elem/.style={circle, fill=black, inner sep=1.6pt},
  lab/.style={font=\small},
  edge/.style={thick},
  dashedge/.style={thick, dashed}
]

\node[elem] (n0)  at (0,0) {};
\node[elem] (n1)  at (1.4,0) {};
\node[elem] (n2)  at (2.8,0) {};
\node[elem] (nk1) at (5.6,0) {};
\node[elem] (nk)  at (7.0,0) {};

\node[lab, below=3pt] at (n0)  {\(0\)};
\node[lab, below=3pt] at (n1)  {\(1\)};
\node[lab, below=3pt] at (n2)  {\(2\)};
\node[lab, below=3pt] at (nk1) {\(k-1\)};
\node[lab, below=3pt] at (nk)  {\(k\)};

\draw[edge] (n0) -- (n1);
\draw[edge] (n1) -- (n2);
\draw[edge] (nk1) -- (nk);

\draw[dashedge] (n2) -- (nk1);

\node[lab, above=2pt] at (0.7,0) {\(a_1\)};
\node[lab, above=2pt] at (2.1,0) {\(a_2\)};
\node[lab, above=2pt] at (6.3,0) {\(a_k\)};

\end{tikzpicture}
\caption{The edge-labelled chain \(C_k\).}
\label{fig:chain-k}
\end{figure}

Motivated by this, we now generalise the construction from chains to a
suitable class of posets. Unless explicitly specified otherwise, all
posets considered in this paper are assumed to be finite and to have no
isolated elements, that is, every element is comparable with at least
one other element.  For such a poset, it suffices to list its strict
cover relations, since the remaining order relations are obtained by
taking the reflexive and transitive closure. For example, the notation
\(C_k=\{0<1<2<\cdots<k\}\)
means that the ground set is \(\{0,1,\ldots,k\}\) and that the order
relation is the usual order of the integers.

For an arbitrary poset \(P\) (not necessarily finite), set
\[
I(P)=\{(p,q)\mid p,q\in P,\ p<q\}\dot\cup\{0\},
\]
where \(0\) is a fresh element. Define multiplication on \(I(P)\) by
\[
(p,q)(r,s)=
\begin{cases}
(p,s), & q=r,\\
0,     & q\ne r,
\end{cases}
\]
and
\[
0\cdot (p,q)=(p,q)\cdot 0=0\cdot 0=0.
\]
A direct check shows that \((I(P),\cdot)\) is a semigroup with zero,
and that it is \(0\)-cancellative. Hence, equipping \(I(P)\) with the
flat addition
\[
x+y=
\begin{cases}
x, & x=y,\\
0, & x\ne y,
\end{cases}
\]
makes \(I(P)\) into a flat semiring. We refer to \(I(P)\) as the
\emph{interval semiring of \(P\)}.

To describe the algebraic behaviour of \(I(P)\), we shall also need the
following terminology for posets. The \emph{height} of a poset \(P\) is
the maximum length of a chain in \(P\), denoted by
\(\htp(P)\). For \(a,b\in P\) with \(a\leq b\), let
\(\ell(a,b)\) be the maximum number of strict steps in a chain from
\(a\) to \(b\); in particular, \(\ell(a,a)=0\). A \emph{bounded poset}
is a poset with a least element \(\bot\) and a greatest element
\(\top\), whose elements other than \(\bot\) and \(\top\) are called
\emph{interior elements}. Let \(P\) be a bounded poset of height
\(k\). For \(a\in P\), the \emph{rank} of \(a\) is
\(r(a)=\ell(\bot,a)\). Thus \(r(\bot)=0\), \(r(\top)=k\), and every
interior element has rank between \(1\) and \(k-1\). Clearly, any two
distinct elements of the same rank are incomparable. An element
\(p\in P\) is \emph{rigid} if it lies on a \(k\)-step chain from
\(\bot\) to \(\top\) in \(P\), equivalently, if
\(\ell(\bot,p)+\ell(p,\top)=k\).

Let \(P\) be a poset and \(Q\subseteq P\).
By a \emph{subposet} of \(P\) on \(Q\) we mean a poset
\(Q\) whose order \(\leq_Q\) is a subrelation of \(\leq_P\), that is, such
that \(a \leq_Q b\) implies \(a \leq_P b\) for all \(a,b\in Q\).
The \emph{induced subposet}, in contrast, is obtained by fully
inheriting the order, so that \(a \leq_Q b\) if and only if \(a \leq_P b\) for
all \(a,b\in Q\); equivalently, \(\leq_Q = \leq_P \cap (Q\times Q)\).
No comparability is lost in the induced case, so every
induced subposet is a subposet, but the converse need not hold.
For example, let \(P = \{1<2<3\}\) and \(Q = \{1<3, 2<3\}\).
Then \(Q\) is a subposet of \(P\) but not an induced subposet.

With these notions in hand, we record the following basic properties of
interval semirings.

\begin{lemma}\label{lem:icksa1ak}
Let $P$ be a poset. Then the following statements hold:
\begin{enumerate}[\((\rm i)\)]
\item \(I(Q)\) is a subsemiring of \(I(P)\) whenever \(Q\) is a subposet
of \(P\).

\item \(I(C_k)\) is isomorphic to \(S(a_1\cdots a_k)\) for every
\(k\geq 1\).

\item For any $k\geq 2$, \(I(P)\) is \(k\)-nilpotent if and only if
\(\htp(P)<k\) if and only if \(I(C_k)\notin \Var(I(P))\).

\item \(I(P)\) is \(2\)-nil.

\item An element \((a,b)\in I(P)\) is an annihilator if and only if
\(a\) is a minimal element of \(P\) and \(b\) is a maximal element of
\(P\).

\item \(I(P)\) is subdirectly irreducible if and only if \(P\) is
bounded.
\end{enumerate}
\end{lemma}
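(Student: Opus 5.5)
We verify the six items in order; most are direct from the definitions, and only (iii) and (vi) need a short argument using earlier results.

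For (i), if \(Q\) is a subposet of \(P\), then every strict pair \((p,q)\) with \(p<_Q q\) is a strict pair of \(P\). The multiplication \((p,q)(q,s)=(p,s)\) stays inside \(I(Q)\) by transitivity of \(\leq_Q\), and all other products and sums are \(0\). Hence \(I(Q)\cup\{0\}\) is closed under both operations, and the operations agree with those of \(I(P)\).

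For (ii), the map \((i-1,j)\mapsto a_i\cdots a_j\), \(0\mapsto 0\), is a bijection onto \(S(a_1\cdots a_k)\). Because the letters are distinct, \(a_i\cdots a_j\cdot a_r\cdots a_s\) is a subword exactly when \(r=j+1\). This is precisely the condition \((i-1,j)(j,s)=(i-1,s)\), so the map is an isomorphism.

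For (iv), \((p,q)^2\neq 0\) would force \(q=p\), which is impossible since \(p<q\).

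For (v), \((a,b)(r,s)\neq0\) requires \(r=b\), so some \(s>b\) exists. Thus \((a,b)x=0\) for all \(x\) iff \(b\) is maximal, and dually \(x(a,b)=0\) for all \(x\) iff \(a\) is minimal.

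For (iii), a nonzero product \(x_1\cdots x_k\) of \(k\) elements has the form \((p_0,p_1)(p_1,p_2)\cdots(p_{k-1},p_k)\), which is exactly a \(k\)-step chain \(p_0<\cdots<p_k\). So \(I(P)\) is \(k\)-nilpotent iff \(\htp(P)<k\). If \(\htp(P)\geq k\), then \(C_k\) embeds as an induced subposet, and (i) together with (ii) gives \(I(C_k)\in\Var(I(P))\). Conversely, \(I(C_k)\) is not \(k\)-nilpotent since \(a_1\cdots a_k\neq0\), so if \(\htp(P)<k\), the identity \(x_1\cdots x_k\approx0\) holds in \(I(P)\) but fails in \(I(C_k)\).

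For (vi), the key input is Lemma~\ref{sinilflat}, which applies because \(P\) is finite: \(I(P)\) is then nilpotent by (iii), and nil by (iv). So \(I(P)\) is subdirectly irreducible iff it has a unique annihilator. By (v), the annihilators are the pairs \((m,M)\) with \(m\) minimal, \(M\) maximal and \(m<M\).

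If \(P\) is bounded, \((\bot,\top)\) is the only such pair; the case \(\bot=\top\) is excluded since \(P\) has no isolated elements.

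Conversely, suppose there is exactly one annihilator \((m,M)\). Every element of \(P\) lies on a maximal chain, which runs from some minimal element to some maximal element. Such a chain has at least two elements, because no element is isolated, so it gives an annihilator. The main point is to show that every minimal element \(m'\) lies below some maximal element: \(m'\) is comparable to some element, necessarily above it, and then we extend upward by finiteness. This yields an annihilator \((m',M')\), so uniqueness forces \(m'=m\); dually, every maximal element equals \(M\). Hence every element, being above some minimal and below some maximal element, satisfies \(m\le x\le M\), and \(P\) is bounded.

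The only subtle points are using finiteness and the absence of isolated elements in (vi), and noting \(I(P)\) is nontrivial because \(P\) contains at least one strict pair.
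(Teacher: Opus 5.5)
Your proposal is correct and follows the paper's own approach. The paper also treats (i)--(v) as immediate from the definition of \(I(P)\) and derives (vi) from Lemma~\ref{sinilflat} together with (iv) and (v). Your use of (iii) to invoke the nilpotent clause of that lemma, and your explicit argument that a unique annihilator forces \(P\) to be bounded (using finiteness and the no-isolated-elements convention), simply spell out details the paper leaves implicit.
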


\begin{proof}
The first five items are immediate from the construction of \(I(P)\),
and the last follows from Lemma~\ref{sinilflat} together with (iv)
and (v).
\end{proof}

Although chains are among the simplest posets, they are general enough
to accommodate every finite poset: every finite poset \(P\) embeds
into the chain \(C_{|P|-1}\) as a subposet. Using this fact, we obtain
the following conclusion.
\begin{lemma}\label{lem:interval}
For every poset \(P\), the semiring \(I(P)\) is isomorphic to a subsemiring of \(I(C_{|P|-1})\).
\end{lemma}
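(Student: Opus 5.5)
The plan is to realise \(P\) as a subposet of a chain via a linear extension and then apply Lemma~\ref{lem:icksa1ak}(i). Let \(n=|P|\). This is only meaningful for finite \(P\), as the statement presupposes. Since \(P\) is finite, it admits a linear extension. Concretely, I would choose an enumeration \(p_0,p_1,\ldots,p_{n-1}\) of \(P\) such that \(p_i<p_j\) in \(P\) implies \(i<j\). Such an enumeration is produced inductively by repeatedly removing a minimal element of the remaining finite poset.

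Next I would define the bijection \(f\colon P\to\{0,1,\ldots,n-1\}\) by \(f(p_i)=i\), and transport the order of \(P\) along \(f\). This gives a poset \(Q\) on the ground set \(\{0,\ldots,n-1\}\) with \(i\leq_Q j\) if and only if \(p_i\leq_P p_j\). By the choice of enumeration, \(i\leq_Q j\) implies \(i\leq j\). Hence \(Q\) is a subposet of \(C_{n-1}\) in the sense defined above, and \(Q\cong P\) as posets.

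Then I would observe that a poset isomorphism \(P\to Q\) induces a semiring isomorphism \(I(P)\to I(Q)\), given by \((p,q)\mapsto(f(p),f(q))\) and \(0\mapsto 0\). This map is well defined because \(f\) preserves strict order, and it is bijective because \(f^{-1}\) also preserves order. It respects multiplication because the rule \((p,q)(r,s)=(p,s)\) when \(q=r\), and \(0\) otherwise, is preserved by any bijection of the ground set. It respects the flat addition because any bijection fixing \(0\) does. Finally, Lemma~\ref{lem:icksa1ak}(i) gives that \(I(Q)\) is a subsemiring of \(I(C_{n-1})\), so \(I(P)\) embeds in \(I(C_{|P|-1})\).

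There is no real obstacle here; the only point needing care is the existence of the linear extension. One should also note the degenerate case \(|P|\le 1\), which is excluded anyway by the standing assumption that \(P\) has no isolated elements.
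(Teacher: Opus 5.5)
Your proof is correct and takes essentially the same route as the paper. Both build a linear extension by repeatedly removing minimal elements and embed \(I(P)\) via \((p,q)\mapsto(j(p),j(q))\), \(0\mapsto 0\); the only cosmetic difference is that you factor this through the transported poset \(Q\) and use Lemma~\ref{lem:icksa1ak}(i) for closure, whereas the paper checks directly that the map is an injective homomorphism.
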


\begin{proof}
Since \(P\) is finite, we may construct a map
\[
j:P\to C_{|P|-1}
\]
as follows.
Choose a minimal point of \(P\) and assign it the number \(0\).
Remove this point from \(P\), choose a minimal point of the remaining poset,
and assign it the number \(1\).
Continuing in this way, remove points one by one and assign them the numbers
\(0,1,\dots,|P|-1\) in the order in which they are chosen.
Since a point is always chosen minimally among the remaining points,
we have $j$ is injective and \(j(p)<j(q)\) whenever \(p<q\).

Define a map $\varphi\colon I(P)\to I(C_{|P|-1})$ by $\varphi(0)=0$ and
\[
\varphi((p,q))=(j(p),j(q)),\quad p<q.
\]
It is easy to verify that  \(\varphi\) is also injective and
preserves both the addition and multiplication. Thus $\varphi$ is an injective semiring homomorphism.
Consequently,
\[
\{(j(p),j(q))\mid p<q\}\cup\{0\}
\]
is a subalgebra of \(I(C_{|P|-1})\) which is isomorphic to \(I(P)\), as required.
\end{proof}

To compare interval semirings arising from different posets, it is
useful to relate the underlying posets by maps that preserve strict
order. This motivates the following notion. A map \(f\colon P\to Q\)
between two posets is \emph{strictly order preserving} if \(p<q\)
implies \(f(p)<f(q)\). A family
\(\{f_j\colon P\to Q_j\}_{j\in J}\) of strictly order preserving maps
is said to \emph{jointly separate} \(P\) if any two distinct elements
of \(P\) have different images under at least one member of the family.

\begin{lemma}\label{lem:separation-hsp}
Suppose that a family \(\{f_j\colon P\to Q_j\}_{j\in J}\) of strictly order preserving maps
jointly separates \(P\). Then \(I(P)\in\mathsf{V}(I(Q_j)\mid j\in J)\).
\end{lemma}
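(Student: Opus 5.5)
Each strictly order preserving map \(f\colon P\to Q\) induces a map \(\hat f\colon I(P)\to I(Q)\) by
\[
\hat f(0)=0,\qquad \hat f((p,q))=(f(p),f(q)),
\]
which is well defined because \(p<q\) gives \(f(p)<f(q)\). The plan is to assemble these induced maps into a product map
\[
\Phi\colon I(P)\to\prod_{j\in J} I(Q_j),
\]
show that \(\Phi\) is injective, and then pass to a suitable quotient of its image. The difficulty is that \(\hat f\) need not be a homomorphism. If \(q\ne r\) but \(f(q)=f(r)\), then \((p,q)(r,s)=0\) while \(\hat f((p,q))\hat f((r,s))=(f(p),f(s))\neq0\). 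Similarly, if \((p,q)\ne(r,s)\) have the same image, then addition is not preserved. So we cannot embed \(I(P)\) directly. Instead we work inside the subsemiring generated by the image and quotient by an ideal.

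\textbf{Setup.} Let \(\Pi=\prod_j I(Q_j)\) and let \(T\) be the subsemiring of \(\Pi\) generated by \(\{\Phi(x)\mid x\in I(P)\setminus\{0\}\}\). Every element of \(T\) is a finite sum of finite products of generators. Such a product either is \(\Phi\) of a nonzero product in \(I(P)\) (a chain \((p_0,p_1)(p_1,p_2)\cdots\)), or it is a "bad" element. Let \(J_0\) be the set of elements of \(T\) that are not of the form \(\Phi((p,q))\). The claim is that \(J_0\) is an ideal of \(T\) in the sense of the paper, and that \(T/J_0\cong I(P)\). Then \(I(P)\in\mathsf{H}\mathsf{S}\mathsf{P}(I(Q_j)\mid j\in J)\subseteq\mathsf{V}(I(Q_j)\mid j\in J)\).

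\textbf{Key steps.}
1. Injectivity on nonzero elements. If \((p,q)\ne(p',q')\), say \(p\ne p'\), then some \(f_j\) separates \(p\) and \(p'\), so \(\Phi\) separates the two intervals. Also \(\Phi((p,q))\) has every coordinate nonzero, while \(\Phi(0)=0\), so \(0\) is separated as well.
2. Sums. A sum of two distinct generators is zero in the coordinate where they differ. A sum is therefore either equal to a single generator \(\Phi(x)\) (all summands equal) or it lies in \(J_0\). In the latter case it is not a generator, since generators have all coordinates nonzero.
3. Products. Consider a product \(\Phi((p,q))\Phi((r,s))\). If \(q=r\), it equals \(\Phi((p,s))\). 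If \(q\ne r\), some \(f_j\) has \(f_j(q)\ne f_j(r)\), so that coordinate is \(0\), and the product is in \(J_0\). By induction, every nonzero product in \(T\) that is not a generator has some coordinate equal to \(0\).
4. \(J_0\) is an ideal. Since \(\Pi\) is a product of flat semirings and \(T\subseteq\Pi\), an element of \(T\) has a \(0\) coordinate exactly when it lies in \(J_0\). The set of elements of \(T\) with some zero coordinate is closed under multiplication by anything in \(T\), because \(0\) absorbs multiplication. It is also an order filter: if \(a\le b\) then \(a+b=b\), so a zero coordinate of \(a\) forces a zero coordinate of \(b\), because \(0\) absorbs flat addition. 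Thus \(J_0\) is an ideal of \(T\).
5. The isomorphism. Define \(T/J_0\to I(P)\) by sending the class \(J_0\) to \(0\) and \(\Phi((p,q))\) to \((p,q)\). This is a bijection by step 1. Steps 2 and 3 show that addition and multiplication agree with those of \(I(P)\).

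\textbf{Main obstacle.} The main obstacle is the careful bookkeeping in steps 3 and 4, namely proving that \(T\) decomposes exactly into the set of generators and the set of elements with a zero coordinate. The argument needs three facts. First, every generator has all coordinates nonzero, because \(f_j(p)<f_j(q)\) for each \(j\). Second, this "all coordinates nonzero" set of \(T\) consists only of images of intervals. Third, each interval image arises from exactly one interval, which is the injectivity of step 1. Everything else reduces to routine flat-semiring computation.
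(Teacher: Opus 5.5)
Your proposal is correct and follows essentially the same route as the paper: take the subsemiring of \(\prod_j I(Q_j)\) generated by the tuples \(\bigl((f_j(p),f_j(q))\bigr)_{j\in J}\), factor out the ideal of elements with some zero coordinate, and check that the resulting ideal quotient is isomorphic to \(I(P)\). The only difference is cosmetic. You prove explicitly, by induction on products, that every element of \(T\) is either a generator or has a zero coordinate. The paper instead gets surjectivity of \(I(P)\to A/Z\) from the fact that a homomorphism whose image contains all generators is onto.
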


\begin{proof}
It suffices to show that \(I(P)\) lies in the
homomorphic image of a subalgebra of the direct product of
\(\{I(Q_j)\}_{j\in J}\). Concretely, let \(D=\prod_{j\in J}I(Q_j)\). For each pair
\(p<q\) in \(P\), define
\[
v_{pq}=\bigl((f_j(p),f_j(q))\bigr)_{j\in J}\in D.
\]
By strict order preservation of the maps \(f_j\), every coordinate of \(v_{pq}\) is
nonzero. Let \(A\) be the subsemiring of \(D\) generated by the set
\[
\{v_{pq}\mid p,q\in P,\ p<q\}.
\]

Let \(Z\) be the subset of \(A\) consisting of all elements having at
least one zero coordinate. Since \(0\) absorbs both addition and
multiplication, \(Z\) is an ideal of \(A\). Hence we may form the ideal
quotient \(A/Z\), in which \(Z\) absorbs both addition and
multiplication. Define a map
\[
\varphi\colon I(P)\to A/Z
\]
by sending \(0\) to \(Z\) and \((p,q)\) to \(v_{pq}/Z\). Since
\(v_{pq}/Z=\{v_{pq}\}\),  we shall henceforth identify \(v_{pq}/Z\)
with \(v_{pq}\).

We first show that \(\varphi\) is  injective. If
\((p,q)\neq(p',q')\), then \(p\neq p'\) or \(q\neq q'\). Since the
family \(\{f_j\}\) jointly separates \(P\), some coordinate \(j\) has
\(f_j(p)\neq f_j(p')\) or \(f_j(q)\neq f_j(q')\). Hence
\(v_{pq}(j)\neq v_{p'q'}(j)\), and so $v_{pq}\neq v_{p'q'}$. Thus \(\varphi\) is injective.

To show that \(\varphi\) is a semiring homomorphism, it suffices, by
the construction of \(\varphi\), to verify that \(\varphi\) preserves
addition and multiplication between any two distinct elements
\((p,q),(r,s)\in I(P)\).

The injectivity argument for \(\varphi\) above gives
\[
\varphi\bigl((p,q)+(r,s)\bigr)=\varphi(0)=Z
=v_{pq}+v_{rs}=\varphi(p,q)+\varphi(r,s).
\]

It remains to check multiplicativity. If \(q=r\), then
\[
\varphi(p,q)\varphi(r,s)=v_{pq}v_{rs}=v_{ps}
=\varphi(p,s)=\varphi\bigl((p,q)(r,s)\bigr).
\]
If \(q\neq r\), then, since the family \(\{f_j\}\) jointly separates
\(P\), there exists \(j\in J\) with \(f_j(q)\neq f_j(r)\). For this
coordinate, we deduce that
\[
v_{pq}(j)v_{rs}(j)=(f_j(p),f_j(q))(f_j(r),f_j(s))=0,
\]
and so \(v_{pq}v_{rs}\in Z\). Hence
\[
\varphi\bigl((p,q)(r,s)\bigr)=\varphi(0)=Z
=v_{pq}v_{rs}=\varphi(p,q)\varphi(r,s).
\]

Finally, we note that, since \(\varphi\) is a homomorphism and maps
onto the generators \(\{v_{pq}\mid p,q\in P,\ p<q\}\) of \(A/Z\), it
is surjective. Therefore \(\varphi\colon I(P)\to A/Z\) is an
isomorphism. Thus \(I(P)\in\mathsf{V}(I(Q_j)\mid j\in J)\).
\end{proof}

\begin{remark}
Lemma~\ref{lem:interval} is a special case of
Lemma~\ref{lem:separation-hsp}, since a single injective strictly order preserving map
\(f\colon P\to Q\) is already enough to separate \(P\) itself.
\end{remark}

\section{Identity bases and finite subdirectly irreducible members}\label{sec:bases}

Write \(\V_k = \mathsf{V}(I(C_k)) = \mathsf{V}(S(a_1\cdots a_k))\) and
\(\V_\infty = \bigvee_{k\geq 1}\V_k\). Then
\[
\V_\infty
= \mathsf{V}(I(C_k):k\geq 1)
= \mathsf{V}(S(a_1a_2\cdots a_k):k\geq 1).
\]
It is easy to check that \(\V_\infty\) can also be generated by a
single algebra \(I(C_\infty)\), where
\[
C_\infty = \{0<1<2<\cdots<k<\cdots\}.
\]
Indeed, every \(I(C_k)\) is a subsemiring of \(I(C_{\infty})\); conversely, every finitely generated subsemiring of \(I(C_{\infty})\)
is contained in \(I(C_k)\) for some \(k\) (for example, take \(k\) to be the maximum of the right endpoints of all the intervals involved).
This implies the first result.
\begin{lemma}
\(\V_{\infty} = \mathsf{V}(I(C_k)\mid k\geq 1) = \mathsf{V}(I(C_{\infty}))\).
\end{lemma}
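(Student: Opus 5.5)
The plan is to prove the chain of equalities by two inclusions. The first equality is just the definition: \(\V_\infty=\bigvee_{k\geq1}\V_k\), and the join of the varieties \(\V_k=\mathsf{V}(I(C_k))\) is the variety generated by the union of their generating sets, that is, \(\mathsf{V}(I(C_k)\mid k\geq 1)\). So the real content is \(\mathsf{V}(I(C_k)\mid k\geq1)=\mathsf{V}(I(C_\infty))\).

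For \(\subseteq\), observe that each \(C_k=\{0<1<\cdots<k\}\) is an induced subposet of \(C_\infty\). By Lemma~\ref{lem:icksa1ak}(i), which holds for arbitrary posets since the definition of \(I(P)\) does not use finiteness, \(I(C_k)\) is a subsemiring of \(I(C_\infty)\). Hence every \(I(C_k)\) lies in \(\mathsf{V}(I(C_\infty))\), and so does the variety they generate.

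For \(\supseteq\), we show that \(I(C_\infty)\) satisfies every identity holding in all the \(I(C_k)\); by Birkhoff's theorem this gives the inclusion. Let \(\mathbf u\approx\mathbf v\) be such an identity, and let \(\varphi\) be an assignment of its finitely many variables into \(I(C_\infty)\). The image of \(\varphi\) is a finite set of intervals \((p,q)\) together with possibly \(0\). Let \(k\) be the maximum right endpoint occurring among them, or \(k=1\) if there are none. Every product of intervals with endpoints in \(\{0,\dots,k\}\) is again such an interval or \(0\), and the same holds for every flat sum. Hence the subsemiring generated by the image of \(\varphi\) lies inside \(I(C_k)\subseteq I(C_\infty)\). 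Computing \(\varphi(\mathbf u)\) and \(\varphi(\mathbf v)\) inside \(I(C_k)\), which satisfies the identity, gives \(\varphi(\mathbf u)=\varphi(\mathbf v)\). Thus \(I(C_\infty)\models\mathbf u\approx\mathbf v\).

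Equivalently, one can say that \(I(C_\infty)\) is the directed union of its subsemirings \(I(C_k)\), and every identity involves only finitely many elements. No step is a genuine obstacle. The only point requiring care is checking that \(I(C_k)\) is closed under the operations of \(I(C_\infty)\), i.e.\ that it is a subsemiring and not merely a subset. This is immediate, because products \((i,j)(j,t)=(i,t)\) and flat sums never produce endpoints larger than those already present.
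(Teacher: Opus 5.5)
Your proposal is correct and follows essentially the same argument as the paper. You embed each \(I(C_k)\) in \(I(C_\infty)\) as a subsemiring, and conversely you place any finitely many elements of \(I(C_\infty)\) inside \(I(C_k)\), with \(k\) the largest right endpoint. Phrasing the reverse inclusion through evaluations of identities rather than through finitely generated subsemirings is only the standard equivalent formulation of the paper's remark.
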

Hence the subscript \(\infty\) is appropriate. On the other hand, \(\V_{\infty}\) is indeed not finitely generated, as shown in the proposition below.

\begin{proposition}
\(\V_{\infty}\) is not finitely generated, yet it is contained in a finitely generated variety.
\end{proposition}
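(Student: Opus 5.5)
The plan has two independent parts. For the first part I would show that no finite member of \(\V_\infty\) can generate all of \(\V_\infty\), using nilpotence. For the second part I would exhibit one explicit finite flat semiring whose variety contains every \(I(C_k)\).

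\emph{Not finitely generated.} Every \(I(C_k)\) is \(2\)-nil by Lemma~\ref{lem:icksa1ak}(iv). More precisely, in \(I(C_k)\) the square \(a^2\) is the absorbing element \(0\) for every \(a\). Hence \(\V_\infty\) satisfies \(x^2\approx 0\) in the sense fixed in Section~\ref{sec:preliminaries}, that is, \(x^2y\approx yx^2\approx x^2+y\approx x^2\). Suppose \(\V_\infty=\mathsf V(A)\) with \(A\) finite, say \(|A|=n\). Then \(A\in\V_\infty\) satisfies these identities. Consequently \(A\) has an absorbing element \(0\), namely the common value of all squares, and \(a^2=0\) for every \(a\in A\). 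So the multiplicative reduct of \(A\) is a finite nil semigroup.

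I would then invoke the standard fact that a finite nil semigroup of order \(n\) is \(n\)-nilpotent. Sketch: in a product \(a_1\cdots a_n\), either some prefix already equals \(0\), or two of the \(n\) prefixes coincide. In the latter case \(a_1\cdots a_i=a_1\cdots a_i\,s\) with \(s=a_{i+1}\cdots a_j\), so \(a_1\cdots a_i=a_1\cdots a_i\,s^2=0\). Thus \(A\models x_1\cdots x_n\approx 0\), and this identity passes to \(\mathsf V(A)=\V_\infty\). But \(I(C_n)\in\V_\infty\) is not \(n\)-nilpotent by Lemma~\ref{lem:icksa1ak}(iii), which is a contradiction.

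\emph{Contained in a finitely generated variety.} Let \(B_2=\{e_{11},e_{12},e_{21},e_{22},0\}\) be the five-element Brandt semigroup, with \(e_{ab}e_{cd}=e_{ad}\) if \(b=c\) and \(0\) otherwise. It is \(0\)-cancellative, hence a flat semiring. I would show \(I(C_k)\in\mathsf V(B_2)\) for every \(k\) by repeating the argument of Lemma~\ref{lem:separation-hsp} with \(B_2\) in place of the \(I(Q_j)\).

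Index coordinates by \(t\in C_k\), and put \(\chi_t(p)=1\) if \(p=t\) and \(\chi_t(p)=2\) otherwise. For \(p<q\) set \(v_{pq}=(e_{\chi_t(p)\chi_t(q)})_{t\in C_k}\in B_2^{C_k}\), whose coordinates are all nonzero. The key computation is
\[
v_{pq}v_{rs}=
\begin{cases}
v_{ps}, & q=r,\\
\text{an element with a zero coordinate}, & q\neq r.
\end{cases}
\]
When \(q=r\) every coordinate multiplies correctly. When \(q\ne r\), the coordinate \(t=q\) gives \(e_{\ast1}e_{2\ast}=0\). Distinct pairs \((p,q)\ne(p',q')\) give distinct tuples, since coordinate \(t=p\) or \(t=q\) detects the difference. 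Their sum therefore has a zero coordinate by flatness of \(B_2\).

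Then, exactly as in the proof of Lemma~\ref{lem:separation-hsp}, let \(A\) be the subsemiring generated by the \(v_{pq}\) and \(Z\) the ideal of elements with a zero coordinate. The map \((p,q)\mapsto v_{pq}\), \(0\mapsto Z\), is an isomorphism \(I(C_k)\cong A/Z\). Hence \(\V_\infty\subseteq\mathsf V(B_2)\).

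The only delicate points are checking that the coordinate-wise products and sums behave as in Lemma~\ref{lem:separation-hsp}, and quoting the finite nil semigroup fact correctly. I expect the choice of the finite generator (here the flat \(B_2\)) to be the main point, and the rest is routine.
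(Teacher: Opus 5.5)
Your proof is correct. The first half follows the paper's argument. A finite generator would satisfy \(x^2\approx 0\), so it would be a finite nil semigroup and hence nilpotent, which contradicts \(I(C_n)\in\V_\infty\). The only difference is that you prove the nil-implies-nilpotent step yourself, by the prefix pigeonhole with \(s^2=0\), where the paper cites it.

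The second half takes a genuinely different route. The paper constructs nothing here: it quotes an external result placing every \(S(a_1\cdots a_k)\) in \(\mathsf V(B_0)\) for a four-element flat semiring \(B_0\). You instead rerun the proof of Lemma~\ref{lem:separation-hsp} with the flat Brandt semiring \(B_2\) as target and the indicators \(\chi_t\) as separating family. Your checks are exactly the ones that proof needs: all coordinates are nonzero, \(v_{pq}v_{qs}=v_{ps}\), coordinate \(t=q\) is zero when \(q\neq r\), and distinct pairs differ at \(t=p\) or \(t=q\). Hence \(A=Z\cup\{v_{pq}\}\) and \(I(C_k)\cong A/Z\).

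Your version buys self-containment and generality. If you index coordinates by the points of an arbitrary poset \(P\), the same computation gives \(I(P)\in\mathsf V(B_2)\) directly, even for infinite \(P\). The citation buys a smaller generator, but you can match it. Take \(\chi_t(p)=1\) for \(p\leq t\) and \(\chi_t(p)=2\) otherwise. Then all coordinates of the \(v_{pq}\) lie in \(\{e_{11},e_{12},e_{22}\}\), and a mismatch \(q\neq r\) is detected at \(t=\min\{q,r\}\). The whole argument then runs inside the four-element flat subsemiring \(\{e_{11},e_{12},e_{22},0\}\) of \(B_2\), which is the semigroup usually denoted \(B_0\).
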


\begin{proof}
Suppose, for contradiction, that \(\V_{\infty}\) is generated by a finite ai-semiring \(S\).
Since every interval semiring \(S(I(C_k))\) is \(2\)-nil, that is, satisfies the identity \(x^2\approx 0\),
\(S\) also satisfies this identity. Hence the multiplicative semigroup of \(S\) is a finite nil semigroup, and is therefore nilpotent (see \cite[Lemma 3.1.16]{sapir14}).
Assume that \(S\) is \(k\)-nilpotent for some $k$. Then \(S\) satisfies the \(k\)-nilpotent identity \(x_1\cdots x_k\approx 0\). In particular, \(I(C_k)\in \mathsf{V}(S)\),
so \(I(C_k)\) also satisfies this identity, a contradiction. Therefore, \(\V_{\infty}\) is not finitely generated.

On the other hand, by \cite[Proposition 5.3]{g}, every flat semiring \(S(a_1\cdots a_k)\) lies in \(\mathsf{V}(B_0)\), where \(B_0\) is a \(4\)-element ai-semiring. Thus \(\V_\infty \subseteq \mathsf{V}(B_0)\).
\end{proof}

It is readily verified that every flat semiring \(S(a_1\cdots a_k)\)
(or, equivalently, the interval semiring \(I(C_\infty)\)) satisfies the following identities:
\begin{gather}
x^2+y\approx x^2y\approx yx^2\approx x^2
  \qquad(x^2\approx 0\text{ for short})\label{eqx2free}\\
x+z_1x\approx x+xz_2\approx x+z_1xz_2\approx x^2\label{eqc1}\\
yx+xz\approx x^2\label{eqc2}\\
y_1x+z_1xz_2\approx xy_2+z_1xz_2\approx x^2\label{eqc3}\\
y_1xy_2+z_1xz_2\approx (y_1+z_1)x(y_2+z_2)\label{eqc4}
\end{gather}
Let \(\Delta_\infty\) be the set of identities~\eqref{eq:flat}--\eqref{eqc4}.
One of the main tasks that follows is to show that \(\Delta_{\infty}\)
is an identity basis for \(\mathcal{V}_{\infty}\).
To this end, we first gradually reveal relevant information about the
variety \(\Mod(\Delta_{\infty})\).

\begin{lemma}\label{localfinite}
Every  $n$-generated algebra in $\Mod(\Delta_{\infty})$ is $(n+1)$-nilpotent and finite.
In particular, the variety $\Mod(\Delta_{\infty})$ is locally finite.
\end{lemma}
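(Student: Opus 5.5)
We will prove the statement for the free algebra of $\Mod(\Delta_\infty)$ on $n$ generators $x_1,\dots,x_n$; every $n$-generated member is a homomorphic image of it, so nilpotency and finiteness pass to all $n$-generated algebras. The key claim is: every word $\mathbf w$ over $\{x_1,\dots,x_n\}$ in which some letter occurs at least twice satisfies $\Delta_\infty\vdash \mathbf w\approx 0$ (meaning $\mathbf w\approx x^2$, the absorbing element; note \eqref{eqx2free} forces all squares $x^2$, $y^2$ to coincide, since $x^2\approx x^2+y^2\approx y^2$). Granting this, any word of length $n+1$ over $n$ letters repeats a letter by the pigeonhole principle, so it equals $0$; this is $(n+1)$-nilpotence. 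Moreover, the nonzero words in the free algebra are then among the finitely many words with no repeated letter, i.e.\ linear words of length at most $n$. Every term is a finite sum of words, and summands equal to $0$ make the whole sum $0$. Hence the free algebra has at most $2^{N}+1$ elements, where $N$ is the number of linear words over $n$ letters. Local finiteness follows.

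To prove the claim, write $\mathbf w=\mathbf a\,x\,\mathbf b\,x\,\mathbf c$ with $\mathbf a,\mathbf b,\mathbf c$ possibly empty. If $\mathbf b$ is empty, then $\mathbf w$ contains the factor $x^2$, and \eqref{eqx2free} gives $\mathbf w\approx 0$ directly. Otherwise set $\mathbf t= x\mathbf b$; then $\mathbf w=\mathbf a\,\mathbf t\,x\,\mathbf c$ contains the subword $\mathbf t x$ with $\mathbf t$ starting with $x$. Since $0$ absorbs products by \eqref{eqx2free}, it suffices to show $x\mathbf b x\approx 0$.

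The main obstacle is deriving $x\mathbf bx\approx 0$, since no identity in the list is stated in this form. I would try substituting into \eqref{eqc1}--\eqref{eqc4}. One option is to substitute into \eqref{eqc2} with $x\mapsto x\mathbf b x$, but this gives only sums. The better route is to use \eqref{eqc1}, namely $u+z_1u\approx u^2\approx 0$, with $u=x\mathbf b x$ and $z_1$ chosen so that $z_1u$ is related to $u$ by flatness. Concretely, I would look for a derivation of the form
\[
x\mathbf b x\approx x\mathbf b x+(\text{something equal to }0)\approx 0 .
\]
The sum $x\mathbf b\,x+x\,\mathbf bx$ is trivial, so the natural candidate is \eqref{eqc2} with $y=x\mathbf b$ and the middle variable $x$, which gives $x\mathbf b\cdot x+x\cdot z\approx 0$. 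Then multiply the identity on the right by $\mathbf b x$, or choose $z=\mathbf b x$, to get $x\mathbf bx+x\mathbf bx\approx 0$, that is, $x\mathbf bx\approx 0$ by idempotence. If this direct choice works, and it seems to, since \eqref{eqc2} is $yx+xz\approx x^2$ with $y,z$ arbitrary, the obstacle dissolves. Then $x\mathbf bx\approx x^2\approx 0$, and the claim follows via absorption.
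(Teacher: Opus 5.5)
Your proof is correct and follows the paper's argument. Substituting $y\mapsto x\mathbf b$, $z\mapsto \mathbf b x$ in \eqref{eqc2} is exactly the paper's derivation of $xyx\approx x^2\approx 0$ (it uses $y\mapsto xy$, $z\mapsto yx$), and the pigeonhole-then-count-sums step matches too. The remaining differences are cosmetic: you work in the relatively free algebra rather than an arbitrary $n$-generated one, and your exploratory detours through \eqref{eqc1} and right multiplication by $\mathbf b x$ can be dropped, since the choice $z=\mathbf b x$ already does the job.
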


\begin{proof}
First, taking $y=xy$ and $z=yx$ in~\eqref{eqc2}, together with~\eqref{eqx2free}, we deduce that
\begin{equation}\label{eqxyxx2free}
\Delta_{\infty} \vDash xyx\approx x^2\approx 0.
\end{equation}

Let $S=\langle a_1,\ldots,a_n\rangle$ be an $n$-generated algebra in $\Mod(\Delta_{\infty})$.
Then $S$ satisfies the identities~\eqref{eqxyxx2free}.
Let $H$ be the multiplicative semigroup of $S$ generated by $\{a_1,\ldots,a_n\}$.

Since $H$ satisfies the identity~\eqref{eqxyxx2free}, every nonzero element of $H$ is a product of distinct elements from $\{a_1,\ldots,a_n\}$.
Hence $H$ is $(n+1)$-nilpotent and so it is finite; more precisely,
\[
|H|\leq \sum_{k=1}^{n}\frac{n!}{(n-k)!}+1.
\]
Since $S$ is an ai-semiring, addition is commutative and idempotent.
Therefore every element of $S$ can be written as a sum of distinct elements of $H$, where the order of the summands is irrelevant. Consequently,
\[
|S|\leq 2^{|H|},
\]
which shows that $S$ is finite. Hence $\Mod(\Delta_{\infty})$ is locally finite.
\end{proof}

\begin{lemma}\label{lem:contexts}
Let $S$ be a finite subdirectly member in $\Mod(\Delta_{\infty})$.
Then $S$ is a nilpotent flat semiring a unique annihilator $\omega$ and for every $t\neq0$
there exist a unique pair $(\lambda_t,\rho_t)\in S^1\times S^1 $ such that
\begin{equation}\label{lttrtw}
\lambda_t t \rho_t = \omega.
\end{equation}
\end{lemma}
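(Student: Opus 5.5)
We first place $S$ in the setting of Lemma~\ref{sinilflat}, and then read off the uniqueness of the context from the identities in $\Delta_\infty$.

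\emph{Step 1: $S$ is a nilpotent flat semiring with a unique annihilator.} Since $\Delta_\infty$ contains~\eqref{eq:flat}, we have $\Mod(\Delta_\infty)\subseteq\mathbf F$. By Lemma~\ref{lem:flat}, the subdirectly irreducible algebra $S$ is therefore flat. As $S$ is finite, it is finitely generated, say by $n$ elements. Lemma~\ref{localfinite} then shows that $S$ is $(n+1)$-nilpotent, hence nil. Lemma~\ref{sinilflat} now gives a unique annihilator $\omega$ with $\omega\in S^1tS^1$ for every $t\neq0$. This yields existence of a pair $(\lambda_t,\rho_t)$ with $\lambda_t t\rho_t=\omega$.

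\emph{Step 2: uniqueness, by contradiction.} Suppose $\lambda t\rho=\lambda' t\rho'=\omega$ with $(\lambda,\rho)\neq(\lambda',\rho')$ in $S^1\times S^1$. We split according to which entries are the adjoined identity $1$.

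\begin{itemize}
\item If $\lambda=1=\rho$, then $t=\omega$. The other pair then has some nontrivial entry, say $\lambda'\neq1$. So $t=\lambda' t\rho'$ with $\lambda'\ne1$. Identity~\eqref{eqc1}, $x+z_1xz_2\approx x^2$ (or its one-sided versions), evaluated at $x=t$ gives $t+\lambda' t\rho'=t^2=0$. But the left side equals $t+t=t\ne0$, a contradiction.
\item If $\lambda=1\ne\rho$ and $\rho'=1\ne\lambda'$, then $t\rho=\lambda' t=\omega\ne0$. Identity~\eqref{eqc2} gives $\lambda't+t\rho=t^2=0$. By flatness the left side is $\omega$, a contradiction.
\item Mixed cases in which exactly one pair has a trivial side, such as $(1,\rho)$ versus $(\lambda',\rho')$ with both $\lambda',\rho'\ne1$, are handled by~\eqref{eqc3} in the same way.
\item Cases in which both pairs have the same trivial side reduce to the case of both entries nontrivial, after treating the missing factor via~\eqref{eqc4} with empty variables; if~\eqref{eqc4} is not meant to admit empty variables, we instead cancel directly using $0$-cancellativity.
\end{itemize}

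\emph{Step 3: the main case, all four entries in $S$.} Apply~\eqref{eqc4}:
\[
\omega=\lambda t\rho+\lambda' t\rho'=(\lambda+\lambda')t(\rho+\rho').
\]
Since $(\lambda,\rho)\neq(\lambda',\rho')$, either $\lambda\neq\lambda'$ or $\rho\neq\rho'$. In a flat semiring the corresponding sum is then $0$, so the right side is $0\ne\omega$, a contradiction.

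The same argument covers pairs with a shared empty side: for instance, if $\rho=\rho'=1$ we have $\lambda\ne\lambda'$ and $\lambda t=\lambda' t=\omega\neq0$, so $0$-cancellativity forces $\lambda=\lambda'$.

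\emph{Expected obstacle.} The main difficulty is the careful enumeration of the cases where some entries equal $1\in S^1\setminus S$. One must match each case to the correct identity among \eqref{eqc1}--\eqref{eqc3}, or to $0$-cancellativity of flat semirings, and check that no configuration is left uncovered.
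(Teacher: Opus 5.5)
Your proposal is correct and follows the same route as the paper: flatness, nilpotence and the unique annihilator come from Lemmas~\ref{siflat}, \ref{localfinite} and~\ref{sinilflat}, and the uniqueness of $(\lambda_t,\rho_t)$ is then forced by \eqref{eqc1}--\eqref{eqc4}. Your explicit case split over which entries equal $1$, using $0$-cancellativity when both pairs share an empty side, simply spells out the verification that the paper compresses into the phrase ``by identities \eqref{eqc1}--\eqref{eqc4}''.
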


\begin{proof}
By Lemmas~\ref{siflat}, \ref{sinilflat} and \ref{localfinite}, it follows
that \(S\) is a nilpotent flat semiring with a unique annihilator
\(\omega\), and that for every \(t\neq 0\) there exist
\(\lambda_t,\rho_t\in S^1\) such that
\[
\lambda_t t \rho_t = \omega.
\]
Suppose that \(\lambda_t',\rho_t'\in S^1\) also satisfy
\(\lambda_t' t \rho_t' = \omega\).
By identities~\eqref{eqc1}--\eqref{eqc4}, it follows that
\((\lambda_t,\rho_t)=(\lambda_t',\rho_t')\).
Hence the pair \((\lambda_t,\rho_t)\) is uniquely determined.
\end{proof}

Next, we only need to characterize the finite subdirectly irreducible members of the variety~$\mathcal{V}_{\infty}$.

\begin{proposition}\label{prop:representation}
Every finite subdirectly irreducible member of $\Mod(\Delta_\infty)$ is isomorphic to $I(P)$ for a bounded poset $P$.
Conversely, every such $I(P)$ is a finite subdirectly irreducible and belongs to $\mathcal{V}_{\infty}$.
\end{proposition}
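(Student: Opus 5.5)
The converse direction is the easy half, so I would dispose of it first. Let \(P\) be a finite bounded poset. By Lemma~\ref{lem:icksa1ak}(vi), \(I(P)\) is subdirectly irreducible, and it is clearly finite. By Lemma~\ref{lem:interval}, \(I(P)\) embeds into \(I(C_{|P|-1})\), so it lies in \(\V_{|P|-1}\subseteq\V_\infty\). Since every generator of \(\V_\infty\) satisfies \(\Delta_\infty\), this also places \(I(P)\) in \(\Mod(\Delta_\infty)\).

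For the main direction, let \(S\) be a finite subdirectly irreducible member of \(\Mod(\Delta_\infty)\). By Lemma~\ref{lem:contexts}, \(S\) is a nilpotent flat semiring with a unique annihilator \(\omega\), and each \(t\neq0\) has a unique context \((\lambda_t,\rho_t)\in S^1\times S^1\) with \(\lambda_t t\rho_t=\omega\). The idea is to build the poset from these contexts. Informally, \(\lambda_t\) records ``what lies below \(t\)'' and \(\rho_t\) records ``what lies above \(t\)''.

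I would take as points of \(P\) the set of left parts \(\{\lambda_t\}\) together with the elements \(\lambda_t t\). More symmetrically, I would define \(P\) as the set of \emph{cuts}: pairs \((\lambda,\rho)\in S^1\times S^1\) with \(\lambda\rho=\omega\), where \(\lambda\) or \(\rho\) may be \(1\) and \(\omega\) is treated as \(1\cdot\omega\). These are ordered by \((\lambda,\rho)\le(\lambda',\rho')\) iff \(\lambda'=\lambda u\) and \(\rho=u\rho'\) for some \(u\in S^1\). The candidate isomorphism sends \(t\neq0\) to the interval from the cut \((\lambda_t,t\rho_t)\) to the cut \((\lambda_t t,\rho_t)\), and sends \(0\mapsto 0\). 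The bottom of \(P\) is \((1,\omega)\) and the top is \((\omega,1)\).

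The steps, in order, are:
\begin{enumerate}[(a)]
\item Show that \(\le\) is a partial order. Antisymmetry uses nilpotency: \(u\) and \(u'\) with \(uu'\) acting trivially force \(u=u'=1\). I would use the uniqueness in Lemma~\ref{lem:contexts} to see that the witness \(u\) is unique.
\item Show the map is injective. Since \(t\) is recovered as the unique \(u\) between its endpoints, this follows by uniqueness.
\item Show it is surjective onto strict pairs.
\item Show that \(st\neq0\) exactly when the upper cut of \(s\) equals the lower cut of \(t\). In that case \(\lambda_{st}=\lambda_s\) and \(\rho_{st}=\rho_t\), which holds because \(\lambda_s s t\rho_t=\omega\) together with uniqueness of contexts. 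Conversely, if the cuts match, then \(\lambda_s s t\rho_t=\omega\neq0\) gives \(st\neq0\).
\end{enumerate}
The addition is flat on both sides, so multiplicativity is all that needs checking.

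The main obstacle is step (d) in the case where \(st\neq0\) must force the cuts to agree. This needs \(\lambda_{st}=\lambda_s\), \(\rho_{st}=\rho_t\), and also \(\lambda_t=\lambda_s s\), \(\rho_s=t\rho_t\). The key tool is that for \(st\ne0\) the product \(st\) lies in \(S^1\omega\)-reach, so some \(\lambda(st)\rho=\omega\) exists. That expression is simultaneously a context for \(s\), namely \((\lambda, t\rho)\), and a context for \(t\), namely \((\lambda s,\rho)\). Uniqueness of contexts (Lemma~\ref{lem:contexts}, resting on \eqref{eqc1}--\eqref{eqc4}) then pins everything down.

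Surjectivity in step (c) is the other delicate point: given cuts \(c<c'\), I need the connecting element \(u\) to be nonzero and not equal to \(1\), and I expect this to follow directly from the definition of \(<\). Finally, boundedness of \(P\) comes from the cuts \((1,\omega)\) and \((\omega,1)\).
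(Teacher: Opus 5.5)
Your proposal is correct and is essentially the paper's proof. In any cut \((\lambda,\rho)\), \(\rho\) is determined by \(\lambda\) (it is the paper's \(R(\lambda)\)), so your cuts are the paper's prefix set \(P_S\) with a redundant second coordinate, your order agrees with \(p\le r \iff pt=r\), and \(t\mapsto\bigl((\lambda_t,t\rho_t),(\lambda_t t,\rho_t)\bigr)\) is the paper's \(\eta(t)=(\lambda_t,\lambda_t t)\). Your use of Lemma~\ref{lem:interval} for the converse also makes membership in \(\V_\infty\) more explicit than the paper does.
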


\begin{proof}
Let \(S\) be a finite
subdirectly irreducible member of \(\mathcal{V}_{\infty}\).
Define the prefix set $P_S$ of $S$ by
\[
 P_S = \{p\in S^1\mid (\exists\,q\in S^1)~pq=\omega\}.
\]
It is easy to verify that each $p\in P_S$ has a unique $R(p)\in S^1$ with $pR(p)=\omega$.
In particular, $R(1)=\omega$ and $R(\omega)=1$.

Define a partial order on $P_S$ by $p\leq r$ if $pt=r$ for some $t\in S^1$.
It is easy to verify that $P_S$ is a bounded poset with least element $1$ and greatest element $\omega$.

Define the map $\eta\colon S\to I(P_S)$ by
\[
 \eta(0)=0,\qquad \eta(t)=(\lambda_t,\lambda_t t) \quad(t\ne0).
\]

For $p<r$, choose $t\ne0$ with $pt=r$. Then
$ptR(r)=rR(r)=\omega$, so uniqueness in \eqref{lttrtw} gives $p=\lambda_t$ and $R(r)=\rho_t$.
Thus
\[
\eta(t)=(\lambda_t,\lambda_t t)=(p,pt)=(p,r).
\]
This proves surjectivity.

If $\eta(t)=\eta(t')$, then
\[
(\lambda_t,\lambda_t t)=(\lambda_{t'},\lambda_{t'}t')
\]
and so $\lambda_t t=\lambda_{t'}t'=\lambda_t t\neq0$.
By $0$-cancellativity, $t=t'$. Thus $\eta$ is injective.

Let $t,u\in S\setminus\{0\}$.
A direct computation gives
\[
\eta(tu)=\begin{cases}
  (\lambda_{tu},\lambda_{tu}tu),& tu\neq0,\\
  0,& tu=0,
\end{cases}
\qquad
\eta(t)\eta(u)=\begin{cases}
  (\lambda_t,\lambda_u u),& \lambda_t t=\lambda_u,\\
  0,& \lambda_t t\neq\lambda_u.
\end{cases}
\]
Consider the following two cases.

\textbf{Case 1.} $tu=0$. We claim that $\eta(t)\eta(u)=0$; otherwise, $\lambda_t t=\lambda_u$ and so
\[
\omega=\lambda_u u\rho_u=\lambda_t(tu)\rho_u=\lambda_t 0\rho_u=0,
\]
a contradiction. Thus $\eta(t)\eta(u)=0=\eta(0)=\eta(tu)$.

\textbf{Case 2.} $tu\neq0$. Then
\[
\omega=\lambda_{tu}(tu)\rho_{tu}=\lambda_t t\rho_t=\lambda_u u\rho_u.
\]
By~\eqref{lttrtw}, it follows that
\[
\lambda_{tu}=\lambda_t,\quad u\rho_{tu}=\rho_t,\quad \lambda_{tu}t=\lambda_u,\quad \rho_{tu}=\rho_u.
\]
Thus $\lambda_t t=\lambda_{tu}t=\lambda_u$. Then
\[
\eta(t)\eta(u)=(\lambda_t,\lambda_u u)=(\lambda_{tu},\lambda_{tu}tu)=\eta(tu).
\]
Thus we have shown that
\[
\eta(t)\eta(u)=\eta(tu)
\]
for all $t,u\in S\setminus\{0\}$. Since $\eta(0)=0$, a routine verification shows that $\eta$ preserves all products and the flat addition, so $\eta$ is a semiring homomorphism.
In summary, $\eta$ is a semiring isomorphism, and so $S\cong I(P_S)$. Hence take $P=P_S$.

Conversely, by Lemma~\ref{sinilflat}, every interval semiring \(I(P)\) of a bouned poset $P$ is a finite subdirectly irreducible
member of \(\mathcal{V}_{\infty}\). 
\end{proof}

\begin{remark}
In fact, the proof of Proposition~\ref{prop:representation} can be
further used to show that every subdirectly irreducible member of
\(\V_{\infty}\) is still exactly an interval semiring \(I(P)\) for
some bounded poset \(P\). However, in this case, the poset \(P\)
need not be finite.
\end{remark}

\begin{theorem}\label{thm:vinftyfb}
  The variety $\V_{\infty}$ is finitely based. More precisely,
  \[
  \V_{\infty}=\Mod(\Delta_{\infty}).
  \]
\end{theorem}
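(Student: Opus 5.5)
We prove the two inclusions \(\V_\infty\subseteq\Mod(\Delta_\infty)\) and \(\Mod(\Delta_\infty)\subseteq\V_\infty\) separately.

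For \(\V_\infty\subseteq\Mod(\Delta_\infty)\), it suffices to check that \(I(C_\infty)\) satisfies every identity in \(\Delta_\infty\); this was already recorded as readily verified.

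\emph{Identity \eqref{eq:flat}.} This holds because \(I(C_\infty)\) is flat, by Lemma~\ref{lem:flat}.

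\emph{Identities \eqref{eqx2free}--\eqref{eqc4}.} We use Lemma~\ref{lem:flatuuq}: it is enough to show that any assignment making the left side nonzero forces the right side to take the same value.
\begin{itemize}
\item For \eqref{eqc1}, suppose \(x=(i,j)\) and \(z_1x=(i',j)\) are both nonzero. Then \(i'<i\), so \(x\neq z_1x\) and the sum is \(0=x^2\).
\item \eqref{eqc2} and \eqref{eqc3} are handled the same way, by comparing endpoints.
\item For \eqref{eqc4}, suppose \(y_1xy_2=z_1xz_2\neq0\). Then \(y_1\) and \(z_1\) are both intervals ending at the left endpoint of \(x\) and starting at the same point, so \(y_1=z_1\). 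Likewise \(y_2=z_2\), and the two sides agree.
\end{itemize}
This inclusion is routine.

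For the converse \(\Mod(\Delta_\infty)\subseteq\V_\infty\), we use Lemma~\ref{localfinite}: \(\Mod(\Delta_\infty)\) is locally finite, so it is generated by its finite subdirectly irreducible members. It therefore suffices to show that each such member lies in \(\V_\infty\).

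By Proposition~\ref{prop:representation}, any such member is isomorphic to \(I(P)\) for a finite bounded poset \(P\). By Lemma~\ref{lem:interval}, \(I(P)\) embeds in \(I(C_{|P|-1})\), which lies in \(\V_{|P|-1}\subseteq\V_\infty\). This gives \(\Mod(\Delta_\infty)\subseteq\V_\infty\).

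One small caveat: the proof of Proposition~\ref{prop:representation} is phrased for members of \(\V_\infty\), but it uses only Lemma~\ref{lem:contexts}, which holds throughout \(\Mod(\Delta_\infty)\). I would state explicitly that the representation argument applies verbatim there.

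The main obstacle lies inside Lemma~\ref{lem:contexts}, which Proposition~\ref{prop:representation} relies on: the claim that \eqref{eqc1}--\eqref{eqc4} force uniqueness of the pair \((\lambda_t,\rho_t)\). I would verify it in a flat nilpotent \(S\) as follows. Suppose \(\lambda t\rho=\lambda' t\rho'=\omega\).
\begin{itemize}
\item If \(\lambda\) or \(\lambda'\) is \(1\), or \(\rho\) or \(\rho'\) is \(1\), then one of \eqref{eqc1}, \eqref{eqc3} applies with suitable substitutions. The summed left side is \(\omega+\omega=\omega\neq0\), whereas the right side \(x^2\) evaluates to \(0\) by \eqref{eqx2free}. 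This is a contradiction unless the two pairs coincide in the relevant factor.
\item In the general case, apply \eqref{eqc4} to get \(\omega=(\lambda+\lambda')t(\rho+\rho')\). Since \(\omega\neq0\), flatness forces \(\lambda=\lambda'\) and \(\rho=\rho'\).
\end{itemize}
Once this is secured, the theorem follows from the two inclusions above.
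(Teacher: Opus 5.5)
Your proposal is correct and follows the paper's proof. The inclusion $\V_\infty\subseteq\Mod(\Delta_\infty)$ holds because the generators satisfy $\Delta_\infty$. Conversely, local finiteness reduces everything to finite subdirectly irreducible members, which are of the form $I(P)$ by Proposition~\ref{prop:representation}; its argument does work verbatim in $\Mod(\Delta_\infty)$. Your appeal to Lemma~\ref{lem:interval} supplies the membership $I(P)\in\V_\infty$, which the paper leaves implicit.

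Two small corrections to your check of Lemma~\ref{lem:contexts}. The mixed case $t\rho=\lambda' t=\omega$ is ruled out by \eqref{eqc2}, not by \eqref{eqc1} or \eqref{eqc3}. The case $\lambda=\lambda'=1$ (or $\rho=\rho'=1$) is settled by $0$-cancellativity rather than by a contradiction.
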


\begin{proof}
Since every flat semiring $S(a_1\cdots a_k)$ satisfies every identity in $\Delta_{\infty}$, and since $\V_{\infty}=\mathsf{V}(S(a_1\cdots a_k)\mid k\geq 1)$, it follows that $\V_{\infty}$ is a subvariety of $\Mod(\Delta_{\infty})$.
Conversely, Lemma~\ref{localfinite} tells us that $\Mod(\Delta_{\infty})$ is locally finite, so it can be generated by its finite subdirectly irreducible members. Proposition~\ref{prop:representation} shows that every such member is isomorphic to some interval semiring $I(P)$ and belongs to $\V_{\infty}$. Thus $\V_{\infty}=\Mod(\Delta_{\infty})$, and hence $\V_{\infty}$ is finitely based.
\end{proof}

To settle the finite basis problem for \(\V_k\), we refine
\(\Delta_\infty\) by adding two identities. For each \(k\geq 1\), let
\(\Delta_k\) be \(\Delta_\infty\) together with the following two
identities \(N_k\) and \(U_k\):
\[
N_k\colon x_1x_2\cdots x_{k+1}\approx z^2.
\]
\[
U_k\colon x_1\cdots x_k+y_1\cdots y_k
  \approx(x_1+y_1)\cdots(x_k+y_k).
\]

It is easy to see that \(S(a_1\cdots a_k)\) satisfies the identities
\(U_k\) and \(N_k\). Moreover, by the definition of \(I(P)\), the
following lemma is immediate; we omit the routine verification.

\begin{lemma}\label{lem:height-u}
Let \(P\) be a poset and \(k\geq 1\).
\begin{enumerate}[\((\rm i)\)]
\item \(I(P)\vDash N_k\) if and only if \(\htp(P)\leq k\).
\item \(I(P)\vDash U_k\) if and only if \(P\) has at most one chain
of \(k\) strict steps.
\end{enumerate}
\end{lemma}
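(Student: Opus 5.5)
The plan is to evaluate both identities directly in the flat semiring \(I(P)\) under an arbitrary assignment. Since \(0\) absorbs everything and sums of distinct elements are \(0\), each identity holds exactly when its two sides agree on every assignment of the letters to elements of \(I(P)\). We may assume every letter is sent to a nonzero element \((p,q)\), because otherwise both sides are \(0\).

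For (i), the key observation is that a product \((p_1,q_1)(p_2,q_2)\cdots(p_m,q_m)\) of nonzero elements is nonzero precisely when \(q_i=p_{i+1}\) for all \(i\). In that case the factors form a chain \(p_1<q_1<q_2<\cdots<q_m\) of \(m\) strict steps in \(P\). The right-hand side \(z^2\) of \(N_k\) always evaluates to \(0\) by Lemma~\ref{lem:icksa1ak}(iv). Hence \(I(P)\vDash N_k\) if and only if every product of \(k+1\) elements is \(0\), that is, if and only if \(P\) has no chain with \(k+1\) strict steps. This is exactly \(\htp(P)\leq k\). Equivalently, it is Lemma~\ref{lem:icksa1ak}(iii) with \(k+1\) in place of \(k\).

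For (ii), I will read ``a chain of \(k\) strict steps'' as a chain \(c_0<c_1<\cdots<c_k\) regarded as a path between its endpoints \(c_0\) and \(c_k\). I will show that both sides of \(U_k\) are nonzero only on pairs of such chains with the same endpoints, and that they then agree exactly when the two chains coincide.

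\emph{Assume the uniqueness condition holds.} Take an assignment with \(x_i\mapsto(p_i,q_i)\) and \(y_i\mapsto(p_i',q_i')\). There are three cases.
\begin{enumerate}[(a)]
\item If either product on the left is \(0\), the left side is \(0\). If some factor \(x_i+y_i\) on the right is nonzero, then \(x_i=y_i\) for that \(i\); if all factors are nonzero, then \(x_i=y_i\) for all \(i\), so both products coincide and both sides are equal. Otherwise the right side is \(0\) as well.
\item If both products are nonzero but give different intervals, the left side is \(0\). Then some \(x_i\ne y_i\), so \(x_i+y_i=0\) and the right side is \(0\).
\item If both products are nonzero and equal \((p,q)\), the \(x_i\) and the \(y_i\) describe two \(k\)-step chains from \(p\) to \(q\). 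By hypothesis they coincide, so \(x_i=y_i\) for all \(i\) and both sides equal \((p,q)\).
\end{enumerate}

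\emph{Conversely, suppose the condition fails.} Then there are two distinct \(k\)-step chains \(c_0<\cdots<c_k\) and \(c_0'<\cdots<c_k'\) with \(c_0=c_0'\) and \(c_k=c_k'\). Assign \(x_i=(c_{i-1},c_i)\) and \(y_i=(c_{i-1}',c_i')\). The left side of \(U_k\) is then \((c_0,c_k)\ne0\). Some \(x_i\ne y_i\), so \(x_i+y_i=0\) and the right side is \(0\). Hence \(U_k\) fails.

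The only delicate point is this reading of the uniqueness condition: a failure of \(U_k\) requires two \(k\)-step chains sharing their endpoints. For the bounded posets of height \(k\) to which the lemma is applied, every \(k\)-step chain runs from \(\bot\) to \(\top\), so this coincides with the plain reading ``at most one chain of \(k\) strict steps''. Everything else is routine bookkeeping with the flat operations.
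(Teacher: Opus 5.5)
Your proof is correct, and it is exactly the routine verification by direct evaluation in the flat semiring \(I(P)\) that the paper omits. Your endpoint-sharing reading of (ii) is in fact necessary, not just convenient: read literally for an arbitrary poset, (ii) is false. For instance, \(I(C_k\sqcup C_k)\vDash U_k\) although \(C_k\sqcup C_k\) has two \(k\)-step chains, and \(U_1\) is a trivial identity while \(C_2\) has three \(1\)-step chains. For bounded posets of height at most \(k\), which is the only setting in which the paper uses (ii), the two readings agree, as you observe.
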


Next, we use Lemma~\ref{lem:separation-hsp} to show that every
interval semiring satisfying \(N_k\) and \(U_k\) belongs to the
variety \(\V_k\).

\begin{lemma}\label{lem:ipck}
Let \(k\geq 1\) and let \(P\) be a bounded poset of height at most
\(k\) that contains at most one chain of \(k\) strict steps. Then
\(I(P)\in\V_k\).
\end{lemma}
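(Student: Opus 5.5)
The plan is to apply Lemma~\ref{lem:separation-hsp} with every target poset equal to \(C_k\). This reduces the statement to constructing a family of strictly order preserving maps \(P\to C_k\) that jointly separates \(P\). Then \(I(P)\in\mathsf V(I(C_k))=\V_k\) follows at once, using Lemma~\ref{lem:icksa1ak}(ii) to identify \(I(C_k)\) with \(S(a_1\cdots a_k)\).

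The basic maps are two rank functions. The first is \(r(x)=\ell(\bot,x)\), the rank. The second is the co-rank \(d(x)=k-\ell(x,\top)\). Both take values in \(\{0,\dots,k\}\) because \(\htp(P)\leq k\). Both are strictly order preserving: if \(p<q\), then \(\ell(\bot,q)\geq \ell(\bot,p)+1\) and \(\ell(p,\top)\geq \ell(q,\top)+1\). Moreover \(r(x)\leq d(x)\) always, since \(\ell(\bot,x)+\ell(x,\top)\leq k\). Equality holds exactly when \(x\) lies on a \(k\)-step chain from \(\bot\) to \(\top\). By hypothesis there is at most one such chain, so the elements with \(r=d\) form a single chain (or no elements at all if \(\htp(P)<k\)). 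Hence any given rank contains at most one such element.

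For each \(a\in P\) I will use two mixed maps:
\[
h_a(x)=\begin{cases} r(x), & x\leq a,\\ d(x), & x\not\leq a,\end{cases}
\qquad
g_a(x)=\begin{cases} d(x), & x\geq a,\\ r(x), & x\not\geq a.\end{cases}
\]
To see that \(h_a\) is strictly order preserving, take \(p<q\). If \(p\not\leq a\), then \(q\not\leq a\) and we compare \(d\)-values. If both \(p,q\leq a\), we compare \(r\)-values. In the mixed case \(p\leq a\) and \(q\not\leq a\), we have \(h_a(p)=r(p)<r(q)\leq d(q)=h_a(q)\). The argument for \(g_a\) is dual: the only mixed case is \(p\not\geq a\), \(q\geq a\), and there \(g_a(p)=r(p)<r(q)\leq d(q)=g_a(q)\).

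For separation, take distinct \(x,y\in P\). If \(r(x)\neq r(y)\), the map \(r\) already separates them. Otherwise \(x\) and \(y\) are incomparable, and at most one of them satisfies \(r=d\). Without loss of generality, say \(r(x)<d(x)\). If also \(r(y)<d(y)\), then \(h_x(x)=r(x)\) while \(h_x(y)=d(y)>r(y)=r(x)\), so \(h_x\) separates them. If instead \(r(y)=d(y)\), then \(g_x(x)=d(x)>r(x)=r(y)=d(y)=g_x(y)\), so \(g_x\) separates them.

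Therefore the family \(\{r,d\}\cup\{h_a,g_a\mid a\in P\}\) jointly separates \(P\), and Lemma~\ref{lem:separation-hsp} finishes the proof. I expect the only delicate point to be the last separation case, the one with a rigid element in the same rank. It is exactly here that the uniqueness of the \(k\)-step chain is used, to rule out two distinct rigid elements of equal rank. The height bound is used only so that \(d\) and the mixed maps land in \(C_k\).
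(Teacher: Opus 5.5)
Your proof is correct and follows essentially the same route as the paper. Both arguments reduce via Lemma~\ref{lem:separation-hsp} to strictly order preserving maps $P\to C_k$, take the rank map $r$ as the baseline, and separate two same-rank points with a map that lifts the principal up-set of the nonrigid one. Your $g_x$, built from the co-rank $d(x)=k-\ell(x,\top)$, plays the role of the paper's $f_a(p)=\max\{r(p),\,r(a)+1+\ell(a,p)\}$ for $p\geq a$.

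In fact $g_x$ alone, with $x$ the nonrigid point, already separates $x$ from every $y$ of the same rank, whether or not $y$ is rigid, since $g_x(x)=d(x)>r(x)=r(y)=g_x(y)$. So the maps $h_a$, the map $d$ itself, and your case split are unnecessary.
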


\begin{proof}
Since \(\V_k=\mathsf{V}(I(C_k))\), Lemma~\ref{lem:separation-hsp} shows
that, in order to prove \(I(P)\in \V_k\), it suffices to find a family
of strictly order preserving maps \(P\to C_k\) that jointly separates \(P\).

We begin with the map \(f_0\colon P\to C_k\) defined by \(f_0(p)=r(p)\).
Clearly, \(f_0\) is strictly order preserving; consequently, it separates any two points of
different ranks. Thus it remains only to separate distinct interior
points of the same rank.

To this end, for each nonrigid interior point \(a\), define a map
\(f_a\colon P\to C_k\) by
\[
 f_a(p)=
 \begin{cases}
 \max\{r(p),\, r(a)+1+\ell(a,p)\}, & a\leq p,\\[2mm]
 r(p), & a\not\leq p.
 \end{cases}
\]
We first verify that \(f_a\) is well defined. Since \(r(p)\leq k\) and
\[
r(a)+1+\ell(a,p)\leq r(a)+1+\ell(a,\top)\leq k,
\]
the latter inequality holding because \(a\) is nonrigid, we have
\(f_a(p)\leq k\) for every \(p\in P\). Thus \(f_a\) takes values in
\(C_k\).

Next we show that \(f_a\) is  strictly order preserving. Let \(p,q\in P\) with \(p<q\).
If \(a\not\leq p\), then
\[
f_a(p)=r(p)<r(q)\leq f_a(q).
\]
If \(a\leq p\), then \(a\leq q\), and $r(q)\geq r(p)+1$, $\ell(a,q)\geq \ell(a,p)+1$.
Thus $f_a(p)<f_a(q)$.
This proves that \(f_a\) is strictly order preserving.

Now let \(p\) and \(q\) be distinct points of \(P\) having the same rank.
Because \(P\) contains at most one \(k\)-step chain, at least one of
them, say \(p\), is nonrigid. Points of the same rank are incomparable,
so \(p\not\leq q\). Therefore \(f_p(q)=r(q)\), whereas
\[
f_p(p)=\max\{r(p),\,r(p)+1+\ell(p,p)\}=r(p)+1.
\]
As \(r(p)=r(q)\), it follows that \(f_p(p)\neq f_p(q)\).
Consequently, every pair of distinct points of \(P\) is separated by
some strictly order preserving map $P\to C_k$. Hence \(I(P)\in \V_k\).
\end{proof}

\begin{proposition}\label{prop:bounded-separation}
Let \(k\geq 1\). Then, up to isomorphism, every finite subdirectly
irreducible member of \(\mathcal V_k\) is exactly an interval semiring
\(I(P)\), where \(P\) is a bounded poset of height at most \(k\)
containing at most one chain of length \(k\).
\end{proposition}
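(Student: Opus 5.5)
The statement has two directions, and each reduces quickly to results already in place. The converse is immediate: if \(P\) is a bounded poset of height at most \(k\) with at most one chain of \(k\) strict steps, then \(I(P)\in\V_k\) by Lemma~\ref{lem:ipck}. Moreover, \(I(P)\) is finite and subdirectly irreducible by Lemma~\ref{lem:icksa1ak}(vi).

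For the forward direction, let \(S\) be a finite subdirectly irreducible member of \(\V_k\). The plan has three steps.

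\begin{enumerate}[(1)]
\item Since \(\V_k\subseteq\V_\infty=\Mod(\Delta_\infty)\) by Theorem~\ref{thm:vinftyfb}, Proposition~\ref{prop:representation} gives \(S\cong I(P)\) for some finite bounded poset \(P\).
\item The generator \(I(C_k)\cong S(a_1\cdots a_k)\) satisfies \(N_k\) and \(U_k\), as noted just before Lemma~\ref{lem:height-u}. Hence every member of \(\V_k\) satisfies them, and in particular \(I(P)\vDash N_k\) and \(I(P)\vDash U_k\).
\item Lemma~\ref{lem:height-u}(i) then gives \(\htp(P)\leq k\), and Lemma~\ref{lem:height-u}(ii) gives that \(P\) has at most one chain of \(k\) strict steps.
\end{enumerate}

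This is exactly the described form, since "chain of length \(k\)" in the statement means a chain with \(k\) strict steps, consistent with the definition of height.

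The only point needing care is Lemma~\ref{lem:height-u}(ii), whose verification the paper omitted; the argument rests on it, so it is worth sketching. Suppose \(P\) has two distinct \(k\)-step chains \(p_0<\cdots<p_k\) and \(q_0<\cdots<q_k\). Assign \(x_i\mapsto(p_{i-1},p_i)\) and \(y_i\mapsto(q_{i-1},q_i)\).

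\begin{itemize}
\item The left side of \(U_k\) evaluates to \((p_0,p_k)+(q_0,q_k)\). This is \(0\) if the endpoints differ. If the endpoints agree, it is the nonzero element \((p_0,p_k)\).
\item On the right side, some index has \((p_{i-1},p_i)\neq(q_{i-1},q_i)\), so that factor is \(0\) and the right side is \(0\).
\end{itemize}

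So in the case of equal endpoints the identity fails, which is the case that arises in a bounded poset. In the case of different endpoints, I would instead choose an assignment making one side nonzero by another route, but boundedness makes that case unnecessary here.

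Conversely, if there is at most one \(k\)-step chain, any assignment where both sides of \(U_k\) are nonzero forces both products onto that unique chain, and then the sums agree factorwise. I expect this short check, and being careful about what "chain of \(k\) strict steps" means for nonbounded \(P\), to be the only mild obstacle; the rest is assembling the cited results.
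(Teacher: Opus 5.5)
Your proposal is correct and follows the paper's proof: Proposition~\ref{prop:representation} gives $S\cong I(P)$ with $P$ bounded, then $N_k$, $U_k$ and Lemma~\ref{lem:height-u} give $\htp(P)\leq k$ and at most one $k$-step chain, and Lemma~\ref{lem:ipck} gives the converse. One small correction to your sketch of Lemma~\ref{lem:height-u}(ii): two $k$-step chains with different endpoints are ruled out by boundedness together with $\htp(P)\leq k$, not by boundedness alone. Also, the alternative assignment you mention for that case does not exist in general: $U_k$ actually holds in $I(P)$ when $P$ is two disjoint copies of $C_k$, so the criterion really concerns $k$-step chains that share endpoints.
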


\begin{proof}
By Proposition~\ref{prop:representation}, up to isomorphism, every
finite subdirectly irreducible member of \(\mathcal V_k\) is an interval
semiring \(I(P)\) for a bounded poset \(P\).

Let \(I(P)\) be an interval semiring in \(\mathcal V_k\). Since
\(\mathcal V_k\) satisfies the identities \(N_k\) and \(U_k\), it
follows from Lemma~\ref{lem:height-u} that \(P\) is a bounded poset of
height at most \(k\) containing at most one chain of length \(k\).

Conversely, let \(P\) be such a poset. By Lemma~\ref{lem:ipck}, it
follows that \(I(P)\in\V_k\). This completes the proof.
\end{proof}

\begin{theorem}\label{thm:bases}
For each $k\geq 1$, the variety $\V_k$ is finitely based. More precisely,
$\V_k = \Mod(\Delta_k)$.
\end{theorem}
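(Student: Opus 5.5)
The plan is to mirror the proof of Theorem~\ref{thm:vinftyfb}. For one inclusion, $S(a_1\cdots a_k)\cong I(C_k)$ satisfies every identity of $\Delta_\infty$, as noted before Lemma~\ref{localfinite}. It also satisfies $N_k$ and $U_k$, by Lemma~\ref{lem:height-u}, since $C_k$ has height $k$ and exactly one $k$-step chain. Hence $\V_k\subseteq\Mod(\Delta_k)$.

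For the reverse inclusion, first observe that $\Mod(\Delta_k)\subseteq\Mod(\Delta_\infty)$. By Lemma~\ref{localfinite}, $\Mod(\Delta_k)$ is therefore locally finite, so it is generated by its finite subdirectly irreducible members. Let $S$ be such a member. Since $S\in\Mod(\Delta_\infty)$, Proposition~\ref{prop:representation} gives $S\cong I(P)$ for some finite bounded poset $P$.

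Next, $I(P)$ satisfies $N_k$ and $U_k$. Lemma~\ref{lem:height-u} then shows that $\htp(P)\le k$ and that $P$ has at most one chain of $k$ strict steps. Lemma~\ref{lem:ipck} now gives $I(P)\in\V_k$. Every finite subdirectly irreducible member of $\Mod(\Delta_k)$ thus lies in $\V_k$, and so $\Mod(\Delta_k)\subseteq\V_k$. Since $\Delta_k$ is finite, $\V_k$ is finitely based.

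The main point requiring care is the appeal to Lemma~\ref{lem:height-u}. Part~(i) is a direct check: a $(k+1)$-step chain gives a nonzero product of $k+1$ covering intervals, while $z^2$ always evaluates to $0$. Part~(ii) needs both directions, and I would verify them explicitly.
\begin{itemize}
\item If $P$ has two distinct $k$-step chains, assign the consecutive intervals of each chain to $x_1,\dots,x_k$ and $y_1,\dots,y_k$. The left side of $U_k$ is then $0$, because the two nonzero top elements $(\bot,\top)$ coincide but at least one summand pair $x_i+y_i$ differs. The right side is then $0$ as well. This direction needs some care: both products equal $(\bot,\top)$, so the left side is $(\bot,\top)$, while some $x_i\neq y_i$ makes the right side $0$. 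Hence $U_k$ fails.
\item Conversely, suppose the right side of $U_k$ is nonzero. Then $x_i=y_i$ for all $i$, and both sides agree. If only the left side is nonzero, both products are nonzero and equal. Height at most $k$ forces each product to come from a $k$-step chain, and uniqueness of that chain forces $x_i=y_i$.
\end{itemize}
The case where a variable takes the value $0$ is trivial, since $0$ absorbs both operations.
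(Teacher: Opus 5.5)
Your proof is correct and follows the paper's own argument. You get $\V_k\subseteq\Mod(\Delta_k)$ from the generator; then local finiteness (Lemma~\ref{localfinite}), Proposition~\ref{prop:representation} and Lemma~\ref{lem:height-u} identify the finite subdirectly irreducibles; and Lemma~\ref{lem:ipck}, which is exactly what the paper's appeal to Proposition~\ref{prop:bounded-separation} reduces to, places them in $\V_k$. In your extra check of Lemma~\ref{lem:height-u}(ii), delete the first two sentences of the first bullet, since the left side is $(\bot,\top)$ and not $0$, as you then correct yourself; and add that both $k$-step chains run from $\bot$ to $\top$ because $P$ is bounded of height $k$.
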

\begin{proof}
Since $S(a_1\cdots a_k)$ satisfies every identity in $\Delta_k$,
it follows that the variety $\V_k=\mathsf{V}(S(a_1\cdots a_k))$ is a subvariety of $\Mod(\Delta_k)$.
Conversely, since $\Mod(\Delta_k)$ is a subvariety of $\Mod(\Delta_{\infty})$, it follows from
Lemma~\ref{localfinite} that $\Mod(\Delta_k)$ is locally finite, and so it can be generated by its finite subdirectly irreducible members.
By Proposition~\ref{prop:representation} and Lemma~\ref{lem:height-u},
every such member is isomorphic to an interval semiring $I(P)$ for some bounded poset $P$ with $\htp(P)\leq k$,
and with at most one $k$-step chain.
Finally, by Proposition~\ref{prop:bounded-separation}, such $I(P)$ lies in $\V_k$.
This shows that $\V_k = \Mod(\Delta_k)$, and hence $\V_k$ is finitely based.
\end{proof}

\begin{proposition}
Let $W$ be a nonempty set of linear words. Then $S(W)$ is finitely based.
More precisely, $\mathsf{V}(S(W)) = \V_{k}$, where $k = \sup\{\ell(\bw)\mid \bw\in W\}\leq \infty$.
\end{proposition}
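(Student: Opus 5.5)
We prove the two inclusions \(\mathsf{V}(S(W))\subseteq\V_k\) and \(\V_k\subseteq\mathsf{V}(S(W))\). Finite basedness then follows from Theorem~\ref{thm:bases} when \(k<\infty\) and from Theorem~\ref{thm:vinftyfb} when \(k=\infty\).

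For the inclusion \(\V_k\subseteq\mathsf{V}(S(W))\), take any \(\bw\in W\) of length \(m\), say \(\bw=a_1\cdots a_m\) with distinct letters. Its nonempty subwords, together with \(0\), form a subsemiring of \(S(W)\). Indeed, if \(\bu,\bv\) are subwords of \(\bw\) and \(\bu\bv\in W^{\leq}\), then \(\bu\bv\) is a nonempty subword of some word, and the product in \(S(W)\) is either \(\bu\bv\) or \(0\). Hence the set is closed under multiplication. Under the flat addition, sums of distinct elements are \(0\), so it is also closed under addition. This subsemiring should be isomorphic to \(S(a_1\cdots a_m)\). The point to check is that whenever \(\bu\bv\) is a subword of another word of \(W\), it is already a subword of \(\bw\), so that the multiplications agree. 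But \(\bu\bv\) need not be a subword of \(\bw\) even though \(\bu,\bv\) are. So the safer route is a different one.

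We use the chain-interval representation: the subsemiring generated by the single letters \(a_i\) of \(\bw\) realizes \(I(C_m)\). Multiplying consecutive letters gives nonzero subwords of \(\bw\), and the products that vanish in \(I(C_m)\) may fail to vanish in \(S(W)\). To handle this, we pass to the ideal quotient by the ideal of elements that are not subwords of \(\bw\); these form an order filter because every nonzero element is maximal only with respect to \(0\). The result is that \(I(C_m)\cong S(a_1\cdots a_m)\) lies in \(\mathsf{H}\mathsf{S}(S(W))\). Since \(\V_j\subseteq\V_m\) for \(j\leq m\) (as \(C_j\) is a subposet of \(C_m\), by Lemma~\ref{lem:icksa1ak}(i)), taking the supremum over \(\bw\in W\) gives \(\V_k\subseteq\mathsf{V}(S(W))\).

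For the reverse inclusion, every word in \(W\) has length at most \(k\). If \(k<\infty\), it suffices to verify that \(S(W)\) satisfies \(\Delta_k\) and then apply Theorem~\ref{thm:bases}.
\begin{itemize}
\item \(S(W)\) lies in \(\V_\infty=\Mod(\Delta_\infty)\). The cleanest way to see this is to show that \(S(W)\) is (isomorphic to) \(I(P)\) for a poset \(P\) with isolated points removed, and then apply Lemma~\ref{lem:interval} or a direct check of \eqref{eq:flat}--\eqref{eqc4} using linearity. Concretely, take \(P\) to be the set of "cut positions" of the words in \(W\), glued along common subwords: the element \(\bu\in W^{\leq}\) becomes the interval \((\text{prefix},\text{prefix}\,\bu)\) in the prefix order on the set \(\{\bt \mid \bt\bu\in W^{\leq}\}\). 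Linearity, meaning that each letter occurs at most once in each word, is what should make \(\bu\bv\ne0\) equivalent to concatenability of intervals.
\item \(N_k\) holds, since any nonzero product of \(k+1\) elements is a word of length at least \(k+1\), which cannot lie in \(W^{\leq}\).
\item \(U_k\) holds. For \(U_k\), a nonzero value of \(x_1\cdots x_k\) forces every \(x_i\) to be a single letter and the product to be a word of length \(k\). We then use Lemma~\ref{lem:flatuuq} to compare the two sides.
\end{itemize}

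I expect \(U_k\) to be the main obstacle. If \(W\) contains two distinct words of length \(k\), both sides of \(U_k\) could be nonzero and different, and then \(U_k\) fails in \(S(W)\). Note that the nonzero values of \(x_1\cdots x_k\) and \(y_1\cdots y_k\) already differ in this case. So \(U_k\) fails whenever \(W\) has two length-\(k\) words, and the inclusion \(\mathsf{V}(S(W))\subseteq\V_k\) would then need \(\V_k\) to contain algebras not satisfying \(U_k\), which is impossible. I would therefore first test this obstruction carefully. I expect the statement as written to require \(W\) to have a unique longest word (or, alternatively, the right-hand variety to be \(\B_k\)-like), and I would resolve this before attempting the proof.

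If \(k=\infty\), the inclusion \(\mathsf{V}(S(W))\subseteq\V_\infty\) follows from the first step via Theorem~\ref{thm:vinftyfb}.
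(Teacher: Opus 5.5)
Your first inclusion is fine. Here $J_{\bw}=S(W)\setminus\bw^{\leq}$ (which contains $0$) is an ideal, since a nonzero product with a factor that is not a subword of $\bw$ is not itself a subword of $\bw$. Moreover $S(W)/J_{\bw}\cong S(\bw)\cong I(C_{\ell(\bw)})$. You can take this quotient of $S(W)$ directly, without first passing to the subsemiring generated by the letters.

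The reverse inclusion is where the proposal fails, and the obstruction you raise does not exist. $U_k$ does hold in $S(W)$ whenever all words of $W$ are linear of length at most $k$. Let $\varphi$ be an assignment.

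If $\varphi(x_i)\neq\varphi(y_i)$ for some $i$, the right-hand side is $0$. The left-hand side is also $0$. A nonzero value of $x_1\cdots x_k$ (resp.\ $y_1\cdots y_k$) is the length-$k$ word whose letters are the $\varphi(x_i)$ (resp.\ the $\varphi(y_i)$). So two nonzero values are distinct elements, and their flat sum is $0$.

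If $\varphi(x_i)=\varphi(y_i)$ for all $i$, both sides equal $\varphi(x_1\cdots x_k)$. In particular, ``both sides nonzero and different'' can never happen.

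You have transplanted the failure of $U_k$ in $I(E_k)$ to $S(W)$. In $I(E_k)$ the two $k$-step chains evaluate to the \emph{same} element $(\bot,\top)$; in $S(W)$, distinct words of length $k$ are distinct elements. For example, $S(ab,cd)$ is a subdirect product of two copies of $S(ab)$ and lies in $\V_2$. So the statement is correct as written, and your proposal ends by doubting it rather than proving it.

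Your suggested route to $\Delta_\infty$ is also unsound, because $S(W)$ is in general not an interval semiring. For $W=\{abc,dbe\}$ we have $a\cdot b\neq0$ and $b\cdot e\neq0$ but $(ab)\cdot e=0$. In any $I(P)$, by contrast, $xy\neq0\neq yz$ forces $xyz\neq0$.

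The missing idea is to reuse your ideals $J_{\bw}$ for the reverse inclusion. Every nonzero element of $S(W)$ lies in $\bw^{\leq}$ for some $\bw\in W$. Hence the Rees congruences of the $J_{\bw}$ ($\bw\in W$) intersect to the identity, and $S(W)$ is a subdirect product of the semirings $S(\bw)\cong I(C_{\ell(\bw)})$. This is exactly the reduction $\mathsf{V}(S(W))=\mathsf{V}(S(\bw)\mid\bw\in W)$ that the paper imports from the literature. Together with $\V_j\subseteq\V_k$ for $j\leq k$, it gives $\mathsf{V}(S(W))\subseteq\V_k$ for finite and infinite $k$ alike, without verifying $\Delta_k$ at all.

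A direct check of \eqref{eq:flat}--\eqref{eqc4} would also work, using that a nonempty subword occurs at most once in a linear word. You did not carry that check out.
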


\begin{proof}
By \cite[Proposition 3.12]{grz}, we have
\[
\mathsf{V}(S(W)) = \mathsf{V}(S(\mathbf w)\mid \mathbf w\in W).
\]

On the other hand, for linear words \(\mathbf u\) and \(\mathbf v\),
it is easy to verify that \(S(\mathbf u)\in \mathsf{V}(S(\mathbf v))\)
if and only if \(\ell(\mathbf u)\leq \ell(\mathbf v)\).
We now distinguish whether \(k\) is finite.

If \(k<\infty\), then it is easy to verify that
\[
\mathsf{V}(S(W)) = \mathsf{V}(S(a_1\cdots a_k)) = \V_k.
\]

If \(k=\infty\), then \(S(a_1\cdots a_n)\in \mathsf{V}(S(W))\) for all
\(n\geq 1\). Therefore
\[
\mathsf{V}(S(W)) = \V_{\infty}.
\]

Finally, by Theorems~\ref{thm:vinftyfb} and~\ref{thm:bases}, the
conclusion follows.
\end{proof}

\begin{remark}
Compared with the commutative case, $S_c(a_1\cdots a_k)$ is finitely based if and only if $k\leq 2$.
Moreover, the variety generated by all $S_c(a_1\cdots a_k)$ is also nonfinitely based; see~\cite{gjrz}.
In particular, for every positive integer $k\geq 3$, there exists an exactly $k$-nilpotent flat semiring that is finitely based, and there also exists an exactly $k$-nilpotent flat semiring that is nonfinitely based.
\end{remark}

\section{Bounded-height varieties}\label{sec:bounded}

For $k\geq1$, let  $\B_k$ be the subvariety of $\V_{\infty}$ defined by the identity $N_{k}$; that is
\[
 \B_k=\V_{\infty}\cap\Mod(x_1\cdots x_{k+1}\approx x^2).
\]

\begin{lemma}\label{lem:sibk}
The variety \(\mathcal B_k\) is generated by all interval semirings of
bounded posets of height at most \(k\). Moreover, it is generated by
all interval semirings of bounded posets of height exactly \(k\).
\end{lemma}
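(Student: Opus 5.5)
The plan is to use local finiteness together with the description of the finite subdirectly irreducible members. Since $\B_k\subseteq\V_\infty=\Mod(\Delta_\infty)$, Lemma~\ref{localfinite} shows that $\B_k$ is locally finite. Hence $\B_k$ is generated by its finite subdirectly irreducible members. By Proposition~\ref{prop:representation}, each such member is isomorphic to $I(P)$ for some bounded poset $P$. Because $\B_k\vDash N_k$, Lemma~\ref{lem:height-u}(i) gives $\htp(P)\leq k$.

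Conversely, let $P$ be a bounded poset of height at most $k$. Proposition~\ref{prop:representation} places $I(P)$ in $\V_\infty$, and Lemma~\ref{lem:height-u}(i) shows that $I(P)\vDash N_k$, so $I(P)\in\B_k$. Combining the two directions, $\B_k$ is generated by all such $I(P)$, which is the first assertion.

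For the second assertion, it suffices to show that every $I(P)$ with $P$ bounded and $\htp(P)=h<k$ lies in the variety generated by the interval semirings of bounded posets of height exactly $k$. The cleanest route is an embedding. Form $P'$ from $P$ by adjoining a new chain $\bot<c_1<\cdots<c_{k-1}<\top$ through fresh interior elements $c_1,\dots,c_{k-1}$. Then $P'$ is bounded with $\htp(P')=k$: any chain of $P'$ either stays in $P$, which has height $h\le k$, or uses only the new elements together with $\bot$ and $\top$, which has length at most $k$. The inclusion $P\to P'$ is a strictly order preserving injection, and $P$ is a subposet of $P'$ (indeed an induced one). By Lemma~\ref{lem:icksa1ak}(i), or equivalently by Lemma~\ref{lem:separation-hsp} applied to this single map, $I(P)$ is a subsemiring of $I(P')$.

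The only point needing care is that $P'$ is still a finite poset with no isolated elements, so that it fits the paper's standing conventions. This is immediate from the construction. The case $k=1$ is trivial, since then the only bounded poset of height at most $1$ is $C_1$.
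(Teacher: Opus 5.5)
Your proposal is correct and follows essentially the same argument as the paper. For the first part you combine local finiteness of $\B_k$ with Proposition~\ref{prop:representation} and Lemma~\ref{lem:height-u}(i); for the second you embed a lower-height bounded poset as an induced subposet of a bounded poset of height exactly $k$ and apply Lemma~\ref{lem:icksa1ak}(i). The only difference is an inessential choice of padding: you adjoin a parallel $k$-step chain between $\bot$ and $\top$, whereas the paper adds points below $\bot$.
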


\begin{proof}
By Lemma~\ref{localfinite}, the variety \(\mathcal B_k\) is locally finite,
so it is generated by its finite subdirectly irreducible members.
Furthermore, by Proposition~\ref{prop:representation} and
Lemma~\ref{lem:height-u}(i), every such member is precisely an interval
semiring of a bounded poset of height at most \(k\).

On the other hand, every bounded poset of height less than \(k\) can be
embedded naturally into a bounded poset of height \(k\), for instance by
adding suitable points at the bottom. Therefore, the desired conclusion
follows.
\end{proof}

In fact, \(\B_k\) can be generated by a single interval semiring of a
bounded poset of height \(k\), constructed as follows. For each \(k\geq 2\),
define \(Q_k\) as the ordinal sum
\[
 \{\bot\}\ \oplus\ \{a_1,b_1\}\ \oplus\ \{a_2,b_2\}\oplus\ \cdots\ \oplus
 \{a_{k-1},b_{k-1}\}\ \oplus\ \{\top\},
\]
where each two-point set is an antichain, so that the strict order of
\(Q_k\) is exactly
\[
 \bot<a_1,b_1<a_2,b_2<\cdots<a_{k-1},b_{k-1}<\top.
\]
For example, the Hasse diagram of \(Q_4\) is given in Figure~\ref{fig:Q4}.
\begin{figure}[htbp]
\centering
\begin{tikzpicture}[
  scale=0.8,
  elem/.style={circle, fill=black, inner sep=1.6pt},
  lab/.style={font=\small}
]

\node[elem] (bot) at (0,0) {};
\node[elem] (a1)  at (-1,0.9) {};
\node[elem] (b1)  at ( 1,0.9) {};
\node[elem] (a2)  at (-1,1.8) {};
\node[elem] (b2)  at ( 1,1.8) {};
\node[elem] (a3)  at (-1,2.7) {};
\node[elem] (b3)  at ( 1,2.7) {};
\node[elem] (top) at (0,3.6) {};

\node[lab] at (0,-0.35)   {\(\bot\)};
\node[lab] at (-1.35,0.9) {\(a_1\)};
\node[lab] at ( 1.35,0.9) {\(b_1\)};
\node[lab] at (-1.35,1.8) {\(a_2\)};
\node[lab] at ( 1.35,1.8) {\(b_2\)};
\node[lab] at (-1.35,2.7) {\(a_3\)};
\node[lab] at ( 1.35,2.7) {\(b_3\)};
\node[lab] at (0,3.95)    {\(\top\)};

\draw[thick] (bot) -- (a1);
\draw[thick] (bot) -- (b1);

\draw[thick] (a1) -- (a2);
\draw[thick] (a1) -- (b2);
\draw[thick] (b1) -- (a2);
\draw[thick] (b1) -- (b2);

\draw[thick] (a2) -- (a3);
\draw[thick] (a2) -- (b3);
\draw[thick] (b2) -- (a3);
\draw[thick] (b2) -- (b3);

\draw[thick] (a3) -- (top);
\draw[thick] (b3) -- (top);

\end{tikzpicture}
\caption{The poset \(Q_4\).}
\label{fig:Q4}
\end{figure}

\begin{proposition}\label{prop:finite}
For every \(k\geq1\), the variety \(\B_k\) is finitely generated.
More precisely, \(\B_k=\mathsf{V}(I(Q_k))\), where
\(|I(Q_k)|=2k^2-2k+2\).
\end{proposition}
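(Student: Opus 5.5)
Two things need checking: the cardinality of \(I(Q_k)\), and the equality \(\B_k=\mathsf{V}(I(Q_k))\). The inclusion \(\mathsf{V}(I(Q_k))\subseteq\B_k\) is immediate: \(Q_k\) is a bounded poset of height \(k\), so \(I(Q_k)\in\V_\infty\) by Proposition~\ref{prop:representation}, and \(I(Q_k)\vDash N_k\) by Lemma~\ref{lem:height-u}(i). For the count, I would partition the comparable pairs by type:
\begin{itemize}
\item pairs \((\bot,\top)\): one;
\item pairs \((\bot,x)\) or \((x,\top)\) with \(x\) interior: \(2(2(k-1))\);
\item pairs \((x,y)\) with \(x\) at level \(i\), \(y\) at level \(j\), \(1\le i<j\le k-1\): four for each such \((i,j)\), giving \(4\binom{k-1}{2}\).
\end{itemize}
Adding the zero gives
\[
2+4(k-1)+2(k-1)(k-2)=2k^2-2k+2.
\]
The degenerate case \(k=1\), where \(Q_1=C_1\), is checked directly.

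For the reverse inclusion, Lemma~\ref{lem:sibk} reduces the problem to showing \(I(P)\in\mathsf{V}(I(Q_k))\) for every bounded poset \(P\) of height at most \(k\). By Lemma~\ref{lem:separation-hsp}, it suffices to find a family of strictly order preserving maps \(P\to Q_k\) that jointly separates \(P\). Each map will be built from the rank function: set \(f(\bot)=\bot\) and \(f(\top)=\top\), and send an interior point \(p\) of rank \(r(p)=i\) to one of \(a_i,b_i\), chosen by a function \(\chi\colon P\setminus\{\bot,\top\}\to\{a,b\}\).

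Any such \(f_\chi\) is strictly order preserving. If \(p<q\) are interior, then \(r(p)<r(q)\), and every element of level \(i\) in \(Q_k\) lies below every element of level \(j>i\). Comparabilities involving \(\bot\) or \(\top\) are clear, since \(1\le r(p)\le k-1\) for interior \(p\).

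For separation, points of different ranks are already separated by every \(f_\chi\). For two distinct interior points \(p,q\) of the same rank, choose \(\chi\) with \(\chi(p)=a\) and \(\chi(q)=b\). The family \(\{f_\chi\}\) of all such maps is therefore finite and jointly separating, so \(I(P)\in\mathsf{V}(I(Q_k))\), and hence \(\B_k=\mathsf{V}(I(Q_k))\).

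I expect no real obstacle here. The only delicate point is the well-definedness of the ranks, which holds because \(P\) is bounded of height at most \(k\) and so every interior rank lies in \(\{1,\dots,k-1\}\).
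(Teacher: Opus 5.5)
Your proof is correct and takes essentially the paper's route: you use the rank map \(p\mapsto a_{r(p)}\) into \(Q_k\), re-route one of two equal-rank points to the \(b\)-side to separate them, and finish with Lemma~\ref{lem:separation-hsp} and Lemma~\ref{lem:sibk}; your family of all \(f_\chi\) simply contains the paper's maps \(f_a\). You also verify the count \(|I(Q_k)|=2k^2-2k+2\), which the paper states without proof, and your computation is right.
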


\begin{proof}
Since \(Q_k\) is a bounded poset of height \(k\), it follows from
Lemma~\ref{lem:sibk} that \(I(Q_k)\in \B_k\).

Conversely, by Lemmas~\ref{lem:separation-hsp} and~\ref{lem:sibk}, it
suffices to verify that every finite bounded poset \(P\) of height \(k\)
admits a family of strictly order preserving maps \(P\to Q_k\) that jointly separate \(P\).

Define a strictly order preserving map \(f\colon P\to Q_k\) by fixing \(\bot\) and \(\top\),
and setting
\[
f(p)=a_{\ell(\bot,p)}
\]
for every interior point \(p\). The map \(f\) separates the bounds from
the interior points, and it separates interior points of different ranks.

Now let \(a\) be an interior point that has another interior point of the
same rank. Alter only its image under \(f\) to
\(b_{\ell(\bot,a)}\), obtaining another strictly order preserving map \(f_a\). Indeed,
comparisons always involve points of different levels, so strict order preservation is
preserved. The baseline map \(f\) separates different ranks and the
bounds. For distinct points \(a,b\) of equal rank, we have
\[
f_a(a)=b_{\ell(\bot,a)}
\neq a_{\ell(\bot,a)}
=f_a(b),
\]
so \(f_a\) separates \(a\) and \(b\). This completes the proof.
\end{proof}

\begin{proposition}\label{prop:proper}
Every proper subvariety $\mathcal W$ of $\V_{\infty}$ is contained in some
$\B_k$ with $k\geq1$, and every $\B_k$ is proper. Consequently,
\[
 \Lat(\V_{\infty})\setminus\{\V_{\infty}\}
   =\bigcup_{h\geq1}\Lat(\B_k).
\]
\end{proposition}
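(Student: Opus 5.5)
The argument has two halves: every $\B_k$ is proper, and every proper subvariety lies in some $\B_k$. The displayed equality of lattices then follows at once, since each $\B_k$ is itself a proper subvariety of $\V_\infty$.

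\textbf{Properness of $\B_k$.} This is immediate. The chain $C_{k+1}$ has height $k+1$, so by Lemma~\ref{lem:height-u}(i) the semiring $I(C_{k+1})$ fails $N_k$. Hence $I(C_{k+1})\in\V_\infty\setminus\B_k$.

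\textbf{Proper subvarieties lie in some $\B_k$.} Let $\W\subsetneq\V_\infty$. I will argue by contraposition: assume $\W\not\subseteq\B_k$ for every $k$, and show $\W=\V_\infty$. Because $\V_\infty=\mathsf V(I(C_k)\mid k\geq1)$, it is enough to show that $I(C_k)\in\W$ for every $k$. I carry this out in four steps.

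\begin{enumerate}[(1)]
\item Fix $k$. Since $\W\not\subseteq\B_k$ and $\W$ is locally finite (Lemma~\ref{localfinite}), $\W$ is generated by its finite subdirectly irreducible members. Not all of these can satisfy $N_k$, for otherwise $\W$ would satisfy $N_k$ and so lie in $\B_k$.
\item So some finite subdirectly irreducible $S\in\W$ fails $N_k$. By Proposition~\ref{prop:representation}, $S\cong I(P)$ for a bounded poset $P$.
\item By Lemma~\ref{lem:height-u}(i), failure of $N_k$ means $\htp(P)\geq k+1>k$. Hence $P$ contains a chain with $k$ strict steps, say $p_0<p_1<\cdots<p_k$.
\item This chain, with the order it inherits from $P$, is an induced subposet of $P$ isomorphic to $C_k$, hence a subposet. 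By Lemma~\ref{lem:icksa1ak}(i), $I(C_k)$ is a subsemiring of $I(P)$. Therefore $I(C_k)\in\W$.
\end{enumerate}

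As $k$ was arbitrary, $\W\supseteq\V_\infty$, which contradicts properness.

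\textbf{Main obstacle.} The only delicate point is step (1): passing from "$\W\not\subseteq\B_k$" to a single subdirectly irreducible member that fails $N_k$. Local finiteness handles this cleanly, because $\W$ is generated by its finite subdirectly irreducibles, and if they all satisfied $N_k$, so would $\W$.

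Alternatively, one can avoid subdirectly irreducibles entirely. If $\W\not\subseteq\B_k$, then $\W\not\models N_k$. By Lemma~\ref{lem:icksa1ak}(iii), $\B_k\cap\mathsf{V}(I(C_{k+1}))$ is a proper part of $\mathsf{V}(I(C_{k+1}))$. Some member of $\W$ witnesses a nonzero product of length $k+1$, and taking the generated subalgebra and an ideal quotient would produce $I(C_{k})$. I would present the subdirectly irreducible argument as the primary one.
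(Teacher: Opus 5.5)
Your proof is correct and is essentially the paper's argument. The paper shows that if $\W$ contained interval semirings $I(P)$ of unbounded height, it would contain every $I(C_k)$ by Lemma~\ref{lem:icksa1ak}(i), and otherwise $\W$ satisfies some $N_k$; you additionally make explicit the step the paper leaves implicit, namely that $\W$ is generated by its finite subdirectly irreducible members, each an $I(P)$ by Proposition~\ref{prop:representation}, and your ``alternative'' route is only a vague sketch that is not needed.
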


\begin{proof}
For each \(k\geq 1\), since \(I(C_{k+1})\in \V_{\infty}\setminus \B_{k}$,
it follows that every $\B_k$ is a proper subvariety of $\V_{\infty}$.

Next, suppose that $\W$ is any proper subvariety of $\V_{\infty}$.
If for every positive integer $k$ there exists an interval semiring
$I(P)\in \W$ with $\htp(P)\geq k$, then $I(C_k)$ is a subalgebra
of $I(P)$, which shows that $I(C_k)\in \W$ for every $k\geq 1$.
Hence $\W = \V_{\infty}$, a contradiction.

Therefore there exists a positive integer $k$ such that every
interval semiring $I(P)$ in $\W$ has $\htp(P)\leq k$.
Combining this with Lemma~\ref{lem:icksa1ak}(iii), we obtain that
$I(P)$ is $(k+1)$-nilpotent. Thus $\W$ satisfies the identity
$N_k$, and so $\W$ is a subvariety of $\B_k$. In particular,
\[
 \Lat(\V_{\infty})\setminus\{\V_{\infty}\}
   =\bigcup_{k\geq1}\Lat(\B_k).
\]
This completes the proof.
\end{proof}

\begin{proposition}\label{prop:sandwich}
For every \(k\geq 2\), the variety \(\mathcal V_k\) is a proper
subvariety of \(\mathcal B_k\), and \(\mathcal B_k\) is a maximal
subvariety of \(\mathcal V_{k+1}\). In particular, \(\V_{\infty}\)
admits the following infinite strictly increasing chain of subvarieties:
\[
\V_2<\B_2<\V_3<\B_3<\cdots<\V_{\infty}.
\]
\end{proposition}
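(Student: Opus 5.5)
Three things need proof: \(\V_k\subseteq\B_k\) with proper inclusion, \(\B_k\subseteq\V_{k+1}\), and maximality of \(\B_k\) in \(\V_{k+1}\). The chain then follows, since \(\B_k<\V_{k+1}\) is strict because \(I(C_{k+1})\in\V_{k+1}\setminus\B_k\).

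\emph{The inclusions.} Since \(\Delta_k\) contains \(N_k\) and \(\Delta_\infty\), we get \(\V_k\subseteq\B_k\). For \(\B_k\subseteq\V_{k+1}\), Lemma~\ref{lem:sibk} reduces us to interval semirings \(I(P)\) with \(P\) bounded of height at most \(k\). Such a \(P\) has height at most \(k+1\) and contains no chain of \(k+1\) strict steps. So Lemma~\ref{lem:ipck} (with \(k+1\) in place of \(k\)) gives \(I(P)\in\V_{k+1}\).

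\emph{Properness of \(\V_k<\B_k\).} For \(k\geq2\), the poset \(Q_k\) has height \(k\) and contains more than one \(k\)-step chain, e.g.\ \(\bot<a_1<\cdots<a_{k-1}<\top\) and \(\bot<b_1<\cdots<\top\). Hence \(I(Q_k)\not\vDash U_k\) by Lemma~\ref{lem:height-u}(ii), while \(I(Q_k)\in\B_k\) by Proposition~\ref{prop:finite}. Thus \(I(Q_k)\in\B_k\setminus\V_k\). (For \(k=1\), both varieties are generated by \(I(C_1)\), which is why \(k\geq2\) is required.)

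\emph{Maximality, the main step.} Let \(\B_k\subseteq\W\subseteq\V_{k+1}\) with \(\W\neq\B_k\); we show \(\W=\V_{k+1}\). Since \(\W\) is locally finite (Lemma~\ref{localfinite}), it is generated by its finite subdirectly irreducible members. By Proposition~\ref{prop:bounded-separation}, these are \(I(P)\) with \(P\) bounded, of height at most \(k+1\), and with at most one \((k+1)\)-step chain. If every such \(P\) had height at most \(k\), then \(\W\subseteq\B_k\) and so \(\W=\B_k\), a contradiction. Hence some \(I(P)\in\W\) has \(\htp(P)=k+1\). Such a \(P\) contains a \((k+1)\)-step chain, which is a subposet isomorphic to \(C_{k+1}\). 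By Lemma~\ref{lem:icksa1ak}(i), \(I(C_{k+1})\) is then a subsemiring of \(I(P)\), so \(\V_{k+1}\subseteq\W\).

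The only point needing care is this extraction of \(I(C_{k+1})\) from a height-\((k+1)\) generator. It is immediate from the subposet lemma, so the argument contains no genuinely hard obstacle.
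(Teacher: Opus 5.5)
Your proof is correct and follows essentially the same route as the paper. You show properness of $\V_k<\B_k$ via a height-$k$ bounded poset with two $k$-step chains failing $U_k$ (you take $Q_k$ concretely). You get $\B_k\subseteq\V_{k+1}$ from Lemma~\ref{lem:sibk} together with the membership criterion for $\V_{k+1}$. For maximality, any subvariety of $\V_{k+1}$ not inside $\B_k$ has a height-$(k+1)$ subdirectly irreducible generator containing $I(C_{k+1})$; here you spell out, via local finiteness and Lemma~\ref{lem:icksa1ak}(i), what the paper compresses into a single appeal to Proposition~\ref{prop:bounded-separation}.
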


\begin{proof}
Since \(\htp(I(C_k))=k\), Lemma~\ref{lem:sibk} implies that
\(I(C_k)\in\mathcal B_k\). Hence
\(\mathsf V(I(C_k))\) is a subvariety of \(\mathcal B_k\).

Now let \(P\) be a poset of height \(k\) containing two chains of length
\(k\). Then by Lemma~\ref{lem:sibk} again, \(I(P)\in\mathcal B_k\).
By Lemma~\ref{lem:height-u}(ii),
\(I(P)\) does not satisfy the identity \(U_k\). Therefore
\(\mathcal B_k\) does not satisfy \(U_k\), whereas it is easy to verify
that \(I(C_k)\) satisfies \(U_k\). This shows that \(\mathcal V_k\) is a
proper subvariety of \(\mathcal B_k\).

Conversely, by Lemma~\ref{lem:sibk}, \(\mathcal B_k\) is generated by
all interval semirings of bounded posets of height at most \(k\), and by
Proposition~\ref{prop:bounded-separation} every such semiring lies in
\(\mathcal V_{k+1}\). Hence \(\mathcal B_k\) is a subvariety of
\(\mathcal V_{k+1}\). Using \(I(C_{k+1})\), one sees that this subvariety is
proper.

Finally, by Proposition~\ref{prop:bounded-separation} again, a
subvariety \(\mathcal W\) of \(\mathcal V_{k+1}\) is proper if and only
if \(\mathcal W\) contains no interval semiring induced by a bounded
poset \(P\) of height at least \(k+1\). Hence \(\mathcal B_k\) is a
maximal subvariety of \(\mathcal V_{k+1}\).
\end{proof}

\section{The Cross variety $\B_3$}\label{sec:crossb3}
Consider the following seven bounded posets of height at most three, whose Hasse diagrams are collected in Figure~\ref{fig:seven-posets}.
We shall prove that every bounded poset \(P\) of height at most three can be handled by means of these seven bounded posets and suitable strictly order preserving maps that jointly separate \(P\).

\begin{figure}[htbp]
\centering
\begin{tikzpicture}[
  scale=0.8,
  elem/.style={circle, fill=black, inner sep=1.5pt},
  lab/.style={font=\small},
  edge/.style={thick}
]

\begin{scope}[xshift=0cm]
  \node[elem] (c1b) at (0,0) {};
  \node[elem] (c1t) at (0,2.4) {};
  \node[lab] at (0,-0.35) {\(\bot\)};
  \node[lab] at (0,2.75)  {\(\top\)};
  \draw[edge] (c1b) -- (c1t);
  \node[lab] at (0,-1.0) {\(C_1\)};
\end{scope}

\begin{scope}[xshift=2.8cm]
  \node[elem] (c2b) at (0,0) {};
  \node[elem] (c2a) at (0,1.2) {};
  \node[elem] (c2t) at (0,2.4) {};
  \node[lab] at (0,-0.35) {\(\bot\)};
  \node[lab] at (0.3,1.2) {\(a\)};
  \node[lab] at (0,2.75)  {\(\top\)};
  \draw[edge] (c2b) -- (c2a);
  \draw[edge] (c2a) -- (c2t);
  \node[lab] at (0,-1.0) {\(C_2\)};
\end{scope}

\begin{scope}[xshift=5.6cm]
  \node[elem] (v2b)  at (0,0) {};
  \node[elem] (v2a)  at (-0.8,1.2) {};
  \node[elem] (v2ap) at (0.8,1.2) {};
  \node[elem] (v2t)  at (0,2.4) {};
  \node[lab] at (0,-0.35) {\(\bot\)};
  \node[lab] at (-1.25,1.2) {\(a\)};
  \node[lab] at (1.25,1.2)  {\(a'\)};
  \node[lab] at (0,2.75)    {\(\top\)};
  \draw[edge] (v2b) -- (v2a);
  \draw[edge] (v2b) -- (v2ap);
  \draw[edge] (v2a) -- (v2t);
  \draw[edge] (v2ap) -- (v2t);
  \node[lab] at (0,-1.0) {\(E_2\)};
\end{scope}

\begin{scope}[xshift=8.8cm]
  \node[elem] (c3b) at (0,0) {};
  \node[elem] (c3l) at (0,0.8) {};
  \node[elem] (c3u) at (0,1.6) {};
  \node[elem] (c3t) at (0,2.4) {};
  \node[lab] at (0,-0.35) {\(\bot\)};
  \node[lab] at (0.3,0.8) {\(\ell\)};
  \node[lab] at (0.3,1.6) {\(u\)};
  \node[lab] at (0,2.75)  {\(\top\)};
  \draw[edge] (c3b) -- (c3l);
  \draw[edge] (c3l) -- (c3u);
  \draw[edge] (c3u) -- (c3t);
  \node[lab] at (0,-1.0) {\(C_3\)};
\end{scope}

\begin{scope}[xshift=0cm, yshift=-7.0cm]
  \node[elem] (l3b)  at (0,0) {};
  \node[elem] (l31)  at (-1.0,1.3) {};
  \node[elem] (l32)  at (1.0,1.3)  {};
  \node[elem] (l3u)  at (0,2.6)    {};
  \node[elem] (l3t)  at (0,3.9)    {};
  \node[lab] at (0,-0.35) {\(\bot\)};
  \node[lab] at (-1.45,1.3) {\(\ell_1\)};
  \node[lab] at (1.45,1.3)  {\(\ell_2\)};
  \node[lab] at (0.3,2.6)   {\(u\)};
  \node[lab] at (0,4.25)    {\(\top\)};
  \draw[edge] (l3b) -- (l31);
  \draw[edge] (l3b) -- (l32);
  \draw[edge] (l31) -- (l3u);
  \draw[edge] (l32) -- (l3u);
  \draw[edge] (l3u) -- (l3t);
  \node[lab] at (0,-1.0) {\(L_3\)};
\end{scope}

\begin{scope}[xshift=4.6cm, yshift=-7.0cm]
  \node[elem] (u3b)  at (0,0) {};
  \node[elem] (u3l)  at (0,1.3) {};
  \node[elem] (u31)  at (-1.0,2.6) {};
  \node[elem] (u32)  at (1.0,2.6)  {};
  \node[elem] (u3t)  at (0,3.9)    {};
  \node[lab] at (0,-0.35) {\(\bot\)};
  \node[lab] at (0.3,1.3) {\(\ell\)};
  \node[lab] at (-1.45,2.6) {\(u_1\)};
  \node[lab] at (1.45,2.6)  {\(u_2\)};
  \node[lab] at (0,4.25)    {\(\top\)};
  \draw[edge] (u3b) -- (u3l);
  \draw[edge] (u3l) -- (u31);
  \draw[edge] (u3l) -- (u32);
  \draw[edge] (u31) -- (u3t);
  \draw[edge] (u32) -- (u3t);
  \node[lab] at (0,-1.0) {\(U_3\)};
\end{scope}

\begin{scope}[xshift=9.2cm, yshift=-7.0cm]
  \node[elem] (e3b)  at (0,0) {};
  \node[elem] (e31)  at (-1.2,1.3) {};
  \node[elem] (e32)  at (1.2,1.3)  {};
  \node[elem] (e33)  at (-1.2,2.6) {};
  \node[elem] (e34)  at (1.2,2.6)  {};
  \node[elem] (e3t)  at (0,3.9)    {};
  \node[lab] at (0,-0.35) {\(\bot\)};
  \node[lab] at (-1.65,1.3) {\(\ell_1\)};
  \node[lab] at (1.65,1.3)  {\(\ell_2\)};
  \node[lab] at (-1.65,2.6) {\(u_1\)};
  \node[lab] at (1.65,2.6)  {\(u_2\)};
  \node[lab] at (0,4.25)    {\(\top\)};
  \draw[edge] (e3b) -- (e31);
  \draw[edge] (e3b) -- (e32);
  \draw[edge] (e31) -- (e33);
  \draw[edge] (e32) -- (e34);
  \draw[edge] (e33) -- (e3t);
  \draw[edge] (e34) -- (e3t);
  \node[lab] at (0,-1.0) {\(E_3\)};
\end{scope}

\end{tikzpicture}
\caption{Hasse diagrams of the seven bounded posets.}
\label{fig:seven-posets}
\end{figure}

\begin{proposition}\label{prop:seven}
Let \(P\) be a bounded poset of height at most three,
and let \(\{Q_j\}_{j\in J}\) be the set of posets in Figure~\ref{fig:seven-posets}
that occur as subposets of \(P\).
Then
\[
\mathsf{V}(I(P)) = \mathsf{V}(I(Q_j)\mid j\in J).
\]
\end{proposition}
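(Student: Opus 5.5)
One inclusion is immediate. Each \(Q_j\) is a subposet of \(P\), so by Lemma~\ref{lem:icksa1ak}(i) the semiring \(I(Q_j)\) is a subsemiring of \(I(P)\). Hence \(\mathsf{V}(I(Q_j)\mid j\in J)\subseteq\mathsf{V}(I(P))\).

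For the reverse inclusion I would use Lemma~\ref{lem:separation-hsp}. It suffices to build a family of strictly order preserving maps \(f\colon P\to Q_j\), with \(j\in J\), that jointly separates \(P\). Since \(C_1\) is a subposet of every bounded \(P\), the index set \(J\) is nonempty.

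The plan splits by the height of \(P\). Let \(h=\htp(P)\), so \(h\le 3\).

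If \(h=1\), then \(P=C_1\) and there is nothing to prove.

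If \(h=2\), every interior point is rigid of rank \(1\). If \(P\) has one interior point, then \(P=C_2\). Otherwise \(E_2\) is a subposet, and the maps \(P\to E_2\) sending one chosen interior point to \(a\) and all others to \(a'\) are strictly order preserving and jointly separate.

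If \(h=3\), I would start from the rank map \(r\colon P\to C_3\) (\(C_3\) is a subposet), which separates different ranks. Using \(\ell(\bot,p)+\ell(p,\top)\le 3\), the interior points fall into three kinds: rigid of rank \(1\), rigid of rank \(2\), and nonrigid (which have \(\ell(\bot,p)=\ell(p,\top)=1\)). It then remains to separate two distinct points \(p,q\) of the same rank, by maps into whichever of \(E_2,L_3,U_3,E_3\) actually embed into \(P\). I would proceed as follows.

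\begin{enumerate}[(a)]
\item A nonrigid point \(p\) is separated from every other interior point by the map \(P\to C_3\) used in Lemma~\ref{lem:ipck}, namely \(f_p\). So only pairs of rigid points of equal rank remain.
\item Take two rigid points \(p\ne q\) of rank \(1\). Choose rigid covers \(u_p>p\) and \(u_q>q\) of rank \(2\).
\begin{itemize}
\item If some rank-\(2\) rigid point lies above both \(p\) and \(q\), then \(L_3\) is a subposet. Map \(p\mapsto\ell_1\) and every other rank-\(1\) point \(\mapsto\ell_2\). Send all rank-\(2\) points and nonrigid points to \(u\), or send nonrigid points to \(\ell_2\); I will check that this is strictly order preserving using the rank structure.
\item If no such common cover exists, then \(E_3\) is a subposet. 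Map the up-set of \(p\) into the chain \(\ell_1<u_1\) and everything else into \(\ell_2<u_2\), sending nonrigid points to \(\ell_2\) or \(u_2\) according to whether they lie in the up-set of \(p\).
\end{itemize}
\item Pairs of rank \(2\) are handled dually, using \(U_3\) or \(E_3\).
\end{enumerate}

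The main obstacle is step (b): the definitions must remain strictly order preserving when the remaining points are collapsed onto a single side. In particular, a rank-\(1\) point \(x\ne p\) may lie below a rank-\(2\) point above \(p\). I expect to fix this by sending all rank-\(2\) points to a common element when \(L_3\) is available. In the \(E_3\) case there is no common cover of \(p\) and \(q\), but a third point \(x\) could still share covers with \(p\). Then I would instead use the \(L_3\) map for the pair \((p,x)\), and the \(E_3\) map defined via the down-set of the up-set of \(p\) for \(q\). This case analysis is where the real verification lies.
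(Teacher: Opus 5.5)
Your framework matches the paper's: joint separation via Lemma~\ref{lem:separation-hsp}, then the rank map into \(C_3\) together with \(f_p\) for nonrigid points, then \(L_3\) or \(E_3\) for rigid rank-one pairs, and dually \(U_3\) or \(E_3\) at rank two. The height-one and height-two cases, step (a), and the first bullet of (b) are fine. The gap is the one you flag in the second bullet of (b), and your proposed repair does not close it.

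Your \(E_3\) map is strictly order preserving only when no rank-two point above \(p\) lies above another rank-one point. The hypothesis that \(p\) and \(q\) have no common cover does not give this. Replacing the up-set of \(p\) by the down-set of that up-set still fails: for \(p<y_1>x<y_2\) with \(y_2\not>p\), it sends \(x\mapsto\ell_1\) and \(y_2\mapsto u_2\). Worse, in the fence \(p<y_1>x<y_2>q\), no map into \(E_3\) can separate \(p\) from \(q\) at all. Every interior comparability \(r<s\) of \(P\) extends to a chain \(\bot<r<s<\top\), and such a chain must go onto one branch of \(E_3\). So all five points land in a single branch, with \(p,x,q\) on its rank-one point. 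Separating \((p,x)\) by \(L_3\) does not help, since the pair you need is \((p,q)\).

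The missing observation is that your first-bullet map never uses a common cover of \(p\) and \(q\). That map sends \(p\mapsto\ell_1\), every other rank-one point \(\mapsto\ell_2\), every rank-two point \(\mapsto u\), and nonrigid points to any interior point. In height three the only interior comparabilities are rigid rank one below rigid rank two, and \(\ell_1,\ell_2<u\). So this map is strictly order preserving whenever \(L_3\) is a subposet of \(P\) at all, and it separates \(p\) from every other rank-one point.

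Split on that instead:
\begin{itemize}
\item If \(L_3\) occurs in \(P\), use this map.
\item If not, no two rank-one points share an upper cover. Then your original \(E_3\) map (up-set of \(p\) onto \(\ell_1<u_1\), everything else onto \(\ell_2<u_2\)) is strictly order preserving. Moreover, \(E_3\) occurs via \(p<u_p\), \(q<u_q\) with \(u_p\neq u_q\).
\end{itemize}
Rank two is dual, with \(U_3\) in place of \(L_3\).

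The paper's own Case 2 uses the same common-upper-bound dichotomy. Its \(E_3\) map, which sends the other points of \(U\) to \(u_1\) and the other points of \(L\) to \(\ell_1\), fails in the same way, e.g.\ when \(\ell_2\) has a second upper cover. So this repair is needed there as well.
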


\begin{proof}
By Lemma~\ref{lem:icksa1ak}(i), $I(Q_j)$ is a subsemiring of $I(P)$ whenever $Q_j$ is a subposet of $P$.
This implies one containment
\[
\mathsf{V}(I(Q_j)\mid j\in J) \subseteq \mathsf{V}(I(P)).
\]

For the reverse containment, we construct strictly order preserving maps into the posets
of Figure~\ref{fig:seven-posets} that occur in \(P\), separating any
prescribed distinct pair of points of \(P\).
In what follows, all the maps we construct will fix the \(\bot\) and \(\top\), so it remains only to determine the images of the interior points.

If \(P\) has no interior points, it is \(C_1\). If its interior has
no comparisons, it is an antichain. One interior point \(a\) gives
\(C_2\); at least two points \(a\) and \(a'\) give an induced \(E_2\).
Arbitrary assignments of all interior points to the two interior
points of this \(E_2\) are strictly order preserving and, taken together, separate every pair.

Now suppose there is an interior comparison. Partition the
interior into \(L,U,M\), where \(L\) consists of points having an
interior successor, \(U\) of points having an interior predecessor,
and \(M\) of points having neither. No point belongs to both
\(L\) and \(U\), since such a point would lie in a four-step
chain between the bounds. Every interior comparison therefore
has the form \(\ell<u\) with \(\ell\in L,u\in U\), every point of \(L\)
and of \(U\) has a neighbor on the other side, and \(M\) has no
interior comparisons.

Choose one comparison \(\ell<u\). It gives an induced \(C_3\).
Send all of \(L\) to \(\ell\), all of \(U\) to \(u\), and each
point of \(M\) independently to either point. These maps are
strictly order preserving. They separate each bound from the interior, \(L\) from
\(U\), any point of \(M\) from a point of \(L\) or \(U\), and any
two distinct points of \(M\).

It remains to separate pairs within \(L\) or within \(U\). Fix
distinct \(\ell_1,\ell_2\in L\). We consider the following two cases.

\textbf{Case 1.} The points \(\ell_1\) and \(\ell_2\) have a common upper
bound \(u\). Then these three interior points induce \(L_3\).
Map every point of \(U\) to \(u\), keep \(\ell_1\) and \(\ell_2\) fixed,
map every other point of \(L\) to \(\ell_1\), and map every point of
\(M\) to \(u\). This gives a strictly order preserving map from \(P\) to \(L_3\)
separating \(\ell_1\) and \(\ell_2\).

\textbf{Case 2.} The points \(\ell_1\) and \(\ell_2\) have no common upper
bound. Choose upper neighbors \(u_1\) of \(\ell_1\) and \(u_2\) of \(\ell_2\).
These four points induce \(E_3\). Keep these four points fixed,
assign every other point of \(U\) to \(u_1\), every other point of
\(L\) to \(\ell_1\), and every point of \(M\) to \(u_1\). Then this map
is strictly order preserving and separates \(\ell_1\) and \(\ell_2\).

Dually, we can separate any distinct \(u_1,u_2\in U\). If they have
a common lower bound, we use \(U_3\); if they have no common lower
bound, we use \(E_3\).

This exhausts all pairs. By Lemma~\ref{lem:separation-hsp}, \(I(P)\)
lies in the variety generated by the interval semirings of the
induced bounded posets considered above. Hence the reverse
containment holds as well. Combining both containments, we obtain
the desired equality.
\end{proof}

In what follows, we investigate the finite basis problem for the subvarieties of \(\B_3\) and the structure of the lattice of subvarieties \(\mathcal{L}(\B_3)\).
We begin with the following two lemmas. The first one is well known, and we state it below without proof.
\begin{lemma}\label{prosubvariety}
Let \(\mathcal{W}\) be a maximal subvariety of \(\mathcal{V}_{\infty}\), and let \(\Sigma\) be a basis for \(\mathcal{V}_{\infty}\).
If an identity \(\sigma\) is satisfied by \(\mathcal{W}\) but not by \(\mathcal{V}_{\infty}\), then \(\Sigma\cup\{\sigma\}\) is a basis for \(\mathcal{W}\).
In particular, if \(\mathcal{V}_{\infty}\) is finitely based and has only finitely many subvarieties, then every subvariety of \(\mathcal{V}_{\infty}\) is finitely based.
\end{lemma}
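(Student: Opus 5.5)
The statement is phrased for \(\V_\infty\), but it is really a general fact about any variety \(\mathcal U\) with a maximal subvariety \(\W\). I will argue in that generality, using only Birkhoff's theorem and the definition of maximality. Let \(\Sigma\) be a basis of \(\mathcal U\), and let \(\sigma\) be an identity holding in \(\W\) but not in \(\mathcal U\). Set \(\mathcal U'=\Mod(\Sigma\cup\{\sigma\})\).

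The first step is to locate \(\mathcal U'\). Since \(\mathcal U=\Mod(\Sigma)\), we have \(\mathcal U'=\mathcal U\cap\Mod(\sigma)\), so \(\mathcal U'\subseteq\mathcal U\). Every member of \(\W\) lies in \(\mathcal U\), hence satisfies \(\Sigma\), and it satisfies \(\sigma\) by hypothesis. Therefore
\[
\W\subseteq\mathcal U'\subseteq\mathcal U .
\]
Moreover \(\mathcal U'\neq\mathcal U\): if they were equal, \(\mathcal U\) would satisfy \(\sigma\), contrary to the choice of \(\sigma\). So \(\mathcal U'\) is a proper subvariety of \(\mathcal U\) containing \(\W\). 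Maximality of \(\W\) among proper subvarieties then forces \(\mathcal U'=\W\), which says exactly that \(\Sigma\cup\{\sigma\}\) is a basis for \(\W\).

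For the ``in particular'' clause, assume \(\mathcal U\) has a finite basis \(\Sigma\) and only finitely many subvarieties. I will prove that every subvariety \(\W\) of \(\mathcal U\) is finitely based, by downward induction in the finite lattice \(\Lat(\mathcal U)\) on the length of the longest chain from \(\W\) up to \(\mathcal U\). The base case \(\W=\mathcal U\) holds by assumption. If \(\W\neq\mathcal U\), finiteness of the lattice gives a chain
\[
\W=\W_0\prec\W_1\prec\cdots\prec\W_n=\mathcal U,
\]
in which each \(\W_i\) is a maximal subvariety of \(\W_{i+1}\). By induction \(\W_1\) has a finite basis \(\Sigma_1\).

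Since \(\W_0\subsetneq\W_1\), there is an identity \(\sigma\) holding in \(\W_0\) but not in \(\W_1\): otherwise \(\W_1\) would satisfy every identity of \(\W_0\), and Birkhoff's theorem would give \(\W_1\subseteq\W_0\). Applying the first part with \(\W_1\) in place of \(\mathcal U\) shows that \(\Sigma_1\cup\{\sigma\}\) is a finite basis for \(\W_0\).

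I see no real obstacle here. The one point to state explicitly is that the argument uses nothing about \(\V_\infty\) beyond its being a variety, which is what lets the first part be applied to the intermediate varieties \(\W_1\) of the chain rather than only to \(\V_\infty\).
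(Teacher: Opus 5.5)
Your proof is correct. The paper gives no proof of this lemma: it calls it well known and states it without proof. So there is no argument in the paper to compare yours against.

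Your first part is the standard argument. The induction in the second part is also sound. For any $\W_0\subsetneq\W_1$, the longest chain from $\W_1$ up to $\mathcal U$ is strictly shorter than the longest chain from $\W_0$, because $\W_0$ can be prepended to it. This is the fact that makes the induction hypothesis apply to $\W_1$, and it is worth stating explicitly.

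Your remark that the argument must be run for an arbitrary variety is more than cosmetic. Taken literally for $\V_\infty$, the ``in particular'' clause is vacuous, since the paper itself shows that $\V_\infty$ has continuum many subvarieties. The paper actually invokes the lemma for $\B_3$, and uses the same principle for $\V_4$ and the $\B_{4,i}$. Even deriving the second sentence from the first requires applying the first part to the intermediate varieties $\W_1$, exactly as you do.

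A slightly shorter route avoids the chain induction altogether:
\begin{enumerate}
\item For each of the finitely many subvarieties $\mathcal X$ of $\mathcal U$ with $\mathcal X\not\subseteq\W$, pick an identity $\sigma_{\mathcal X}$ that holds in $\W$ but fails in $\mathcal X$.
\item Then $\Mod(\Sigma\cup\{\sigma_{\mathcal X}\})$ is a subvariety of $\mathcal U$ containing $\W$.
\item It cannot be one of the $\mathcal X$, since it satisfies $\sigma_{\mathcal X}$ while $\mathcal X$ does not; hence it equals $\W$.
\end{enumerate}
This gives a finite basis for $\W$ directly, without passing through covers.
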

The following lemma shows that the algebras \(I(L_3)\), \(I(U_3)\), and \(I(E_3)\) are distinguished by the corresponding identities.
\begin{lemma}\label{lem:identities}
For the four identities
\begin{align}
 xyz+uvz&\approx(x+u)(y+v)z,\tag{\(\alpha\)}\label{eq:alpha}\\
 xyz+xuv&\approx x(y+u)(z+v),\tag{\(\beta\)}\label{eq:beta}\\
 xyz+uvw&\approx xyz+uvw+xyw,\tag{\(\gamma\)}\label{eq:gamma}\\
 xyz+uvw&\approx xyz+uvw+xvw,\tag{\(\delta\)}\label{eq:delta}
\end{align}
the validity table for \((\alpha),(\beta),(\gamma),(\delta)\) on \(I(L_3)\), \(I(U_3)\), and \(I(E_3)\) is given in Table~\ref{tab:identities}, where \(1\) means ``satisfies'' and \(0\) means ``does not satisfy''.
\end{lemma}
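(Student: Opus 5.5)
The plan is to reduce each entry of Table~\ref{tab:identities} to a statement about $3$-step chains in the three posets. The tool is Lemma~\ref{lem:flatuuq} and the flat addition law, applied to the description of $I(P)$. All three posets $L_3,U_3,E_3$ are bounded of height $3$. So a product of three elements of $I(P)$ is nonzero exactly when the factors are the consecutive steps $(p_0,p_1),(p_1,p_2),(p_2,p_3)$ of a maximal chain $\bot=p_0<p_1<p_2<p_3=\top$. In that case the product equals the annihilator $(\bot,\top)$. The maximal chains are:
\begin{itemize}
\item in $L_3$: $\bot<\ell_i<u<\top$ for $i=1,2$;
\item in $U_3$: $\bot<\ell<u_i<\top$ for $i=1,2$;
\item in $E_3$: the two disjoint chains $\bot<\ell_i<u_i<\top$.
\end{itemize}

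For $(\alpha)$ I will compare both sides in a flat semiring. The right-hand side is nonzero only if $x=u$ and $y=v$, since $x+u$ and $y+v$ must be nonzero. In that case it equals $xyz$, which is also the value of the left-hand side. So $(\alpha)$ holds exactly when $xyz=uvz\neq0$ forces $x=u$ and $y=v$. In words: the last step of a maximal chain determines the whole chain. This holds in $U_3$ and $E_3$. It fails in $L_3$: take $x=(\bot,\ell_1)$, $u=(\bot,\ell_2)$, $y=(\ell_1,u)$, $v=(\ell_2,u)$ and $z=(u,\top)$. Then the left side is $(\bot,\top)$ while the right side is $0$. Dually, $(\beta)$ holds exactly when the first step determines the chain. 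It therefore holds in $L_3$ and $E_3$ and fails in $U_3$.

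For $(\gamma)$ and $(\delta)$ I will use Lemma~\ref{lem:flatuuq}. Suppose both $xyz$ and $uvw$ are nonzero, so both equal $(\bot,\top)$ and come from maximal chains. The identity $(\gamma)$ requires $xyw=(\bot,\top)$, that is, the second step $y$ of the first chain ends where the last step $w$ of the second chain begins. Similarly, $(\delta)$ requires the first step $x$ of the first chain to end where the middle step $v$ of the second chain begins.
\begin{itemize}
\item In $L_3$ every maximal chain passes through $u$, so $(\gamma)$ holds. The two chains through $\ell_1$ and $\ell_2$ violate $(\delta)$.
\item $U_3$ is the dual picture: $(\delta)$ holds since all chains pass through $\ell$, and $(\gamma)$ fails.
\item In $E_3$ the two disjoint chains violate both $(\gamma)$ and $(\delta)$.
\end{itemize}
The resulting table has rows $(\alpha,\beta,\gamma,\delta)$ equal to $(0,1,1,0)$ for $I(L_3)$, $(1,0,0,1)$ for $I(U_3)$, and $(1,1,0,0)$ for $I(E_3)$.

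There is no real obstacle here. The only care needed is the reduction of $(\alpha)$ and $(\beta)$, which are not of the form $\mathbf u\approx\mathbf u+\mathbf q$. For these I will either check both directions directly with the flat addition law, as above, or split them into the equivalent family of identities of that form. I must also make sure that every counterexample assignment really gives a nonzero left-hand side.
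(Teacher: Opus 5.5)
Your proposal is correct and takes the same route as the paper. Both arguments note that every product of three elements in these height-three interval semirings is $0$ or $(\bot,\top)$, identify the triples with nonzero product as the consecutive steps of the two maximal chains, and read each table entry off from that; you carry out the case checks that the paper leaves as ``direct verification.'' Your description of $E_3$ as two disjoint chains through $\ell_1<u_1$ and $\ell_2<u_2$ is the right one: the paper's displayed set $\Omega(I(E_3))$ has a misprint, giving $(\bot,\ell_1),(\ell_1,u_2)$ where $(\bot,\ell_2),(\ell_2,u_2)$ is meant.
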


\begin{table}[htbp]
\centering
\caption{The validity table.}
\label{tab:identities}
\begin{tabular}{l|cccc}
\hline
Algebra & \(\alpha\) & \(\beta\) & \(\gamma\) & \(\delta\)\\
\hline
\(I(L_3)\) & \(0\) & \(1\) & \(1\) & \(0\)\\
\(I(U_3)\) & \(1\) & \(0\) & \(0\) & \(1\)\\
\(I(E_3)\) & \(1\) & \(1\) & \(0\) & \(0\)\\
\hline
\end{tabular}
\end{table}

\begin{proof}
Let \(S\in \{I(L_3),I(U_3),I(E_3)\}\), and let \(\mathbf{u} \approx \mathbf{v}\) be one of the identities \((\alpha)\)--\((\delta)\).
By construction, every element of \(S^3\) is either \(0\) or \(\omega\), where \(\omega = (\bot,\top)\).
Moreover, the terms \(\mathbf{u}\) and \(\mathbf{v}\) are sums of words of length \(3\); hence, for every assignment,
\[
\varphi(\mathbf{u}),\varphi(\mathbf{v})\in \{0,\omega\}.
\]
Therefore, \(S\) satisfies the identity \(\mathbf{u}\approx \mathbf{v}\) if and only if either
\[
\varphi(\mathbf{u}) = \varphi(\mathbf{v}) = \omega
\quad\text{or}\quad
\varphi(\mathbf{u}) = \varphi(\mathbf{v}) = 0.
\]
Finally, put
\[
\Omega(S) = \{(a,b,c)\in S^3\mid abc=\omega\}.
\]
Then
\[
\Omega(I(L_3)) = \{((\bot,l_1),(l_1,u),(u,\top)),\ ((\bot,l_2),(l_2,u),(u,\top))\},
\]
\[
\Omega(I(U_3)) = \{((\bot,l),(l,u_1),(u_1,\top)),\ ((\bot,l),(l,u_2),(u_2,\top))\},
\]
\[
\Omega(I(E_3)) = \{((\bot,l_1),(l_1,u_1),(u_1,\top)),\ ((\bot,l_1),(l_1,u_2),(u_2,\top))\}.
\]
Thus, Table~\ref{tab:identities} follows by direct verification.
\end{proof}

\begin{proposition}
The variety \(\B_3\) has exactly \(11\) subvarieties, and its lattice of subvarieties is shown in Figure~\ref{fig:lat-b3}.
Every nontrivial subvariety of \(\B_3\) is generated by a nonempty subset of
\[
\{I(C_1),I(C_2),I(E_2),I(C_3),I(L_3),I(U_3),I(E_3)\}.
\]
Moreover, every subvariety of \(\B_3\) is finitely based; consequently, \(\B_3\) is a Cross variety.
\end{proposition}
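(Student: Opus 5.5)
The plan is to reduce everything to the seven generators via Proposition~\ref{prop:seven}, work out the containments among the varieties they generate, and then separate these varieties with identities. Since $\B_3$ is locally finite (Lemma~\ref{localfinite}), every subvariety $\W$ of $\B_3$ is generated by its finite subdirectly irreducible members. By Proposition~\ref{prop:representation} and Lemma~\ref{lem:height-u}(i), these are the semirings $I(P)$ with $P$ bounded of height at most three. Proposition~\ref{prop:seven} then gives $\mathsf V(I(P))=\mathsf V(I(Q_j)\mid Q_j\text{ a subposet of }P)$, and each such $I(Q_j)$ lies in $\W$ by Lemma~\ref{lem:icksa1ak}(i). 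Hence every nontrivial subvariety is $\mathsf V(\mathcal S)$ for a nonempty subset $\mathcal S$ of the seven generators, which proves the second claim.

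Next I determine which of these subsets generate the same variety. Containments between generators come from subposet embeddings. Among the chains, $C_1\subseteq C_2\subseteq C_3$, so $\V_1<\V_2<\V_3$. Since $E_2$ is a subposet of $C_3$ (the relation $a<a'$ is forgotten), $\V_2<\mathsf V(I(E_2))=\B_2\le \V_3$. Moreover $C_3$ is a subposet of each of $L_3,U_3,E_3$, so each of these contains $\V_3$.

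Non-containments come from identities. By Lemma~\ref{lem:height-u}, $N_1$ and $N_2$ separate heights, and $U_2$ separates $E_2$ from $C_2$. The three generators $I(L_3),I(U_3),I(E_3)$ all fail $U_3$, while $\V_3$ satisfies it. Table~\ref{tab:identities} separates these three from one another: $(\gamma)$ holds in $I(L_3)$ but fails in the other two; $(\delta)$ holds in $I(U_3)$ but fails in the other two; and $(\alpha)\wedge(\beta)$ holds in $I(E_3)$ but fails in each of the others. For a join I use the intersection of identity sets. For example, $I(E_3)\notin\mathsf V(I(L_3),I(U_3))$ would follow from an identity true in both $I(L_3)$ and $I(U_3)$ but false in $I(E_3)$. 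No single column of Table~\ref{tab:identities} does this, so I expect to need a further identity, or else to show that $\mathsf V(I(L_3),I(U_3))$ actually contains $I(E_3)$.

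I must settle this by a direct separation argument. I would try to build jointly separating strictly order-preserving maps from $E_3$ into $L_3$ and $U_3$ and check whether they can distinguish $\ell_1$ from $\ell_2$. A map $E_3\to L_3$ may send $\ell_1,\ell_2$ to $\ell_1,\ell_2$ and both $u_i$ to $u$, which separates the lower pair; dually, a map into $U_3$ separates $u_1,u_2$. If these maps work, then $I(E_3)\in\mathsf V(I(L_3),I(U_3))$, and the collapse of subsets is governed by this relation. Counting the resulting varieties then gives the lattice: trivial, $\V_1,\V_2,\B_2,\V_3$, the three $\mathsf V(I(X))$ for $X=L_3,U_3,E_3$, the joins $\mathsf V(I(L_3),I(E_3))$ and $\mathsf V(I(U_3),I(E_3))$, and the top $\mathsf V(I(L_3),I(U_3))=\B_3$, which is eleven in total. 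Distinctness is confirmed with $(\gamma)$ and $(\delta)$: $\mathsf V(I(L_3),I(E_3))$ fails $(\delta)$ but $I(U_3)$ satisfies it, and conversely. The \emph{main obstacle} is exactly this join analysis together with confirming that no further coincidences occur.

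Finally, for finite basedness, $\B_3=\V_\infty\cap\Mod(N_3)$ is finitely based by Theorem~\ref{thm:vinftyfb}. Each subvariety is obtained from $\B_3$ by a finite chain of maximal-subvariety steps in this finite lattice, and at each step I add one separating identity, namely $N_k$, $U_k$, $(\alpha)$–$(\delta)$, or a conjunction of them. Applying the argument of Lemma~\ref{prosubvariety} relative to $\B_3$ then shows that every subvariety is finitely based. Together with $\B_3=\mathsf V(I(Q_3))$ from Proposition~\ref{prop:finite}, this makes $\B_3$ a Cross variety.
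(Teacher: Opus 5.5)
Your route is essentially the paper's. You reduce to subsets of the seven generators via Proposition~\ref{prop:seven} and use $U_3$ to put $\V_3$ strictly below $I(L_3)$, $I(U_3)$, $I(E_3)$. You get $I(E_3)\in\mathsf V(I(L_3),I(U_3))$ from the same two jointly separating maps the paper uses; they are strictly order preserving, so your ``if these maps work'' can be dropped. You read the remaining separations off Table~\ref{tab:identities} and finish with Lemma~\ref{prosubvariety}. The only differences are cosmetic. You check the bottom chain $\V_1<\V_2<\B_2<\V_3$ directly from subposet embeddings and $N_1,U_2,N_2$, where the paper cites the earlier analysis of $\mathsf V(S(abc))$. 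You also make finite generation explicit via $\B_3=\mathsf V(I(Q_3))$.

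One step is wrong as written: the distinctness check for the two joins. You say $\mathsf V(I(L_3),I(E_3))$ fails $(\delta)$ while $I(U_3)$ satisfies it, and conversely. That only shows
\[
\mathsf V(I(L_3),I(E_3))\not\subseteq\mathsf V(I(U_3)) \quad\text{and}\quad \mathsf V(I(U_3),I(E_3))\not\subseteq\mathsf V(I(L_3)).
\]
Since $I(E_3)$ fails both $(\gamma)$ and $(\delta)$, both joins fail both identities. So these identities cannot separate the two joins from each other or from $\B_3$; for instance, they do not rule out $I(U_3)\in\mathsf V(I(L_3),I(E_3))$. Your use of $(\alpha)\wedge(\beta)$ only separates $I(E_3)$ from the other two singletons.

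The fix is to use the columns $(\alpha)$ and $(\beta)$ separately. $(\beta)$ holds in $I(L_3)$ and $I(E_3)$ but fails in $I(U_3)$, so $I(U_3)\notin\mathsf V(I(L_3),I(E_3))$. Dually, $(\alpha)$ gives $I(L_3)\notin\mathsf V(I(U_3),I(E_3))$. With this, the count of $11$ and the finite-basis argument go through as you describe.
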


\begin{proof}
By Lemma~\ref{localfinite}, the variety \(\B_3\) is locally finite.
Thus every subvariety of \(\B_3\) is generated by some finite subdirectly irreducible members.
By Proposition~\ref{prop:representation}, every such member is isomorphic to an interval semiring of a bounded poset of height at most \(3\).
By Proposition~\ref{prop:seven}, every nontrivial subvariety of \(\B_3\) is generated by a nonempty subset of
\[
\{I(C_1),I(C_2),I(E_2),I(C_3),I(L_3),I(U_3),I(E_3)\}.
\]

We first claim that
\begin{equation}\label{S(abc)}
  \mathsf{V}(I(C_1))<\mathsf{V}(I(C_2))<\mathsf{V}(I(E_2))<\mathsf{V}(I(C_3)).
\end{equation}

\begin{equation}\label{c3l3u3e3}
\mathsf{V}(I(C_3))<\mathsf{V}(I(L_3)),\ \mathsf{V}(I(U_3)),\ \mathsf{V}(I(E_3)).
\end{equation}

\begin{equation}\label{e3l3u3}
\mathsf{V}(I(E_3))\leq \mathsf{V}(I(L_3),I(U_3)) = \B_3.
\end{equation}

Indeed, \eqref{S(abc)} lists exactly the nontrivial subvarieties of \(V(S(abc))\), namely
\[
\mathsf{V}(S(a))<\mathsf{V}(S(ab))<\mathsf{V}(S(ab)\circ S(ab))<\mathsf{V}(S(abc)),
\]
as shown in~\cite{gr22}, where $S(ab)\circ S(ab)$ is isomorphic to $I(C_2)$.

\eqref{c3l3u3e3} is also clear, since \(L_3\), \(U_3\), and \(E_3\) have height \(3\) and all contain the subposet \(C_3\).
On the other hand, \(I(C_3)\) satisfies the identity \(U_3\):
\[
x_1x_2x_3+y_1y_2y_3\approx (x_1+y_1)(x_2+y_2)(x_3+y_3),
\]
whereas \(I(L_3)\), \(I(U_3)\), and \(I(E_3)\) do not satisfy this identity.

As for~\eqref{e3l3u3}, define maps \(\varphi_1\colon E_3\to L_3\) and \(\varphi_2\colon E_3\to U_3\) by fixing \(\bot\) and \(\top\) and setting
\[
\varphi_1(l_1)=l_1,\quad \varphi_1(l_2)=l_2,\quad \varphi_1(u_1)=\varphi_1(u_2)=u,
\]
\[
\varphi_2(l_1)=l_1,\quad \varphi_2(l_2)=l,\quad \varphi_2(u_1)=u_1,\quad \varphi_2(u_2)=u_2.
\]
Then \(\{\varphi_1,\varphi_2\}\) jointly separate \(E_3\).
By Lemma~\ref{lem:separation-hsp}, it follows that
\[
I(E_3)\in \mathsf{V}(I(L_3),I(U_3)).
\]
Also, \(\B_3 = \mathsf{V}(I(L_3),I(U_3),I(E_3)) = \mathsf{V}(I(L_3),I(U_3))\).

It remains to consider the proper subvarieties of \(\B_3\) that are larger than \(\mathsf{V}(I(C_3))\).
Such a subvariety is generated by a nonempty subset of
\[
\{I(L_3),I(U_3),I(E_3)\}.
\]
These are
\[
\mathsf{V}(I(L_3)),\ \mathsf{V}(I(U_3)),\ \mathsf{V}(I(E_3)),\
\mathsf{V}(I(L_3),I(E_3)),\ \mathsf{V}(I(U_3),I(E_3)).
\]
By Lemma~\ref{lem:identities}, \(\B_3\) has exactly \(11\) subvarieties; its Hasse diagram is shown in Figure~\ref{fig:lat-b3}.
Since \(\B_3\) is finitely based and has only finitely many subvarieties, Lemma~\ref{prosubvariety} implies that every subvariety of \(\B_3\) is finitely based.
\end{proof}

\begin{figure}[htbp]
\centering

\begin{tikzpicture}[
  scale=0.68,
  elem/.style={circle, fill=black, inner sep=1.6pt},
  lab/.style={font=\small, fill=white, inner sep=1pt},
  edge/.style={draw, thick}
]

\node[elem] (B) at ( 0.0, 5.0) {};   
\node[elem] (L) at (-2.5, 2.5) {};   
\node[elem] (U) at ( 2.5, 2.5) {};   
\node[elem] (C) at ( 0.0, 0.0) {};   

\node[elem] (E) at (0,2.5) {};       

\node[elem] (LE) at (-1.25, 3.75) {}; 
\node[elem] (UE) at ( 1.25, 3.75) {}; 

\node[elem] (D)  at (0,-0.9) {};
\node[elem] (A2) at (0,-1.8) {};
\node[elem] (A1) at (0,-2.7) {};
\node[elem] (T)  at (0,-3.6) {};

\node[lab, above=4pt] at (B) {\(\B_3\)};
\node[lab, left=4pt]  at (L) {\(\mathsf{V}(I(L_3))\)};
\node[lab, right=4pt] at (U) {\(\mathsf{V}(I(U_3))\)};
\node[lab, right=6pt] at (C) {\(\mathsf{V}(I(C_3))\)};
\node[lab, below right=2pt] at (E) {\(\mathsf{V}(I(E_3))\)};

\node[lab, above left=2pt]  at (LE) {\(\mathsf{V}(I(L_3))\vee\mathsf{V}(I(E_3))\)};
\node[lab, above right=2pt] at (UE) {\(\mathsf{V}(I(U_3))\vee\mathsf{V}(I(E_3))\)};

\node[lab, right=3pt] at (D)  {\(\mathsf{V}(I(E_2))\)};
\node[lab, right=3pt] at (A2) {\(\mathsf{V}(I(C_2))\)};
\node[lab, right=3pt] at (A1) {\(\mathsf{V}(I(C_1))\)};
\node[lab, right=3pt] at (T)  {\(\mathbf{T}\)};

\draw[edge] (T)--(A1)--(A2)--(D)--(C);

\draw[edge] (C)--(L);
\draw[edge] (C)--(E);
\draw[edge] (C)--(U);

\draw[edge] (L)--(LE);
\draw[edge] (E)--(LE);
\draw[edge] (U)--(UE);
\draw[edge] (E)--(UE);

\draw[edge] (LE)--(B);
\draw[edge] (UE)--(B);

\end{tikzpicture}
\caption{The subvariety lattice \(\mathcal{L}(\B_3)\). }
\label{fig:lat-b3}
\end{figure}

\begin{remark}
Let \(\V\) be a variety, \(\U\) a subvariety of \(\V\), and \(\sigma\)
an identity satisfied by \(\U\) but not by \(\V\). Then it is easy to
verify that \(\U\) is the subvariety of \(\V\) defined by \(\sigma\).
Using this basic fact, together with Lemma~\ref{lem:identities}, the
explicit finite bases for the subvarieties of \(\mathcal B_3\) are as
follows.
\begin{itemize}
\item \(\mathsf{V}(\mathcal B_3)=\Mod(\Delta_{\infty}, N_3)\).
\item \(\mathsf{V}(I(L_3),I(E_3)) = \Mod(\Delta_{\infty}, N_3, \beta)\).
\item \(\mathsf{V}(I(U_3),I(E_3)) = \Mod(\Delta_{\infty}, N_3, \alpha)\).
\item \(\mathsf{V}(I(L_3)) = \Mod(\Delta_{\infty}, N_3, \gamma)\).
\item \(\mathsf{V}(I(U_3)) = \Mod(\Delta_{\infty}, N_3, \delta)\).
\item \(\mathsf{V}(I(E_3)) = \Mod(\Delta_{\infty}, N_3, \alpha,\beta)\).
\item \(\mathsf{V}(I(C_3)) = \mathcal{V}_3 = \Mod(\Delta_3)\).
\item \(\mathcal{V}(I(E_2)) = \mathcal B_2 = \Mod(\Delta_{\infty},N_2)\).
\item \(\mathcal{V}(I(C_2)) = \mathcal V_2 = \Mod(\Sigma_2)\).
\item \(\mathcal{V}(I(C_1)) = \mathcal V_1 = \Mod(\Delta_1)\).
\item \(\mathbf{T}= \Mod(x\approx y)\).
\end{itemize}
\end{remark}

Since \(\B_3\) is a maximal subvariety of \(\V_4\), we immediately
obtain the following corollary.

\begin{corollary}\label{coro:crossv4}
The variety \(\V_4\) is a Cross variety, and it contains exactly \(12\)
subvarieties.
\end{corollary}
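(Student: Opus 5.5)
To prove Corollary~\ref{coro:crossv4}, I would first pin down the lattice $\Lat(\V_4)$. By Proposition~\ref{prop:sandwich}, $\B_3$ is a maximal subvariety of $\V_4$. I claim that every proper subvariety $\W$ of $\V_4$ is contained in $\B_3$. The argument uses local finiteness of $\V_4$ (Lemma~\ref{localfinite}), so $\W$ is generated by its finite subdirectly irreducible members. By Proposition~\ref{prop:bounded-separation}, each of these is an $I(P)$ with $P$ bounded, $\htp(P)\le 4$, and at most one $4$-step chain.

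If some such $P$ has height $4$, then $C_4$ is a subposet of $P$. Lemma~\ref{lem:icksa1ak}(i) then gives $I(C_4)\in\W$, hence $\W=\V_4$. Otherwise every generator has height at most $3$, so $\W$ satisfies $N_3$ by Lemma~\ref{lem:height-u}(i) and $\W\subseteq\B_3$. Therefore
\[
\Lat(\V_4)=\Lat(\B_3)\cup\{\V_4\},
\]
and since $\V_4\ne\B_3$ (because $I(C_4)\notin\B_3$), the preceding proposition yields exactly $11+1=12$ subvarieties.

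Next I would check the Cross property. $\V_4$ is finitely based by Theorem~\ref{thm:bases}, with $\V_4=\Mod(\Delta_4)$. It is finitely generated by the finite semiring $I(C_4)\cong S(a_1a_2a_3a_4)$. It has finitely many subvarieties, namely the $12$ just counted. This gives all three conditions in the definition of a Cross variety.

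For completeness I would also note that every subvariety is finitely based. The proper ones are the subvarieties of $\B_3$, all shown finitely based in the preceding proposition. Alternatively, $\B_3=\Mod(\Delta_4\cup\{N_3\})$ because $\B_3$ is the unique maximal subvariety of $\V_4$ and $N_3$ holds in $\B_3$ but fails in $\V_4$; this is the same reasoning as in Lemma~\ref{prosubvariety}, applied relative to $\V_4$.

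The main point needing care is the step that a proper subvariety cannot contain any $I(P)$ of height $4$, since that is what makes $\B_3$ the unique coatom. It follows from the fact that any height-$4$ bounded poset contains a $4$-step chain as a subposet, so I expect no real obstacle.
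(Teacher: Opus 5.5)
Your proposal is correct and follows essentially the paper's route: the paper deduces the corollary in one line from Proposition~\ref{prop:sandwich}, the count of $11$ subvarieties of $\B_3$, and Theorem~\ref{thm:bases}. Your explicit check that every proper subvariety of $\V_4$ lies in $\B_3$ is what the count $11+1$ actually needs, since it makes $\B_3$ the largest proper subvariety rather than merely a maximal one. That check is the same argument as the last paragraph of the proof of Proposition~\ref{prop:sandwich}.
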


\section{A continuum many subvarieties of height four}\label{sec:independence}
In this section, we prove that \(\mathcal B_k\) contains continuum many
subvarieties for each \(k\geq 4\). We adopt a semantic approach, namely,
we construct a countably infinite family of independent algebras.
A family \((A_i)_{i\in I}\) of algebras is said to be \emph{independent}
if there is a family \((\sigma_i)_{i\in I}\) of identities such that,
for all \(i,j\in I\), $A_i\vDash\sigma_j$ if and only if $i\neq j$.

The construction of independent algebras in this section relies on the
fact that no cycle graph embeds into another. For each cycle of length \(n\geq 5\), we shall construct a
corresponding bounded poset \(P_n\) of height \(4\), and then derive
the associated ai-semiring \(I(P_n)\) together with the ai-semiring
term \(\bt_{P_n}\).

In the following, every graph under consideration is a simple undirected
graph in which each vertex lies in at least one edge and at least one
nonedge. For each such graph \(G=(V(G),E(G))\), write
\[
 D(G)=\binom{V(G)}2\setminus E(G),
\]
and define a disjoint set
\[
 P(G)=\{\bot,\top\}\sqcup\{\ell_e:e\in E(G)\}
       \sqcup\{m_v:v\in V(G)\}\sqcup\{u_d:d\in D(G)\}.
\]
Its strict order is generated by
\begin{equation}\label{eq:graph-order}
 \bot<\ell_e<m_v<u_d<\top,\quad v\in e\cap d.
\end{equation}

For each \(n\geq 5\), let \(Z_n\) denote the cycle of length $n$ with vertex set
\[
 V(Z_n)=\{0,1,2,\ldots,n-1\}
\]
and edge set
\[
 E(Z_n)=\{\{i,i+1\}\mid 0\leq i\leq n-1\},
\]
where the indices are taken modulo \(n\), so that \(n=0\).
In \(Z_n\), every vertex has exactly two neighbors and \(n-3\) distinct
non-neighbors; hence all hypotheses of the construction are satisfied.
Set \(P_n=P(Z_n)\).
Figures~\ref{fig:five-cycle} and~\ref{fig:five-cycle-hasse} show the
\(5\)-cycle \(Z_5\) together with its nonedges, and the Hasse diagram of
the poset \(P_5\).

\begin{figure}[htbp]
\centering
\begin{tikzpicture}[
  scale=0.8,
  vertex/.style={circle, fill=black, inner sep=1.6pt},
  edge/.style={thick},
  nonedge/.style={thick, dashed},
  lab/.style={font=\small}
]

\node[vertex] (v0) at (90:2)   {};
\node[vertex] (v1) at (162:2)  {};
\node[vertex] (v2) at (234:2)  {};
\node[vertex] (v3) at (306:2)  {};
\node[vertex] (v4) at (18:2)   {};

\node[lab] at (90:2.3)   {\(0\)};
\node[lab] at (162:2.3)  {\(1\)};
\node[lab] at (234:2.3)  {\(2\)};
\node[lab] at (306:2.3)  {\(3\)};
\node[lab] at (18:2.3)   {\(4\)};

\draw[edge] (v0) -- (v1);
\draw[edge] (v1) -- (v2);
\draw[edge] (v2) -- (v3);
\draw[edge] (v3) -- (v4);
\draw[edge] (v4) -- (v0);

\draw[nonedge] (v0) -- (v2);
\draw[nonedge] (v0) -- (v3);
\draw[nonedge] (v1) -- (v3);
\draw[nonedge] (v1) -- (v4);
\draw[nonedge] (v2) -- (v4);

\end{tikzpicture}
\caption{A \(5\)-cycle \(Z_5\) and its nonedges.}
\label{fig:five-cycle}
\end{figure}

\begin{figure}[htbp]
\centering
\begin{tikzpicture}[
  scale=0.53,
  elem/.style={circle, fill=black, inner sep=1.4pt},
  lab/.style={font=\footnotesize},
  edge/.style={thick}
]

\node[elem] (bot) at (0,0) {};
\node[elem] (top) at (0,8) {};

\node[lab] at (0,-0.5) {\(\bot\)};
\node[lab] at (0,8.5)  {\(\top\)};

\node[elem] (l01) at (-5.0,1.5) {};
\node[elem] (l12) at (-2.5,1.5) {};
\node[elem] (l23) at (0.0,1.5)  {};
\node[elem] (l34) at (2.5,1.5)  {};
\node[elem] (l40) at (5.0,1.5)  {};

\node[lab] at (-5.7,1.5) {\(\ell_{01}\)};
\node[lab] at (-3.0,1.3) {\(\ell_{12}\)};
\node[lab] at (0.7,1.2)  {\(\ell_{23}\)};
\node[lab] at (3.0,1.3)  {\(\ell_{34}\)};
\node[lab] at (5.7,1.5)  {\(\ell_{40}\)};

\node[elem] (m0) at (-5.0,3.5) {};
\node[elem] (m1) at (-2.5,3.5) {};
\node[elem] (m2) at (0.0,3.5)  {};
\node[elem] (m3) at (2.5,3.5)  {};
\node[elem] (m4) at (5.0,3.5)  {};

\node[lab] at (-5.7,3.5) {\(m_0\)};
\node[lab] at (-2.5,3.9) {\(m_1\)};
\node[lab] at (0.0,3.9)  {\(m_2\)};
\node[lab] at (2.5,3.9)  {\(m_3\)};
\node[lab] at (5.7,3.5)  {\(m_4\)};

\node[elem] (u02) at (-5.0,5.5) {};
\node[elem] (u03) at (-2.5,5.5) {};
\node[elem] (u13) at (0.0,5.5)  {};
\node[elem] (u14) at (2.5,5.5)  {};
\node[elem] (u24) at (5.0,5.5)  {};

\node[lab] at (-5.7,5.8) {\(u_{02}\)};
\node[lab] at (-2.9,5.9) {\(u_{03}\)};
\node[lab] at (0.7,5.9)  {\(u_{13}\)};
\node[lab] at (2.9,5.9)  {\(u_{14}\)};
\node[lab] at (5.7,5.8)  {\(u_{24}\)};

\draw[edge] (bot) -- (l01);
\draw[edge] (bot) -- (l12);
\draw[edge] (bot) -- (l23);
\draw[edge] (bot) -- (l34);
\draw[edge] (bot) -- (l40);

\draw[edge] (l01) -- (m0);
\draw[edge] (l01) -- (m1);
\draw[edge] (l12) -- (m1);
\draw[edge] (l12) -- (m2);
\draw[edge] (l23) -- (m2);
\draw[edge] (l23) -- (m3);
\draw[edge] (l34) -- (m3);
\draw[edge] (l34) -- (m4);
\draw[edge] (l40) -- (m4);
\draw[edge] (l40) -- (m0);

\draw[edge] (m0) -- (u02);
\draw[edge] (m0) -- (u03);
\draw[edge] (m1) -- (u13);
\draw[edge] (m1) -- (u14);
\draw[edge] (m2) -- (u02);
\draw[edge] (m2) -- (u24);
\draw[edge] (m3) -- (u03);
\draw[edge] (m3) -- (u13);
\draw[edge] (m4) -- (u14);
\draw[edge] (m4) -- (u24);

\draw[edge] (u02) -- (top);
\draw[edge] (u03) -- (top);
\draw[edge] (u13) -- (top);
\draw[edge] (u14) -- (top);
\draw[edge] (u24) -- (top);

\end{tikzpicture}
\caption{Hasse diagram of the poset \(P_5\).}
\label{fig:five-cycle-hasse}
\end{figure}

Recall that a \emph{module} of a graph is a set $M$ of
vertices such that each vertex outside $M$ is adjacent
either to every point of $M$ or to no point of $M$.
For a map $f\colon X\to Y$, the set $f^{-1}(y)$ is called a \emph{fiber} of $X$.

\begin{lemma}\label{lem:fibers}
A strictly order preserving map $F\colon P(G)\to P(H)$ induces a vertex map $f$
through $F(m_v)=m_{f(v)}$. Whenever $f(v)\ne f(w)$,
\[
 \{v,w\}\in E(G)\quad\Longleftrightarrow\quad
 \{f(v),f(w)\}\in E(H).
\]
Moreover, the fiber $f^{-1}(a)$ is a module of $G$ for each $a\in V(H)$.
\end{lemma}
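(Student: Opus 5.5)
The plan is a rank argument followed by a local analysis of the covers. First we show that \(F\) preserves rank. Then we read off adjacency in \(G\) from the rank-\(1\) and rank-\(3\) points lying below or above a pair of rank-\(2\) points.

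\textbf{Step 1 (ranks and rigidity).} In \(P(G)\) we have \(r(\bot)=0\), \(r(\ell_e)=1\), \(r(m_v)=2\), \(r(u_d)=3\) and \(r(\top)=4\), so \(\htp(P(G))=4\). Every point of \(P(G)\) is rigid in the sense of Section~\ref{sec:interval}:
\begin{itemize}
\item for \(\ell_e\) with \(e=\{v,w\}\), the hypothesis on \(G\) gives a nonedge \(d\ni v\), hence a chain \(\bot<\ell_e<m_v<u_d<\top\);
\item for \(m_v\), choose an edge \(e\ni v\) and a nonedge \(d\ni v\);
\item for \(u_d\), argue symmetrically to the case of \(\ell_e\).
\end{itemize}
Now let \(p\in P(G)\) and fix a \(4\)-step chain \(\bot<\cdots<p<\cdots<\top\) through it. A strictly order preserving \(F\) sends this chain to a chain of \(4\) strict steps in \(P(H)\), which also has height \(4\). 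Hence
\[
\ell(\bot,F(p))\geq r(p)\quad\text{and}\quad \ell(F(p),\top)\geq 4-r(p),
\]
which forces \(F(\bot)=\bot\), \(F(\top)=\top\) and \(r(F(p))=r(p)\). So \(F\) maps rank-\(1\) points to points \(\ell_{e'}\), rank-\(2\) points to points \(m_{a}\), and rank-\(3\) points to points \(u_{d'}\). In particular \(F(m_v)=m_{f(v)}\) defines the vertex map \(f\colon V(G)\to V(H)\).

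\textbf{Step 2 (edges and nonedges).} By \eqref{eq:graph-order}, the point \(\ell_{e'}\) lies below exactly the two points \(m_a\) with \(a\in e'\). Dually, \(u_{d'}\) lies above exactly the two points \(m_a\) with \(a\in d'\).

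Suppose \(\{v,w\}=e\in E(G)\) and \(f(v)\neq f(w)\). Then \(F(\ell_e)=\ell_{e'}\) lies strictly below the two distinct points \(m_{f(v)}\) and \(m_{f(w)}\), so \(e'=\{f(v),f(w)\}\in E(H)\).

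Suppose instead \(\{v,w\}=d\in D(G)\) and \(f(v)\neq f(w)\). The same argument applied to \(F(u_d)\) gives \(\{f(v),f(w)\}\in D(H)\).

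Every pair of distinct vertices is either an edge or a nonedge, and \(E(H)\cap D(H)=\varnothing\). These two implications therefore combine into the stated equivalence.

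\textbf{Step 3 (fibers are modules).} Let \(v,w\in f^{-1}(a)\) and let \(x\notin f^{-1}(a)\), so that \(f(x)\neq a\). By Step 2,
\[
\{x,v\}\in E(G)\iff\{f(x),a\}\in E(H)\iff\{x,w\}\in E(G).
\]
Hence every vertex outside the fiber is adjacent either to all of \(f^{-1}(a)\) or to none of it, so \(f^{-1}(a)\) is a module.

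The only point needing care is Step 1, namely rank preservation. It rests on every point of \(P(G)\) lying on a \(4\)-step chain, which is exactly where the hypothesis that each vertex lies in at least one edge and at least one nonedge is used.
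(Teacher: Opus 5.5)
Your proposal is correct and follows essentially the paper's proof. The paper likewise pushes a chain $\bot<\ell_e<m_v<u_d<\top$ through $F$ to place the images on the $\ell$, $m$, $u$ levels, then uses $\ell_e<m_v,m_w$ (respectively $m_v,m_w<u_d$) to get $\{f(v),f(w)\}\in E(H)$ (respectively $D(H)$), and derives the module property exactly as in your Step 3. Your Step 1 simply spells out, via rigidity and rank preservation, what the paper compresses into ``by the definition of $P(H)$, this forces''.
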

\begin{proof}
For each $v\in V(G)$, by hypothesis there exist $e\in E(G)$
and $d\in D(G)$ such that $v\in e\cap d$. Thus
\[
 \bot<\ell_e<m_v<u_d<\top.
\]
Since $F$ is strictly order preserving, it follows that
\[
 F(\bot)<F(\ell_e)<F(m_v)<F(u_d)<F(\top).
\]
By the definition of $P(H)$, this forces
\[
 F(\ell_e)=\ell_{e'},\qquad F(m_v)=m_{v'},\qquad F(u_d)=u_{d'}
\]
for some $e'\in E(H)$, $v'\in V(H)$, $d'\in D(H)$ with
$v'\in e'\cap d'$. Since the elements $m_{v'}$ are distinct
for distinct $v'\in V(H)$, the vertex $v'$ is uniquely
determined by $F(m_v)$. Hence $f(v):=v'$ is well-defined.

Now suppose $f(v)\ne f(w)$.
If $e=\{v,w\}\in E(G)$, then $\ell_e<m_v$ and $\ell_e<m_w$.
Applying the strict order preservation of $F$, we get
\[
 \ell_{e'}=F(\ell_e)<F(m_v)=m_{f(v)},\qquad
 \ell_{e'}=F(\ell_e)<F(m_w)=m_{f(w)}.
\]
Thus $\{f(v),f(w)\}= e'\in E(H)$.

Conversely, if $d=\{v,w\}\in D(G)$, then $m_v<u_d$ and
$m_w<u_d$. Hence
\[
 m_{f(v)}=F(m_v)<F(u_d)=u_{d'},\qquad
 m_{f(w)}=F(m_w)<F(u_d)=u_{d'}.
\]
Thus $\{f(v),f(w)\} = d'\in D(H)$.
This proves
\[
 \{v,w\}\in E(G)\quad\Longleftrightarrow\quad
 \{f(v),f(w)\}\in E(H).
\]

Finally, fix a vertex $a\in V(H)$. If $f^{-1}(a)\in \{\varnothing,V(G)\}$, then it is trivially a module.
So assume $f^{-1}(a)$ is a nonempty proper subset of $V(G)$.
Fix an arbitrary vertex $z\in V(G)\backslash f^{-1}(a)$.
For every $s\in f^{-1}(a)$, we have $f(s)=a\ne f(z)$, so the
equivalence above gives
\[
 z\text{ is adjacent to }s
 \quad\iff\quad
 f(z)\text{ is adjacent to }a.
\]
Therefore $z$ is adjacent either to every vertex $s$
in $f^{-1}(a)$ or to none of them. Hence $f^{-1}(a)$ is a module of $G$ for each $a\in V(H)$.
\end{proof}

\begin{lemma}\label{lem:cycle-modules}
For each $n\geq 5$, every module of $Z_n$ having at least two vertices
is the whole vertex set.
\end{lemma}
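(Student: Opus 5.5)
Let $M$ be a module of $Z_n$ with $|M|\geq 2$. The plan is to show first that $M$ contains two adjacent vertices, and then that $M$ propagates around the cycle.

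\textbf{Step 1 (two adjacent vertices).} Pick distinct $i,j\in M$. If $i$ and $j$ are adjacent we are done. Otherwise, walk along one of the two arcs of the cycle from $i$ to $j$, and let $i'$ be the last vertex of $M$ before the first vertex $z$ outside $M$ on this arc.
\begin{itemize}
\item If no vertex on the arc lies outside $M$, then the whole arc lies in $M$. It contains at least two consecutive vertices, and these are adjacent.
\item Otherwise $z\notin M$ is adjacent to $i'\in M$. By the module property, $z$ is then adjacent to every vertex of $M$.
\end{itemize}
Since $z$ has only two neighbours, in the second case $M\subseteq N(z)=\{z-1,z+1\}$. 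So $M=\{z-1,z+1\}$, a pair at distance $2$.

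For such a pair, consider the vertex $z+2$. It differs from $z-1$ because $n\geq 5$ (indeed $n\geq 4$ suffices), and it lies outside $M$. It is adjacent to $z+1\in M$. It is not adjacent to $z-1$, since that would require $n=3$. This contradicts the module property. Hence in all cases $M$ contains two adjacent vertices $i,i+1$.

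\textbf{Step 2 (propagation).} Suppose $i,i+1\in M$. If $i+2\notin M$, then $i+2$ is adjacent to $i+1\in M$, so it must also be adjacent to $i$. But $i$ and $i+2$ are not adjacent when $n\geq 4$, a contradiction. Hence $i+2\in M$. Repeating this along the cycle gives $M=V(Z_n)$.

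The only delicate point is the case analysis in Step 1, namely handling a non-adjacent pair. There we must ensure that the witness vertex $z+2$ is genuinely distinct from the vertices of $M$ and is non-adjacent to $z-1$, which is exactly where the hypothesis $n\geq 5$ enters.
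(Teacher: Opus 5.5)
Your proof is correct and rests on the same key contradiction as the paper: a vertex $z\notin M$ adjacent to one point of $M$ must be adjacent to all of $M$, so $M=\{z-1,z+1\}$, and a vertex at distance $2$ from $z$ breaks the module property. The paper assumes $M$ is proper and takes $z$ on an edge leaving $M$, using connectedness; this covers every case at once, so your propagation Step~2 is unnecessary.

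One justification is misstated, though it does no harm for $n\geq5$. The vertex $z+2$ is adjacent to $z-1$ exactly when $3\equiv\pm1\pmod n$, i.e.\ when $n=4$, not $n=3$ (for $n=3$ the two vertices coincide). Indeed $\{z-1,z+1\}$ really is a module of $Z_4$, so this non-adjacency, not the distinctness, is precisely where $n\geq5$ is needed.
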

\begin{proof}
Suppose a module $M$ is proper and has at least two vertices.
Connectedness gives an edge from $M$ to its complement;
write $z$ for its endpoint outside $M$. The module property
makes $z$ adjacent to every point of $M$. Its degree is two,
so $M=\{z-1,z+1\}$. The vertex $z-2$ is outside this set
for $n\geq5$. It is adjacent to $z-1$ and is not adjacent
to $z+1$, since a difference of three is neither $1$ nor
$-1$ modulo $n\geq5$. This contradicts the module property.
\end{proof}

\begin{lemma}\label{lem:cycle-maps}
For any $m,n\geq 5$, a strictly order preserving map $P_n\to P_m$ is either constant on the vertex level or injective on that level.
The injective case forces $m=n$. In particular, for $m\neq n$ every such map collapses the whole vertex level.
\end{lemma}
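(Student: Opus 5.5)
Let \(F\colon P_n\to P_m\) be strictly order preserving. By Lemma~\ref{lem:fibers}, \(F\) induces a vertex map \(f\colon V(Z_n)\to V(Z_m)\) with \(F(m_v)=m_{f(v)}\), and each fiber \(f^{-1}(a)\) is a module of \(Z_n\). By Lemma~\ref{lem:cycle-modules}, every such fiber is empty, a singleton, or all of \(V(Z_n)\). If some fiber is \(V(Z_n)\), then \(f\) is constant, so \(F\) collapses the vertex level. Otherwise every nonempty fiber is a singleton, and \(f\) is injective. This gives the dichotomy.

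Now suppose \(f\) is injective. Then \(f(v)\neq f(w)\) for all \(v\neq w\), so Lemma~\ref{lem:fibers} gives
\[
\{v,w\}\in E(Z_n)\iff \{f(v),f(w)\}\in E(Z_m).
\]
Hence \(f\) is an isomorphism of \(Z_n\) onto the induced subgraph of \(Z_m\) on \(f(V(Z_n))\). We have \(n\le m\), and \(Z_n\) is an induced subgraph of \(Z_m\).

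It remains to show that an induced subgraph of \(Z_m\) isomorphic to \(Z_n\) must be all of \(Z_m\). Every vertex in the image has degree two inside the image. Both of its \(Z_m\)-neighbours therefore lie in the image, so the image is closed under taking neighbours in \(Z_m\). Since \(Z_m\) is connected, the image is all of \(V(Z_m)\). Alternatively, a proper induced subgraph of a cycle is a disjoint union of paths and so contains no cycle. Either way \(n=m\).

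For \(m\neq n\), the injective case is impossible, so every such map is constant on the vertex level. The main step is the module argument, and it is already supplied by Lemmas~\ref{lem:fibers} and~\ref{lem:cycle-modules}. The one point to check is that the edge/non-edge equivalence in Lemma~\ref{lem:fibers} requires \(f(v)\neq f(w)\). Injectivity gives this for every pair of distinct vertices, so \(f\) really is an induced embedding.
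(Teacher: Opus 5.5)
Your proposal is correct and follows essentially the same route as the paper. You use Lemma~\ref{lem:fibers} to show the fibers are modules, and Lemma~\ref{lem:cycle-modules} to conclude they are trivial, which gives the constant/injective dichotomy; the induced-embedding argument then forces \(n=m\). Your neighbour-closure and connectivity argument simply spells out the paper's one-line remark that the only cycle in \(Z_m\) is \(Z_m\) itself.
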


\begin{proof}
If the vertex map is nonconstant, its nonempty fibers are proper modules by Lemma~\ref{lem:fibers}.
They are singletons by Lemma~\ref{lem:cycle-modules}, so the map is injective.
Lemma~\ref{lem:fibers} then shows that $f$ is a graph embedding
and that its image induces a subgraph isomorphic to $Z_n$.
Hence $Z_m$ contains a cycle of length $n$.
Since the only cycle in $Z_m$ is $Z_m$ itself, we get $n=m$.
\end{proof}

Next, we introduce the ai-semiring terms induced by the cycle graph
\(Z_n\). First, write $V_n=V(Z_n)$, $E_n=E(Z_n)$ and $D_n=D(Z_n)$.
For each triple \((v,e,d)\in V_n\times E_n\times D_n\) with
\(v\in e\cap d\), We introduce pairwise distinct variables
\[
a_e,\quad b_{e,v},\quad c_{v,d},\quad d_d,\quad
\]
where $(v,e,d)\in V_n\times E_n\times D_n,~ v\in e\cap d$.
These four families name the four kinds of
cover relations in \eqref{eq:graph-order}. Set
\begin{equation}\label{eq:tn}
 \bt_{P_n}=
 \sum_{\substack{(v,e,d)\in V_n\times E_n\times D_n\\ v\in e\cap d}}
 a_e b_{e,v}c_{v,d}d_d,
\end{equation}
where each vertex lies in exactly two edges and \(n-3\) nonedges, so it
contributes \(2(n-3)\) summands. Hence \(\bt_{P_n}\) has exactly
\(2n(n-3)\) summands.

Take $e_* =\{0,1\}\in E_n$ and $d_* =\{1,3\}\in D_n$, and define an identity
\begin{equation}\label{eq:sigma}
 \sigma_n:~\bt_{P_n}\approx \bt_{P_n}+\bw_n.
\end{equation}
where $\bw_n=a_{e_*}b_{e_*,0}c_{1,d_*}d_{d_*}\notin \bt_{P_n}$.

\begin{lemma}\label{lem:recovery}
Let $Q$ be a poset and let $\varphi\colon c(\bt_{P_n})\to I(Q)$ be an arbitrary assignment.
If $\varphi(\bt_{P_n})=(a,b)\ne0$, then there is a strictly order preserving map $f\colon P_n\to Q$ with
$f(\bot)=a$, $f(\top)=b$, and, for every triple $(v,e,d)\in V_n\times E_n\times D_n$ with $v\in e\cap d$,
\begin{equation}\label{formula4}
\begin{aligned}
 \varphi(a_e)=(f(\bot),f(\ell_e)),\qquad
 \varphi(b_{e,v})=(f(\ell_e),f(m_v)),\\
 \varphi(c_{v,d})=(f(m_v),f(u_d)),\qquad
 \varphi(d_d)=(f(u_d),f(\top)).
\end{aligned}
\end{equation}
Conversely, every strictly order preserving map $f\colon P_n\to Q$ defines an
assignment $\varphi$ in $I(Q)$ by these four formulas~\eqref{formula4}, and
then $\varphi(\bt_{P_n})=(f(\bot),f(\top))\ne0$.
\end{lemma}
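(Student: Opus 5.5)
The plan is to handle the forward direction summand by summand, and then glue the local data into one map on $P_n$ using the structure of $I(Q)$.

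Since $I(Q)$ is flat and $\varphi(\bt_{P_n})=(a,b)\neq0$, every summand $a_eb_{e,v}c_{v,d}d_d$ must evaluate to exactly $(a,b)$. A sum of distinct values, or a sum containing $0$, is $0$ under the flat addition. A product of four elements of $I(Q)$ is nonzero only when the four factors are nonzero intervals that chain. So for each admissible triple $(v,e,d)$ there are points $a<p<q<r<b$ of $Q$ with
\[
\varphi(a_e)=(a,p),\quad \varphi(b_{e,v})=(p,q),\quad \varphi(c_{v,d})=(q,r),\quad \varphi(d_d)=(r,b).
\]
The points $p$, $q$, $r$ are read off from the values of the individual variables. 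Thus $p$ depends only on $e$, since it is the right endpoint of $\varphi(a_e)$. Likewise $r$ depends only on $d$, as the left endpoint of $\varphi(d_d)$.

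The one real point to check is that $q$ depends only on $v$. For a given $v$, $q$ is the right endpoint of $\varphi(b_{e,v})$ and also the left endpoint of $\varphi(c_{v,d})$. Every summand containing $v$ involves some $c_{v,d}$, and the left endpoint of $\varphi(c_{v,d})$ is a single point once $d$ is fixed. Any two summands with the same $v$, say with data $(e,d)$ and $(e',d')$, are connected through the summand $(v,e,d')$, which exists because $v\in e\cap d'$. That summand forces
\[
\text{right endpoint of }\varphi(b_{e,v}) \;=\; \text{left endpoint of }\varphi(c_{v,d'}).
\]
Hence all summands with the same $v$ give the same $q$. We then define
\[
f(\bot)=a,\quad f(\top)=b,\quad f(\ell_e)=p_e,\quad f(m_v)=q_v,\quad f(u_d)=r_d.
\]
This $f$ is well defined because each element of $P_n$ occurs in some summand: every vertex lies in an edge and a nonedge, every edge has a vertex and that vertex lies in a nonedge, and dually for nonedges.

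Strict order preservation only needs to be checked on the generating covers in \eqref{eq:graph-order}, because a strict order generated by covers is preserved once the covers are. Each cover $\bot<\ell_e$, $\ell_e<m_v$, $m_v<u_d$ or $u_d<\top$ with $v\in e\cap d$ appears inside some summand, where the chain $a<p<q<r<b$ gives the required strict inequality. The formulas \eqref{formula4} then hold by construction.

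For the converse, a strictly order preserving $f$ makes the four formulas define nonzero intervals. Each summand multiplies to $(f(\bot),f(\top))$, since consecutive endpoints match, so all summands are equal and their sum is that common nonzero value. The main care throughout is the well-definedness of $f(m_v)$ just described, which is the only step that uses the connectivity supplied by the summands.
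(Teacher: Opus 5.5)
Your proof is correct and follows essentially the same route as the paper. Flat addition forces every summand to equal $(a,b)$, and the endpoints of the variable values give $f$ on $\bot,\top,\ell_e,u_d$. The middle point is independent of $(e,d)$ because every combination $(v,e,d')$ with $v\in e\cap d'$ is itself a summand, and strictness is checked on the generating relations; your one-step linking argument for $f(m_v)$ is slightly cleaner than the paper's ``fix $d$, then fix $e$'' version.
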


\begin{proof}
By the flat addition law, a nonempty finite sum is nonzero
if and only if all its summands are equal to the same nonzero element.
Hence, there exists $a,b\in Q$ with $a<b$ such that
\[
 \varphi(a_e b_{e,v}c_{v,d}d_d)=(a,b)
\]
for every triple $(v,e,d)\in V_n\times E_n\times D_n$ with $v\in e\cap d$. Since every
variable occurs in at least one such product, every variable
has nonzero value.

Define, for each $e\in E_n$,
\[
 \varphi(a_e)=(a,L_e).
\]
This is well defined because $a$ is the left endpoint of
$\varphi(a_e)$: the factor $a_e$ is the first factor of a
product of value $(a,b)$, and interval multiplication forces
endpoints to match.

Similarly, for each $d\in D_n$,
\[
 \varphi(d_d)=(U_d,b).
\]

Fix $v\in V(Z_n)$ and choose $e\in E_n$, $d\in D_n$ with
$v\in e\cap d$. Since
\[
 \varphi(a_e b_{e,v}c_{v,d}d_d)=(a,b),
\]
the four factors compose, so
\[
 \varphi(b_{e,v})=(L_e,M_{v,e,d}),\qquad
 \varphi(c_{v,d})=(M_{v,e,d},U_d)
\]
for some $M_{v,e,d}\in Q$.

We claim that $M_{v,e,d}$ is independent of $e$ and $d$.
Fix $d\in D_n$ with $v\in d$. For any two edges
$e,e'\in E_n$ containing $v$, both
$\varphi(b_{e,v})$ and $\varphi(b_{e',v})$ start at
$a$, have terminal endpoint in $\varphi(c_{v,d})$, and
the product with $\varphi(c_{v,d})$ is nonzero. Hence
their terminal endpoints coincide. Thus $M_{v,e,d}$
is independent of $e$. The same argument with $e$ fixed
shows independence of $d$. Therefore there is a unique
$M_v\in Q$ such that
\[
 \varphi(b_{e,v})=(L_e,M_v),\qquad
 \varphi(c_{v,d})=(M_v,U_d)
\]
for all incident $e,d$.

Now define
\[
 f(\bot)=a,\qquad f(\ell_e)=L_e,\qquad f(m_v)=M_v,
\]
\[
 f(u_d)=U_d,\qquad f(\top)=b.
\]
If $\bot<\ell_e<m_v<u_d<\top$ is a generating comparison
in $P_n$, then the corresponding variable values are strict
intervals:
\[
 (a,L_e),\quad (L_e,M_v),\quad (M_v,U_d),\quad (U_d,b),
\]
so $a<L_e<M_v<U_d<b$.
Hence \(f\) is strictly order preserving on generating comparisons, and therefore on all of \(P_n\) by transitivity.
The displayed formulas for $\varphi$ follow by construction.

Conversely, let $f\colon P_n\to Q$ be a strictly order preserving map. Define
$\varphi$ on the variables by
\[
 \varphi(a_e)=(f(\bot),f(\ell_e)),\qquad
 \varphi(b_{e,v})=(f(\ell_e),f(m_v)),
\]
\[
 \varphi(c_{v,d})=(f(m_v),f(u_d)),\qquad
 \varphi(d_d)=(f(u_d),f(\top)).
\]
For every triple $(v,e,d)$ with $v\in e\cap d$, strict order preservation
of $f$ gives
\[
 f(\bot)<f(\ell_e)<f(m_v)<f(u_d)<f(\top),
\]
so the four factors compose and
\[
 \varphi(a_e b_{e,v}c_{v,d}d_d)
 =(f(\bot),f(\top))\ne0.
\]
Thus every summand of $\varphi(\bt_{P_n})$ equals
$(f(\bot),f(\top))$, and flat addition gives
\[
 \varphi(\bt_{P_n})=(f(\bot),f(\top))\ne0.
\]
Neither direction uses injectivity of $f$, distinctness of
the variable values, or finiteness of $Q$.
\end{proof}

\begin{lemma}\label{lem:criterion}
For every poset \(Q\) and every \(n\geq 5\),
the interval semiring $I(Q)$ satisfies $\sigma_n$ if and only if
 $f(m_0)=f(m_1)$ for every strictly order preserving map $f\colon P_n\to Q$.
\end{lemma}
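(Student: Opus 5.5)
We reduce the criterion to Lemma~\ref{lem:flatuuq} combined with Lemma~\ref{lem:recovery}. Since \(I(Q)\) is a flat semiring, Lemma~\ref{lem:flatuuq} says that \(I(Q)\vDash\sigma_n\) if and only if \(\varphi(\bw_n)=\varphi(\bt_{P_n})\) for every assignment \(\varphi\) of the variables of \(c(\bt_{P_n})\) with \(\varphi(\bt_{P_n})\ne0\). Here \(c(\bw_n)\subseteq c(\bt_{P_n})\), because \(0\in e_*\), \(1\in e_*\cap d_*\), and hence the variables \(a_{e_*}\), \(b_{e_*,0}\), \(c_{1,d_*}\), \(d_{d_*}\) all occur in \(\bt_{P_n}\). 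By Lemma~\ref{lem:recovery}, such assignments correspond exactly to strictly order preserving maps \(f\colon P_n\to Q\) via the formulas~\eqref{formula4}. It therefore suffices to compute \(\varphi(\bw_n)\) in terms of \(f\).

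Let \(f\) be strictly order preserving and let \(\varphi\) be the associated assignment. Then
\[
\varphi(\bw_n)=(f(\bot),f(\ell_{e_*}))\,(f(\ell_{e_*}),f(m_0))\,(f(m_1),f(u_{d_*}))\,(f(u_{d_*}),f(\top)).
\]
By the multiplication rule in \(I(Q)\), all four factors are nonzero intervals, and consecutive factors match except possibly at the middle junction. If \(f(m_0)=f(m_1)\), the product is \((f(\bot),f(\top))=\varphi(\bt_{P_n})\). If \(f(m_0)\ne f(m_1)\), the middle product vanishes, so \(\varphi(\bw_n)=0\ne\varphi(\bt_{P_n})\).

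Both directions now follow. Suppose \(f(m_0)=f(m_1)\) for every strictly order preserving \(f\). Then every assignment with \(\varphi(\bt_{P_n})\ne0\) comes from such an \(f\) by the first part of Lemma~\ref{lem:recovery}, so \(\varphi(\bw_n)=\varphi(\bt_{P_n})\) and \(\sigma_n\) holds. Conversely, suppose some \(f\) has \(f(m_0)\ne f(m_1)\). The converse part of Lemma~\ref{lem:recovery} gives an assignment with \(\varphi(\bt_{P_n})\ne0\) but \(\varphi(\bw_n)=0\), which is a failure of \(\sigma_n\).

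There is no real obstacle here. The only point needing care is that, in the forward direction, every relevant assignment genuinely arises from a map \(f\) with the stated endpoints; this is exactly what the first half of Lemma~\ref{lem:recovery} provides.
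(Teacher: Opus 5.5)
Your proposal is correct and follows essentially the same route as the paper. Both reduce to assignments with $\varphi(\bt_{P_n})\neq 0$ (you cite Lemma~\ref{lem:flatuuq}, while the paper argues this inline), then use both halves of Lemma~\ref{lem:recovery} to get $\varphi(\bw_n)=(f(\bot),f(m_0))\cdot(f(m_1),f(\top))$.

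One minor nit: $b_{e_*,0}$ occurs in $\bt_{P_n}$ because $0$ also lies in some nonedge such as $\{0,2\}$, not merely because $0\in e_*$.
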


\begin{proof}
Let \(\varphi\colon c(\bt_{P_n})\to I(Q)\) be an arbitrary assignment.
If \(\varphi(\bt_{P_n})=0\), then
\[
 \varphi(\bt_{P_n})=0=\varphi(\bt_{P_n}+\bw_n),
\]
since \(0+s=0\) for all \(s\in I(Q)\). Therefore, \(I(Q)\vDash\sigma_n\) if and only if
\(\varphi(\bt_{P_n}) = \varphi(\bw_n)\) for every assignment \(\varphi\) with
\(\varphi(\bt_{P_n})\neq 0\).

Now suppose that \(\varphi(\bt_{P_n})\neq 0\).
By Lemma~\ref{lem:recovery}, there is a strictly order preserving map \(f\colon P_n\to Q\)
induced by \(\varphi\) such that
\[
 \varphi(\bt_{P_n})=(f(\bot),f(\top)),
\]
and such that
\[
 \varphi(a_{e_*}b_{e_*,0}) = (f(\bot),f(m_0)),
 \qquad
 \varphi(c_{1,d_*}d_{d_*}) = (f(m_1),f(\top)).
\]

Consequently, recall that \(\bw_n = a_{e_*}b_{e_*,0}c_{1,d_*}d_{d_*}\), we have
\begin{equation}\label{eqwn=tpn}
 \varphi(\bw_n)
 =(f(\bot),f(m_0))\cdot(f(m_1),f(\top))
 =
 \begin{cases}
  \varphi(\bt_{P_n}), & f(m_0)=f(m_1),\\
  0, & f(m_0)\ne f(m_1).
 \end{cases}
\end{equation}
Conversely, by Lemma~\ref{lem:recovery} again, every strictly order preserving map \(f\colon P_n\to Q\)
induces an assignment \(\varphi\) such that
\(\varphi(\bt_{P_n}) = (f(\bot),f(\top))\), and~\eqref{eqwn=tpn} holds.
In summary, \(I(Q)\vDash\sigma_n\) if and only if
\(f(m_0)=f(m_1)\) for every strictly order preserving map \(f\colon P_n\to Q\).
\end{proof}

\begin{lemma}\label{lem:independence}
For all \(m,n\geq 5\), $I(P_m)\vDash\sigma_n$ if and only if $m\ne n$.
\end{lemma}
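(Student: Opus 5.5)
By Lemma~\ref{lem:criterion}, the statement reduces to a question about strictly order preserving maps: $I(P_m)\vDash\sigma_n$ if and only if every strictly order preserving map $f\colon P_n\to P_m$ satisfies $f(m_0)=f(m_1)$. I will treat the two directions separately, using Lemmas~\ref{lem:fibers} and~\ref{lem:cycle-maps} to control such maps.

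Suppose first that $m\neq n$, and let $F\colon P_n\to P_m$ be strictly order preserving. Lemma~\ref{lem:fibers} gives an induced vertex map $f\colon V_n\to V_m$ with $F(m_v)=m_{f(v)}$. The proof of that lemma also shows that $F$ sends each vertex-level element to a vertex-level element, so this is well defined. By Lemma~\ref{lem:cycle-maps}, since $m\neq n$, the vertex map is constant. Hence $F(m_0)=F(m_1)$, and Lemma~\ref{lem:criterion} gives $I(P_m)\vDash\sigma_n$.

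Now suppose $m=n$. It suffices to exhibit one strictly order preserving map $P_n\to P_n$ with $f(m_0)\neq f(m_1)$, and the identity map does this, since $m_0\neq m_1$. Lemma~\ref{lem:criterion} then gives $I(P_n)\not\vDash\sigma_n$.

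There is no real obstacle here. The substantive work is already contained in Lemmas~\ref{lem:fibers}, \ref{lem:cycle-modules} and~\ref{lem:cycle-maps}. The one point to check is that Lemma~\ref{lem:cycle-maps} applies to an arbitrary strictly order preserving map, so that every such map $P_n\to P_m$ with $m\neq n$ collapses the vertex level; in particular it identifies $m_0$ and $m_1$, which is exactly what the criterion requires.
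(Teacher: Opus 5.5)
Your proposal is correct and takes essentially the same approach as the paper. You reduce to Lemma~\ref{lem:criterion}; when $m=n$, the identity map on $P_n$ witnesses failure of $\sigma_n$; and when $m\neq n$, Lemma~\ref{lem:cycle-maps} forces every strictly order preserving map $P_n\to P_m$ to collapse the vertex level, so $m_0$ and $m_1$ are identified.
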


\begin{proof}
Suppose first that \(m=n\). Then the identity map
\(f\colon P_n\to P_n\) is clearly strictly order preserving. Since
\(f(m_0)\neq f(m_1)\), it follows from Lemma~\ref{lem:criterion} that
\(P_n\nvDash\sigma_n\).

Now suppose that \(m\ne n\). By Lemma~\ref{lem:cycle-maps}, every strictly order preserving map \(f\colon P_n\to P_m\) collapses the vertex level; in particular,
\(f(m_0)=f(m_1)\). Applying Lemma~\ref{lem:criterion} once more, we
obtain \(I(P_m)\vDash\sigma_n\). This completes the proof.
\end{proof}

Recall from Proposition~\ref{prop:seven} that, for every bounded poset
\(P\) of height at most \(3\), the variety \(\mathsf V(I(P))\) is equal
to the variety generated by the interval semirings of those bounded
subposets of \(P\) that are isomorphic to one of seven fixed bounded
posets. The following proposition shows that this conclusion does not
extend to height \(4\).

\begin{proposition}\label{prop:no-bound}
There is no positive integer $k$ such that every
bounded poset $P$ of height at most $4$ satisfies
\[
\Var(I(P)) = \mathsf{V}\{I(Q)\mid Q\text{ is a subposet of $P$ with $|Q|\leq k$}\}.
\]
\end{proposition}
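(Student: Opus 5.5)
We argue by contradiction. Suppose some $k$ works, and choose $n\geq 5$ with $|P_n| = 2+n+n+n(n-3)/2 > k$; in fact we only need $n$ large enough that every subposet $Q$ of $P_n$ with $|Q|\leq k$ is a proper subposet, so it misses some point or cover of $P_n$. The poset $P_n$ is bounded of height $4$, so by hypothesis
\[
\Var(I(P_n))=\mathsf{V}\{I(Q)\mid Q\text{ a subposet of }P_n,\ |Q|\leq k\}.
\]
By Lemma~\ref{lem:independence}, $I(P_n)\nvDash\sigma_n$. It therefore suffices to show that every $I(Q)$ with $|Q|\leq k$ satisfies $\sigma_n$; this contradicts the displayed equality.

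By Lemma~\ref{lem:criterion}, $I(Q)\vDash\sigma_n$ holds exactly when every strictly order preserving map $f\colon P_n\to Q$ satisfies $f(m_0)=f(m_1)$. Suppose instead that some such $f$ has $f(m_0)\neq f(m_1)$. Since $Q$ is a subposet of $P_n$, the composite $P_n\to Q\to P_n$ (inclusion second) is also strictly order preserving, because $\leq_Q\subseteq\leq_{P_n}$. Call this composite $F$. It still satisfies $F(m_0)\neq F(m_1)$, so its vertex map is nonconstant.

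By Lemma~\ref{lem:cycle-maps} with $m=n$, the vertex map of $F$ is then injective and is a graph embedding $Z_n\to Z_n$, hence an automorphism of the cycle. Next we show $F$ is injective on all of $P_n$. By the proof of Lemma~\ref{lem:fibers}, $F$ preserves levels: $F(\ell_e)=\ell_{e'}$, $F(m_v)=m_{f(v)}$, $F(u_d)=u_{d'}$, and $F$ fixes $\bot,\top$. Moreover, $\ell_e<m_v,m_w$ for $e=\{v,w\}$ forces $F(\ell_e)=\ell_{\{f(v),f(w)\}}$, and similarly for $u_d$. So $F$ is the poset automorphism induced by $f$, and in particular $F$ is injective.

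This gives $|P_n|=|F(P_n)|\leq |Q|\leq k$, contradicting the choice of $n$. Hence no strictly order preserving $P_n\to Q$ separates $m_0$ and $m_1$, so $I(Q)\vDash\sigma_n$ for every such $Q$, which completes the contradiction.

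The main obstacle is making sure the composite $F$ is strictly order preserving when $Q$ is merely a subposet rather than an induced one. Because a subposet's order is contained in that of $P_n$, this holds. A secondary point is checking that level preservation in Lemma~\ref{lem:fibers} applies to maps $P_n\to P_n$ without extra hypotheses; it does, since each element of $P_n$ lies on a $4$-step chain.
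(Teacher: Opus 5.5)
Your proof is correct and follows the paper's argument: compose with the inclusion $Q\hookrightarrow P_n$, use Lemma~\ref{lem:cycle-maps} to get injectivity on the vertex level, and conclude via Lemmas~\ref{lem:criterion} and~\ref{lem:independence}. The only difference is that you upgrade to full injectivity of the composite to get $|Q|\geq |P_n|$, whereas the paper notes that injectivity on the $n$ vertex points already forces $|Q|\geq n$ and so simply takes $n=k+5$; your extra step is valid but unnecessary.
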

\begin{proof}
Suppose on the contrary that there is an integer $k\geq 1$ such that
\[
\Var(I(P)) = \mathsf{V}\{I(Q)\mid Q\text{ is a subposet of $P$ with $|Q|\leq k$}\}
\]
for each bounded poset $P$ of height at most four.
Take $n=k+5$ and $P=P_n$.
For each subposet $Q\subseteq P_n$ with at most
$k$ points, compose a strictly order preserving map $P_n\to Q$ with
the inclusion $Q\hookrightarrow P_n$. By
Lemma~\ref{lem:cycle-maps} the composite is constant
or injective on the $n$ vertex points. Injectivity would
require $|Q|\geq n>k$, so only the constant case is
possible. The inclusion is injective, and hence every
strictly order preserving map from $P_n$ into $Q$ collapses $m_0$ and $m_1$.
Lemma~\ref{lem:criterion} gives $I(Q)\vDash\sigma_n$.
 Its proposed
member $I(P_n)$ fails that identity by
Lemma~\ref{lem:independence}. This disproves the
bound $k$.
\end{proof}

\begin{proposition}\label{prop:k=4}
The interval \([\mathcal V_4,\mathcal B_4]\) contains continuum many subvarieties.
\end{proposition}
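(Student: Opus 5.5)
For each subset \(T\subseteq\{n\in\mathbb N\mid n\geq 5\}\) I will set
\[
 \mathcal W_T=\mathcal V_4\vee\mathsf V(I(P_n)\mid n\in T),
\]
and show that \(T\mapsto\mathcal W_T\) is injective with values in \([\mathcal V_4,\mathcal B_4]\). Since there are continuum many such \(T\), this proves the proposition.

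Containment comes first. Each \(P_n\) is a bounded poset of height exactly \(4\): the generating comparisons \eqref{eq:graph-order} give \(5\)-element chains \(\bot<\ell_e<m_v<u_d<\top\), and nothing longer. By Lemma~\ref{lem:sibk}, \(I(P_n)\in\mathcal B_4\). Also \(\mathcal V_4\subseteq\mathcal B_4\) by Proposition~\ref{prop:sandwich}. Hence \(\mathcal V_4\subseteq\mathcal W_T\subseteq\mathcal B_4\).

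Injectivity will be done by separating identities. Let \(T\neq T'\), say \(n\in T\setminus T'\). I claim \(\sigma_n\) holds in \(\mathcal W_{T'}\) but fails in \(\mathcal W_T\).
\begin{itemize}
\item Failure in \(\mathcal W_T\) is Lemma~\ref{lem:independence}: \(I(P_n)\nvDash\sigma_n\).
\item For \(\mathcal W_{T'}\) it suffices to check generators. For \(m\in T'\) we have \(m\neq n\), so \(I(P_m)\vDash\sigma_n\) by Lemma~\ref{lem:independence}.
\item It remains to show \(I(C_4)\vDash\sigma_n\). By Lemma~\ref{lem:criterion}, this means every strictly order preserving map \(f\colon P_n\to C_4\) satisfies \(f(m_0)=f(m_1)\). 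Any such \(f\) sends each maximal chain \(\bot<\ell_e<m_v<u_d<\top\) to a strictly increasing \(5\)-element sequence in the \(5\)-element chain \(C_4\). So \(f\) is the rank map, and in particular \(f(m_v)=2\) for every vertex \(v\). Thus \(f(m_0)=f(m_1)\).
\end{itemize}
It follows that \(\mathcal W_T\neq\mathcal W_{T'}\).

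The only point needing care is the rank computation, namely that every element of \(P_n\) lies on a maximal \(4\)-step chain. This holds because each vertex of \(Z_n\) lies in some edge and some nonedge, each \(\ell_e\) lies below some \(m_v\), and each \(u_d\) lies above some \(m_v\). Otherwise the argument is an assembly of the preceding lemmas, so I expect no serious obstacle.
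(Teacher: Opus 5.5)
Your proof is correct and follows the paper's argument: you use the same family \(I(P_n)\), the same separating identities \(\sigma_n\), and Lemma~\ref{lem:independence} to distinguish the varieties indexed by distinct subsets. The only difference is that you join with \(\mathcal V_4\) explicitly and check \(I(C_4)\vDash\sigma_n\) by the rank-map argument, whereas the paper notes that \(I(C_4)\) is already a subsemiring of each \(I(P_n)\), so \(\mathcal V_4\subseteq\mathsf V(I(P_n)\mid n\in X)\) holds automatically for nonempty \(X\); your version is equally valid and also covers \(T=\varnothing\).
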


\begin{proof}
Let \(J=\{5,6,7,\dots\}\). For each nonempty subset \(X\) of \(J\), define the variety
\[
\mathcal V_X=\mathsf V(I(P_n)\mid n\in X).
\]
Since \(\htp(P_n)=4\), Lemma~\ref{lem:sibk} implies that every
\(I(P_n)\) contains \(I(C_4)\) as a subsemiring and lies in the variety
\(\mathcal B_4\).
Thus \(\mathcal V_X\) lies in the interval \([\mathcal V_4,\mathcal B_4]\) for every nonempty subset \(X\) of \(J\).

Now suppose that \(X_1\) and \(X_2\) are distinct nonempty subsets of \(J\).
Without loss of generality, assume that \(n\in X_1\setminus X_2\).
Then, by Lemma~\ref{lem:independence}, \(\mathcal V_{X_2}\) satisfies \(\sigma_n\), whereas \(\mathcal V_{X_1}\) does not.
This shows that distinct nonempty subsets of the countably infinite set \(J\) determine distinct subvarieties in the interval \([\mathcal V_4,\mathcal B_4]\).
Hence the interval \([\mathcal V_4,\mathcal B_4]\) contains continuum many subvarieties.
\end{proof}

\begin{proposition}\label{prop:intervalk}
For every \(k\geq 4\), the interval \([\V_k,\B_k]\) contains continuum many
subvarieties.
\end{proposition}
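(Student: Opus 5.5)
We generalise Proposition~\ref{prop:k=4} from height \(4\) to height \(k\) by stretching the posets \(P_n\). For \(k\geq 4\) and \(n\geq 5\), let \(P_n^{(k)}\) be the ordinal sum of a \((k-4)\)-step chain with \(P_n\). Concretely, add new points \(c_1<\cdots<c_{k-4}\), declare \(\bot<c_1\), and declare \(c_{k-4}<\ell_e\) for every \(e\in E_n\). Then every maximal chain of \(P_n\) is lengthened by \(k-4\), so \(\htp(P_n^{(k)})=k\) and \(P_n^{(k)}\) is bounded. For \(X\subseteq J=\{5,6,\dots\}\) nonempty we put
\[
\V_X^{(k)}=\mathsf V\bigl(I(P_n^{(k)})\mid n\in X\bigr).
\]
Each \(I(P_n^{(k)})\) lies in \(\B_k\) by Lemma~\ref{lem:sibk}, and it contains \(I(C_k)\) as a subsemiring because it contains a \(k\)-step chain. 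Hence \(\V_X^{(k)}\in[\V_k,\B_k]\).

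To separate these varieties we modify \(\sigma_n\). Prefix every summand of \(\bt_{P_n}\) and the word \(\bw_n\) with a common product \(g_1\cdots g_{k-4}\) of fresh variables, and replace each \(a_e\) by a variable naming the cover \(c_{k-4}<\ell_e\). This yields a term \(\bt^{(k)}_n\) and an identity \(\sigma^{(k)}_n\colon \bt^{(k)}_n\approx\bt^{(k)}_n+\bw^{(k)}_n\). The first factor \(g_1\) names the cover \(\bot<c_1\).

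The argument of Lemma~\ref{lem:recovery} carries over verbatim. All summands of \(\bt^{(k)}_n\) share the prefix \(g_1\cdots g_{k-4}\), so a nonzero evaluation in \(I(Q)\) determines a strictly order preserving map \(P_n^{(k)}\to Q\), and conversely. As in Lemma~\ref{lem:criterion}, we conclude that \(I(Q)\vDash\sigma^{(k)}_n\) if and only if every strictly order preserving map \(f\colon P_n^{(k)}\to Q\) satisfies \(f(m_0)=f(m_1)\).

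It remains to establish the analogue of Lemma~\ref{lem:independence}:
\[
I(P_m^{(k)})\vDash\sigma^{(k)}_n \quad\Longleftrightarrow\quad m\neq n.
\]
For \(m=n\) the identity map shows that \(\sigma^{(k)}_n\) fails. For \(m\neq n\) we need the analogue of Lemma~\ref{lem:fibers}, and this is the step requiring care.

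Every point \(m_v\) of \(P_n^{(k)}\) lies on a maximal chain of length \(k\), at rank \(k-2\), with exactly one element above it before \(\top\). A strictly order preserving map into a poset of height \(k\) must send a \(k\)-step chain to a \(k\)-step chain. The image of \(\bot\) is therefore forced to be \(\bot\) (as the only point of rank \(0\)), and ranks are preserved along such chains. Consequently \(F(m_v)\) has rank \(k-2\) in \(P_m^{(k)}\), so \(F(m_v)=m_{v'}\) for some \(v'\). Likewise \(F(\ell_e)\) is some \(\ell_{e'}\) and \(F(u_d)\) is some \(u_{d'}\).

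With these rank facts, the proofs of Lemma~\ref{lem:fibers} and Lemma~\ref{lem:cycle-maps} go through unchanged, since they only use covers among the \(\ell\), \(m\), \(u\) levels. Hence the vertex map is constant when \(m\neq n\), so \(f(m_0)=f(m_1)\) and \(\sigma^{(k)}_n\) holds in \(I(P_m^{(k)})\).

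Finally, the counting argument of Proposition~\ref{prop:k=4} applies. Distinct nonempty subsets \(X_1\neq X_2\) of \(J\) are distinguished by \(\sigma^{(k)}_n\) for any \(n\in X_1\setminus X_2\). This gives continuum many distinct varieties in \([\V_k,\B_k]\).
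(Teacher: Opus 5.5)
Your proposal is correct and follows essentially the paper's route: put a chain below the $\ell$-level of $P_n$, prefix $\sigma_n$ by a product of fresh variables, and use rank preservation to reduce to Lemmas~\ref{lem:fibers} and~\ref{lem:cycle-maps}. Your version is the more careful one. Your prefix has the $k-4$ letters needed for every summand to have length $k$, whereas the paper's $Q_r=x_1\cdots x_r$ with $r=k-5$ is one letter short, which would leave a level of slack and break the rank argument. You also spell out the rank-preservation step that the paper only asserts.
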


\begin{proof}
The proof is entirely analogous in the case $k=4$. By Proposition~\ref{prop:k=4}, assume \(k\geq 5\) and put
\(r=k-5\). Let \(P_n^{(k)}\) be the ordinal sum
\[
P_n^{(k)}=C_r\oplus P_n,
\]
where \(C_r = \{0<1<\cdots<r\}\) contributes \(r+1 = k-4\) additional steps below \(P_n\) (note that \(r<\bot\)), so that \(P_n^{(k)}\) has height \(k\).
Define the term
\[
 Q_r=x_1x_2\cdots x_r
\]
and the identity
\[
 \sigma_n^{(k)}:\quad
 Q_r\,\bt_{P_n}\approx Q_r(\bt_{P_n}+\bw_n).
\]

Every strictly order preserving map \(P_n^{(k)}\to P_m^{(k)}\) must preserve the lowest
\(k\) points, hence restricts to a strictly order preserving map \(P_n\to P_m\) on the
copy of \(P_n\) sitting above \(C_r\). Therefore the results of this
section apply verbatim to the algebras \(I(P_n^{(k)})\). In
particular, for every nonempty subset \(X\subseteq J=\{k+1,k+2,\dots\}\),
the varieties
\[
 \V_X^{(k)}=\mathsf V\bigl(I(P_n^{(k)})\mid n\in X\bigr)
\]
are pairwise distinct and all lie in \([\V_k,\B_k]\). Hence
\([\V_k,\B_k]\) contains continuum many subvarieties.
\end{proof}

For each \(k\geq 1\), it is easy to see that
\[
 \V_k\subseteq\B_k\subseteq\V_{k+1}\subseteq\B_{k+1}\subseteq\V_{\infty}.
\]
Combining this with Proposition~\ref{prop:intervalk}, which shows that the
interval \([\V_k,\B_k]\) contains \(2^{\aleph_0}\) many subvarieties for every $k\geq 4$,
we immediately obtain the following consequence.

\begin{theorem}\label{cor:intervals}
For every \(k\geq 4\), each of the following intervals
\[
 [\V_k,\B_k],\qquad [\V_k,\V_{k+1}],\qquad [\B_{k-1},\B_{k}]
\]
contains \(2^{\aleph_0}\) many subvarieties. For \(k\leq 3\), the
above intervals are all finite.
\end{theorem}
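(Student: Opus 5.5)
For \(k\geq 4\) the plan is to transfer the continuum from Proposition~\ref{prop:intervalk} into the other two intervals using the chain of inclusions
\[
 \V_k\subseteq\B_k\subseteq\V_{k+1}\subseteq\B_{k+1}\subseteq\V_{\infty}.
\]
Since \([\V_k,\B_k]\subseteq[\V_k,\V_{k+1}]\) as sets of varieties, the first two intervals follow at once from Proposition~\ref{prop:intervalk}. For \([\B_{k-1},\B_k]\) I need a construction, because \(\B_{k-1}\) lies below \(\V_k\) and the varieties \(\V_X^{(k)}\) need not contain \(\B_{k-1}\).

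For \([\B_{k-1},\B_k]\), I join each \(\V_X^{(k)}\) (with \(\V_X=\V_X^{(4)}\) when \(k=4\)) with \(\B_{k-1}\), setting \(\W_X=\B_{k-1}\vee\V_X^{(k)}\). Each \(\W_X\) lies in \([\B_{k-1},\B_k]\). To show these are pairwise distinct, I check that \(\B_{k-1}\) satisfies every separating identity \(\sigma_n^{(k)}\) (or \(\sigma_n\) when \(k=4\)). Both sides of \(\sigma_n^{(k)}\) are sums of words of length exactly \(r+4=k\), with \(r=k-5\) and \(\bt_{P_n}\) homogeneous of degree four, so they are \(\approx 0\) under \(N_{k-1}\). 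Hence \(\B_{k-1}\models\sigma_n^{(k)}\). For \(k=4\) the terms have length \(4\), which is killed by \(N_3\). Consequently \(\W_{X_2}\models\sigma_n^{(k)}\) whenever \(n\notin X_2\), while \(\W_{X_1}\not\models\sigma_n^{(k)}\) for \(n\in X_1\). This gives \(2^{\aleph_0}\) distinct members; the upper bound \(2^{\aleph_0}\) holds because there are only countably many identities. The main point to verify carefully is the length count, including the degenerate case \(r=0\) where \(Q_r\) is empty.

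For \(k\leq3\), I use the earlier results. \(\B_3\) has \(11\) subvarieties and \(\V_4\) has \(12\) by Corollary~\ref{coro:crossv4}, so for \(k=3\) the intervals \([\V_3,\B_3]\), \([\V_3,\V_4]\) and \([\B_2,\B_3]\) are finite. For \(k\leq2\), all the varieties involved lie in \(\B_3\), so the intervals are again finite.
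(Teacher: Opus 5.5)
Your treatment of $[\V_k,\B_k]$, $[\V_k,\V_{k+1}]$ and of the case $k\leq 3$ matches the paper. The third interval is where the proposal goes wrong. Your premise that the $\V_X^{(k)}$ ``need not contain $\B_{k-1}$'' contradicts the chain you wrote yourself. Shifting the index gives $\B_{k-1}\subseteq\V_k$ (Proposition~\ref{prop:sandwich}), and every member of $[\V_k,\B_k]$ contains $\V_k$. Hence $[\V_k,\B_k]\subseteq[\B_{k-1},\B_k]$, and the third interval follows from Proposition~\ref{prop:intervalk} exactly as the second one does. That inclusion is all the paper's one-line proof uses, and your $\W_X=\B_{k-1}\vee\V_X^{(k)}$ is simply $\V_X^{(k)}$.

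The detour is also incorrect as written. With $r=k-5$, the words in $\sigma_n^{(k)}$ have length $r+4=k-1$, not $k$. So $N_{k-1}$, which only kills products of $k$ factors, does not annihilate them. In the degenerate case $k=5$ that you flagged, $Q_0$ is empty and $\sigma_n^{(5)}$ is just $\sigma_n$. But $\B_4$ contains $I(P_n)$, which fails $\sigma_n$ by Lemma~\ref{lem:independence}, so your claim $\B_{k-1}\models\sigma_n^{(k)}$ is false there. Since $\B_{k-1}\subseteq\V_X^{(k)}$ for every $X$, this also exposes an off-by-one in the identity of Proposition~\ref{prop:intervalk} itself: $C_r\oplus P_n$ adds $r+1=k-4$ steps below $P_n$, but $Q_r$ has only $r$ letters. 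With a prefix of $k-4$ letters the words have length $k$, and your count would go through. That, however, is a repair of Proposition~\ref{prop:intervalk}, not something this theorem needs. Drop the detour and use the inclusion $\B_{k-1}\subseteq\V_k$.
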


In particular, we obtain infinitely many finitely based finite semirings whose generated varieties have continuum many subvarieties.
See the following proposition.
\begin{proposition}
For each \(k\geq 4\), the interval semirings \(I(C_{k+1})\) and
\(I(Q_k)\) are finitely based finite semirings whose varieties contain
continuum many subvarieties.
\end{proposition}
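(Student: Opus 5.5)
The plan is to combine three ingredients established earlier: the finite bases of Theorems~\ref{thm:vinftyfb} and~\ref{thm:bases}, the generation result Proposition~\ref{prop:finite}, and the continuum of intervals from Theorem~\ref{cor:intervals}.

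First, finiteness. \(I(C_{k+1})\) has \(\binom{k+2}{2}+1\) elements, and Proposition~\ref{prop:finite} gives \(|I(Q_k)|=2k^2-2k+2\).

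Next, the finite bases. By Lemma~\ref{lem:icksa1ak}(ii), \(I(C_{k+1})\cong S(a_1\cdots a_{k+1})\), so \(\mathsf V(I(C_{k+1}))=\V_{k+1}\). Theorem~\ref{thm:bases} then says this variety is finitely based, with basis \(\Delta_{k+1}\). For \(I(Q_k)\), Proposition~\ref{prop:finite} gives \(\mathsf V(I(Q_k))=\B_k\). By definition, \(\B_k=\V_\infty\cap\Mod(N_k)\). Theorem~\ref{thm:vinftyfb} says \(\V_\infty=\Mod(\Delta_\infty)\), so \(\B_k=\Mod(\Delta_\infty\cup\{N_k\})\). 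This is a finite basis, since \(\Delta_\infty\) consists of finitely many identities (\eqref{eq:flat} with its finitely many empty/nonempty variants, together with \eqref{eqx2free}--\eqref{eqc4}).

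Finally, the subvariety count, for \(k\ge 4\). Theorem~\ref{cor:intervals} says \([\V_k,\B_k]\) contains \(2^{\aleph_0}\) subvarieties, and all of these lie in \(\Lat(\B_k)=\Lat(\mathsf V(I(Q_k)))\). Likewise \([\V_k,\V_{k+1}]\subseteq\Lat(\V_{k+1})=\Lat(\mathsf V(I(C_{k+1})))\) has \(2^{\aleph_0}\) members. For the upper bound, each variety is determined by its equational theory, which is a set of identities over a countable alphabet. So there are at most \(2^{\aleph_0}\) varieties, and the count is exactly continuum.

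There is no real obstacle here. The only point needing care is that \(\Delta_\infty\) is genuinely finite, and that \(\B_k\) is defined relative to \(\V_\infty\), which is what lets its basis be obtained by adjoining the single identity \(N_k\).
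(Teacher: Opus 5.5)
Your proposal is correct and is essentially the argument the paper intends. The paper states the proposition without a separate proof, as an immediate consequence of Theorem~\ref{thm:bases} for \(\V_{k+1}=\mathsf V(I(C_{k+1}))\), of \(\B_k=\Mod(\Delta_\infty\cup\{N_k\})=\mathsf V(I(Q_k))\) via Theorem~\ref{thm:vinftyfb} and Proposition~\ref{prop:finite}, and of the continuum-sized intervals in Theorem~\ref{cor:intervals}; your element counts and the \(2^{\aleph_0}\) upper bound are correct details that the paper leaves implicit.
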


\begin{proposition}
The flat semiring \(B_0\) is a finitely based finite semiring whose
variety contains continuum many subvarieties.
\end{proposition}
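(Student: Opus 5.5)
We must show that \(B_0\) is finite, finitely based, and that \(\mathsf V(B_0)\) has continuum many subvarieties. Finiteness is immediate, since \(B_0\) is the \(4\)-element ai-semiring recalled from \cite[Proposition 5.3]{g} in the proof that \(\V_\infty\) is not finitely generated.

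For the continuum claim we use the inclusion \(\V_\infty\subseteq\mathsf V(B_0)\) established there, from \cite[Proposition 5.3]{g}. By Theorem~\ref{cor:intervals}, \(\V_\infty\) already contains \(2^{\aleph_0}\) subvarieties; for instance, the interval \([\V_4,\B_4]\) does. Every subvariety of \(\V_\infty\) is a subvariety of \(\mathsf V(B_0)\). Hence \(\Lat(\mathsf V(B_0))\) has cardinality at least \(2^{\aleph_0}\). A finitely generated variety of finite type has only countably many identities, so it has at most \(2^{\aleph_0}\) subvarieties, and the cardinality is exactly the continuum. This part is purely formal once the cited inclusion is accepted.

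The finite basis property is the real content, and the plan is to import it rather than reprove it. In \cite{g}, \(B_0\) is exhibited as a finitely based finite semiring. The introduction recalls that \cite{g} obtained ``the first example of a finitely based finite semiring with continuum many subvarieties,'' and \(B_0\) is the algebra in question. So we would cite the finite basis of \(\mathsf V(B_0)\) given in \cite{g}.

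If a self-contained argument were wanted, the strategy would parallel Theorem~\ref{thm:vinftyfb}:
\begin{enumerate}[(1)]
\item Write down a finite candidate set \(\Sigma\) of identities valid in \(B_0\).
\item Show that \(\Mod(\Sigma)\) is locally finite.
\item Classify the finite subdirectly irreducible members of \(\Mod(\Sigma)\).
\item Place each of them in \(\mathsf V(B_0)\) via an embedding or separation argument analogous to Lemma~\ref{lem:separation-hsp}.
\end{enumerate}
The main obstacle would be step (3). \(B_0\) is not flat and not nil in the way interval semirings are, so Proposition~\ref{prop:representation} does not apply directly. In the proposal this step is delegated to \cite{g}.
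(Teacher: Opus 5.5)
Your proposal follows the paper's implicit argument: the finite basis property of $B_0$ is imported from the literature rather than proved here, and the continuum of subvarieties comes from $\V_\infty\subseteq\mathsf{V}(B_0)$ \cite[Proposition 5.3]{g} combined with Theorem~\ref{cor:intervals}. However, you should not identify $B_0$ with the ``first example'' of \cite{g}, since the remark after the proposition says this result answers \cite[Remark 5.5]{g}, so the continuum claim for $B_0$ is new here; also, $B_0$ is flat, as the statement itself says, and what puts it outside the scope of Proposition~\ref{prop:representation} in your optional sketch is that it is not nil.
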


\begin{remark}
This solves the problem raised in \cite[Remark 5.5]{g}.
\end{remark}

\section{The limit subvariety of $\V_{\infty}$}\label{sec:limit}

For each \(k\geq 2\), let \(E_k\) be the bounded poset formed by two \(k\)-step chains sharing
only their least and greatest elements. Its only strict order relations are
\[
\bot<a_1<a_2<\cdots<a_{k-1}<\top,
\]
\[
\bot<b_1<b_2<\cdots<b_{k-1}<\top,
\]
with no comparisons between the two chains. See
Figure~\ref{fig:seven-posets} for the cases \(E_2\) and \(E_3\).

As can be seen from
Figure~\ref{fig:lat-b3}, \(\V_k\) is a maximal subvariety of
\(\Var(E_k)\) for \(k=2,3\). In the following, we shall prove that
\(\V_k\) is a maximal subvariety of \(\Var(I(E_k))\), and that
$I(E_k)$ is nonfinitely based for each $k\geq 4$.

We first treat the case \(k=4\) by a semantic method. The strategy is as
follows. For each positive integer \(n\), we construct an algebra
\(A_n\) such that every \(n\)-generated subalgebra of \(A_n\) lies in
\(\Var(I(E_4))\), and hence \(A_n\) satisfies every \(n\)-variable
identity of \(\Var(I(E_4))\), while \(A_n\) itself does not lie in
\(\Var(I(E_4))\). Thus \(A_n\) fails some identity of
\(\Var(I(E_4))\), which shows that the set of all \(n\)-variable
identities of \(\Var(I(E_4))\) does not form a basis of
\(\Var(I(E_4))\). In particular, \(\Var(I(E_4))\) is nonfinitely based.

Concretely, for each \(n\geq 2\), construct \(F_n\) by adjoining to the
two anchor chains of \(E_4\) upper fence interior points
\(u_1,\dots,u_{n-1}\) and lower fence interior points
\(v_1,\dots,v_n\), subject to the strict order relations
\[
a_3>v_1<u_1>v_2<u_2>\cdots>v_n<b_3.
\]
See Figure~\ref{fig:F3} for the Hasse diagram of \(F_3\);
for the sake of clarity, the order relations
\(\bot<u_i,v_j<\top\) are indicated by gray dashed lines.

\begin{figure}[htbp]
\centering
\begin{tikzpicture}[
  scale=0.6,
  every node/.style={font=\small},
  elem/.style={circle, fill=black, inner sep=1.6pt},
  lab/.style={font=\small},
  edge/.style={thick}
]

\node[elem] (B) at (0,-0.8) {};
\node[lab] at (0,-1.1) {\(\bot\)};

\node[elem] (T) at (0,5.2) {};
\node[lab] at (0,5.5) {\(\top\)};

\node[elem] (a1) at (-2.8,0.8) {};
\node[elem] (a2) at (-2.8,2.2) {};
\node[elem] (a3) at (-2.8,3.6) {};

\node[lab] at (-3.4,0.8) {\(a_1\)};
\node[lab] at (-3.4,2.2) {\(a_2\)};
\node[lab] at (-3.4,3.6) {\(a_3\)};

\node[elem] (b1) at (2.8,0.8) {};
\node[elem] (b2) at (2.8,2.2) {};
\node[elem] (b3) at (2.8,3.6) {};

\node[lab] at (3.4,0.8) {\(b_1\)};
\node[lab] at (3.4,2.2) {\(b_2\)};
\node[lab] at (3.4,3.6) {\(b_3\)};

\node[elem] (v1) at (-1.4,1.8) {};
\node[elem] (v2) at (0.0,1.8) {};
\node[elem] (v3) at (1.4,1.8) {};

\node[lab] at (-1.7,1.6) {\(v_1\)};
\node[lab] at (0.0,1.5) {\(v_2\)};
\node[lab] at (1.7,1.6) {\(v_3\)};

\node[elem] (u1) at (-0.7,3.2) {};
\node[elem] (u2) at (0.7,3.2) {};

\node[lab] at (-0.7,3.5) {\(u_1\)};
\node[lab] at (0.7,3.5) {\(u_2\)};

\draw[edge] (B) -- (a1);
\draw[edge] (B) -- (b1);

\draw[edge] (a1) -- (a2);
\draw[edge] (a2) -- (a3);

\draw[edge] (b1) -- (b2);
\draw[edge] (b2) -- (b3);

\draw[edge] (a3) -- (T);
\draw[edge] (b3) -- (T);

\draw[edge] (a3) -- (v1);
\draw[edge] (v1) -- (u1);
\draw[edge] (u1) -- (v2);
\draw[edge] (v2) -- (u2);
\draw[edge] (u2) -- (v3);
\draw[edge] (v3) -- (b3);

\foreach \x in {v1,v2,v3,u1,u2} {
  \draw[edge, gray!40, dashed] (B) -- (\x);
  \draw[edge, gray!40, dashed] (\x) -- (T);
}

\end{tikzpicture}
\caption{The Hasse diagram of \(F_3\).}
\label{fig:F3}
\end{figure}

For each poset \(P_n\), the corresponding ai-semiring term
\(\bt_{P_n}\) was introduced in \eqref{eq:tn}. Every path from
\(\bot\) to \(\top\) in \(P_n\) consists of exactly four steps, and
this yields the \(4\)-uniform ai-semiring term \(\bt_{P_n}\). Using this
idea, we introduce a general ai-semiring term \(\bt_P\) for an
arbitrary bounded poset \(P\) as follows.

For a bounded poset \(P\), give every cover \(a\prec b\) its own
variable \(x_{ab}\). Let \(\bt_P\) be the sum of the cover-variable
words of all saturated bottom-to-top paths; that is,
\[
\bt_P =
\sum_{\bot\prec a_1\prec a_2\prec \cdots \prec a_n\prec \top}
x_{\bot a_1}x_{a_1a_2}\cdots x_{a_n\top}.
\]

For example,
$x_{\bot \ell_{12}}x_{\ell_{12}m_1}x_{m_1u_{13}}x_{u_{13}\top}$
is a summand of \(\bt_{P_4}\). Also, for each \(k\geq 2\),
\[
\bt_{E_k} =
x_{\bot a_1}x_{a_1a_2}\cdots x_{a_{k-1}\top}
+
x_{\bot b_1}x_{b_1b_2}\cdots x_{b_{k-1}\top},
\]
and
\[
\bt_{Q_k} =
\sum_{c_i\in \{a_i,b_i\}}
x_{\bot c_1} x_{c_1 c_2} \cdots x_{c_{k-1}\top}.
\]

Similarly to the proof of Lemma~\ref{lem:recovery}, we obtain the
following analogous conclusion.

\begin{lemma}\label{lem:strict}
Let \(P\) and \(Q\) be bounded posets. Then a valuation of \(\bt_P\) in \(I(Q)\)
is nonzero if and only if it is induced by a strictly order preserving map \(P\to Q\).
\end{lemma}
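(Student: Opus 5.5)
We follow the proof of Lemma~\ref{lem:recovery}, with the saturated paths of \(P\) in place of the four-step paths of \(P_n\). The plan is to show two things. First, every strictly order preserving map \(F\colon P\to Q\) yields a nonzero valuation, via \(x_{ab}\mapsto (F(a),F(b))\). Second, every nonzero valuation \(\varphi\) of \(\bt_P\) is of this form for some such \(F\).

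\emph{The converse direction.} Let \(F\) be strictly order preserving and set \(\varphi(x_{ab})=(F(a),F(b))\) for each cover \(a\prec b\). This is a genuine element of \(I(Q)\) because \(F(a)<F(b)\). Along any saturated path \(\bot\prec c_1\prec\cdots\prec c_n\prec\top\) the intervals compose, so every summand evaluates to \((F(\bot),F(\top))\). By the flat addition law, \(\varphi(\bt_P)=(F(\bot),F(\top))\neq0\).

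\emph{The forward direction.} Suppose \(\varphi(\bt_P)=(a,b)\neq0\). By flat addition, every summand evaluates to \((a,b)\). Every cover \(c\prec d\) of the finite bounded poset \(P\) lies on some saturated bottom-to-top path, so every variable occurs in a summand and therefore has a nonzero value, say \(\varphi(x_{cd})=(\alpha_{cd},\beta_{cd})\). In a nonzero product of intervals, consecutive endpoints must match. Hence along each saturated path the first left endpoint is \(a\), the last right endpoint is \(b\), and \(\beta_{c_ic_{i+1}}=\alpha_{c_{i+1}c_{i+2}}\).

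\emph{Main step: a well-defined \(F\).} We set \(F(\bot)=a\), \(F(\top)=b\), and for an interior point \(p\) we want \(F(p)\) to be the common value of \(\beta_{cp}\) over lower covers \(c\prec p\) and of \(\alpha_{pd}\) over upper covers \(p\prec d\). To see these all agree, take any lower cover \(c\prec p\) and any upper cover \(p\prec d\). Extend \(c\prec p\prec d\) to a saturated path by choosing a saturated chain from \(\bot\) to \(c\) and one from \(d\) to \(\top\); such chains exist by finiteness. The path gives \(\beta_{cp}=\alpha_{pd}\). Since \(c\) and \(d\) were arbitrary, every \(\beta_{cp}\) equals every \(\alpha_{pd}\), so \(F(p)\) is well defined. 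Every interior \(p\) has both a lower and an upper cover, so the definition applies to all of \(P\). The same argument shows \(\alpha_{\bot d}=a\) for the covers of \(\bot\), and \(\beta_{c\top}=b\) for the covers of \(\top\).

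\emph{Conclusion.} By construction \(\varphi(x_{cd})=(F(c),F(d))\), which is a strict interval, so \(F(c)<F(d)\) for every cover \(c\prec d\). Strict order preservation then follows by transitivity along saturated chains. This shows \(\varphi\) is induced by \(F\).

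The only real obstacle is the well-definedness of \(F(p)\) across different lower and upper covers. It is resolved by the observation that any lower cover and any upper cover of \(p\) lie together on one saturated path, which is where finiteness of \(P\) is used.
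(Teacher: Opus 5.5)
Your proof is correct and is essentially the paper's argument. The paper proves this lemma only by appeal to the proof of Lemma~\ref{lem:recovery}, and your key step (any lower cover $c\prec p$ and any upper cover $p\prec d$ of an interior point lie on a common saturated path, which forces $\beta_{cp}=\alpha_{pd}$) is exactly the general form of the argument there showing that $M_{v,e,d}$ is independent of $e$ and $d$.
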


For incomparable interior points \(p,q\) in a bounded poset $P$, choose a saturated path from
\(\bot\) to \(p\) and a saturated path from \(q\) to \(\top\), and
concatenate their words to obtain \(\bw_{pq}\). Then we have the following result.

\begin{lemma}\label{lem:characteristic}
For any poset \(Q\), the identity \(\bt_P\approx \bt_P+\bw_{pq}\) holds
in \(I(Q)\) if and only if $f(p) = f(q)$ for every strictly order preserving map \(f\colon P\to Q\).
\end{lemma}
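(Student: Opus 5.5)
The plan is to mimic the proof of Lemma~\ref{lem:criterion}, with Lemma~\ref{lem:strict} in place of Lemma~\ref{lem:recovery}. First, the case of a zero valuation is harmless. If an assignment \(\varphi\colon c(\bt_P)\to I(Q)\) has \(\varphi(\bt_P)=0\), then \(\varphi(\bt_P+\bw_{pq})=0\) because \(0\) absorbs addition. Note that \(c(\bw_{pq})\subseteq c(\bt_P)\), since \(\bw_{pq}\) is built from cover variables of \(P\). By Lemma~\ref{lem:flatuuq}, the identity therefore holds in \(I(Q)\) if and only if \(\varphi(\bw_{pq})=\varphi(\bt_P)\) for every assignment with \(\varphi(\bt_P)\neq0\).

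Next, I would make Lemma~\ref{lem:strict} explicit, as in the proof of Lemma~\ref{lem:recovery}. Suppose \(\varphi(\bt_P)=(a,b)\ne0\). Every summand then evaluates to \((a,b)\), so every cover variable \(x_{cd}\) (each lies on some saturated path, as \(P\) is finite and bounded) receives a nonzero interval. The endpoint-matching argument then gives a well-defined strictly order preserving map \(f\colon P\to Q\) with \(\varphi(x_{cd})=(f(c),f(d))\) for every cover \(c\prec d\), and with \(f(\bot)=a\), \(f(\top)=b\).

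For well-definedness at an interior point \(c\), take two covers \(c'\prec c\) and \(c''\prec c\) and one cover \(c\prec d\). Both products \(\varphi(x_{c'c})\varphi(x_{cd})\) and \(\varphi(x_{c''c})\varphi(x_{cd})\) occur inside nonzero summands, so both right endpoints equal the left endpoint of \(\varphi(x_{cd})\). The dual argument handles outgoing covers. Strictness on covers gives strictness on all of \(P\) by transitivity. Conversely, any strictly order preserving \(f\) defines such a \(\varphi\), and then \(\varphi(\bt_P)=(f(\bot),f(\top))\ne0\).

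Finally, I would compute \(\varphi(\bw_{pq})\) under this correspondence. The chosen saturated path \(\bot\prec\cdots\prec p\) telescopes to \((f(\bot),f(p))\), and the path \(q\prec\cdots\prec\top\) telescopes to \((f(q),f(\top))\). Hence
\[
\varphi(\bw_{pq})=(f(\bot),f(p))\cdot(f(q),f(\top)),
\]
which equals \(\varphi(\bt_P)\) when \(f(p)=f(q)\) and equals \(0\ne\varphi(\bt_P)\) otherwise. Combining this with the bijection between nonzero valuations and strictly order preserving maps gives the stated equivalence.

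The only point needing care is the well-definedness of \(f\) at interior points, that is, the endpoint-matching argument behind Lemma~\ref{lem:strict}. It goes through exactly as in Lemma~\ref{lem:recovery}, and no further obstacle is expected.
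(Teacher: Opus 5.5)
Your proof is correct and follows the paper's own argument. Zero valuations are harmless; nonzero valuations of \(\bt_P\) are exactly those induced by strictly order preserving maps \(f\colon P\to Q\) (Lemma~\ref{lem:strict}); and \(\varphi(\bw_{pq})=(f(\bot),f(p))\cdot(f(q),f(\top))\), which is nonzero exactly when \(f(p)=f(q)\). Your endpoint-matching sketch of Lemma~\ref{lem:strict} just spells out what the paper leaves to the analogy with Lemma~\ref{lem:recovery}.
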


\begin{proof}
Consider the word \(\bw_{pq}\). It is obtained by choosing a
saturated path from \(\bot\) to \(p\), a saturated path from \(q\) to
\(\top\), and concatenating their words.
Let $\varphi$ be a valuation induced by
a strictly order preserving map \(f\), the first path evaluates to
\((f(\bot),f(p))\), and the second to \((f(q),f(\top))\). Therefore
\[
\varphi(\bw_{pq}) =
(f(\bot),f(p))\cdot(f(q),f(\top)).
\]
This product is equal to \((f(\bot),f(\top))\) exactly when
\(f(p)=f(q)\), and is \(0\) otherwise.
If \(\varphi(\bt_P) = 0\), then the identity
\(\bt_P\approx\bt_P+\bw_{pq}\) holds automatically, since \(0\) absorbs
addition. Hence
\[
I(Q)\vDash \bt_P\approx\bt_P+\bw_{pq}
\]
if and only if every strictly order preserving map \(f\colon P\to Q\) satisfies
\(f(p)=f(q)\). This proves the identity criterion.
\end{proof}

\begin{proposition}\label{prop:joint-separation}
Let \(P\) be a bounded poset and let \(Q\) be a poset. Then
\[
I(P)\in\Var(I(Q))
\]
if and only if there exist strictly order preserving maps \(P\to Q\) that jointly separate
the points of \(P\).
\end{proposition}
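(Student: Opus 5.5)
The proof has two directions. The easy one is already done: if there is a family of strictly order preserving maps \(P\to Q\) that jointly separates \(P\), then Lemma~\ref{lem:separation-hsp}, applied with every \(Q_j=Q\), gives \(I(P)\in\Var(I(Q))\). So the plan concentrates on the converse, and I will argue it through the characteristic identities of Lemma~\ref{lem:characteristic}.

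Assume \(I(P)\in\Var(I(Q))\), and fix two distinct points \(p\neq q\) of \(P\). We must find one strictly order preserving map \(f\colon P\to Q\) with \(f(p)\ne f(q)\). I split into two cases.

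\textbf{Case 1: \(p\) and \(q\) are comparable.} Say \(p<q\). Then any strictly order preserving map \(f\) satisfies \(f(p)<f(q)\), so it suffices that at least one such map exists. To get one, use the identity map of \(P\). It is strictly order preserving, so by Lemma~\ref{lem:strict} it induces a valuation of \(\bt_P\) in \(I(P)\) with \(\varphi(\bt_P)=(\bot,\top)\ne0\). Hence \(I(P)\) fails the identity \(\bt_P\approx x^2\), which in this paper's convention means \(\bt_P\approx0\). Because \(I(P)\in\Var(I(Q))\), the algebra \(I(Q)\) must fail it too. So some valuation of \(\bt_P\) in \(I(Q)\) is nonzero, and by Lemma~\ref{lem:strict} it is induced by a strictly order preserving map \(P\to Q\).

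\textbf{Case 2: \(p\) and \(q\) are incomparable.} If one of them is \(\bot\) or \(\top\), it is comparable to everything, so this case reduces to Case 1. Otherwise both are interior, and the word \(\bw_{pq}\) is defined. Apply Lemma~\ref{lem:characteristic} with \(Q:=P\), again using the identity map: since \(\mathrm{id}(p)\ne\mathrm{id}(q)\), \(I(P)\) fails \(\bt_P\approx\bt_P+\bw_{pq}\). Membership in the variety then forces \(I(Q)\) to fail the same identity. Applying Lemma~\ref{lem:characteristic} once more, now to \(Q\), yields a strictly order preserving \(f\colon P\to Q\) with \(f(p)\ne f(q)\). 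Collecting one such map for each pair gives a jointly separating family.

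The main obstacle is Lemma~\ref{lem:strict}, which the paper only asserts by analogy with Lemma~\ref{lem:recovery}. Its proof must reconstruct \(f\) from a nonzero valuation. Write \(\varphi(\bt_P)=(\alpha,\beta)\). Because addition is flat, every saturated-path word evaluates to \((\alpha,\beta)\). Each cover variable \(x_{ab}\) then has some value \((s,t)\), and I would set \(f(a)=s\) and \(f(b)=t\). This is well defined because in a finite bounded poset every cover lies on some saturated path. The delicate point is consistency: two covers meeting at a point \(a\) must assign the same value to \(f(a)\). For an incoming cover \(c\prec a\) and an outgoing cover \(a\prec d\), choose a saturated path through \(c\prec a\prec d\); nonzero interval multiplication forces the endpoints to match. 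Comparing any two covers at \(a\) through such a pair then gives consistency, with the degenerate cases at \(\bot\) and \(\top\) handled separately. Strictness of \(f\) on covers comes from the fact that every value is a strict interval, and transitivity extends it to all of \(P\). The converse part of the lemma is a direct computation.
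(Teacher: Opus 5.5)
Your proof is correct and follows essentially the same route as the paper. Sufficiency comes from Lemma~\ref{lem:separation-hsp}; for necessity on incomparable pairs, you transfer the failure of $\bt_P\approx\bt_P+\bw_{pq}$ from $I(P)$ to $I(Q)$ using Lemma~\ref{lem:characteristic}. The one difference concerns comparable pairs, which only need some strictly order preserving map $P\to Q$ to exist. The paper gets that map from $\htp(P)\le\htp(Q)$ (Lemma~\ref{lem:height-u}) and a rank map onto a longest chain of $Q$. You get it from the failure of $\bt_P\approx 0$ together with Lemma~\ref{lem:strict}, whose proof you sketch soundly.
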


\begin{proof}
By Lemma~\ref{lem:separation-hsp}, sufficiency is immediate.
It remains to prove necessity.

Suppose that \(I(P)\in\Var(I(Q))\). By Lemma~\ref{lem:height-u}, it
follows that
\[
\htp(P)\leq \htp(Q).
\]
Therefore a strictly order preserving map \(P\to Q\) certainly exists. For example, let
\(\htp(P)=n\), and choose a chain
\[
a_0<a_1<\cdots<a_n
\]
in \(Q\) of length \(n\). The rank function then induces a strictly order preserving map \(f\colon P\to Q\) by
\[
f(v)=a_{\ell(\bot,v)}.
\]

It remains only to prove that the strictly order preserving maps \(P\to Q\) jointly
separate \(P\). Since every comparable pair is separated by every strictly order preserving map, only incomparable pairs matter.

Suppose, on the contrary, that some incomparable pair \(p,q\) is
identified by every strictly order preserving map \(P\to Q\). Then, by the identity
criterion proved above, the identity
\[
\bt_P\approx\bt_P+\bw_{pq}
\]
holds in \(I(Q)\), and hence also in \(\Var(I(Q))\). But this identity
fails in \(I(P)\): under the natural cover-interval valuation
\[
x_{ab}\mapsto (a,b),
\]
every summand of \(\bt_P\) evaluates to \((\bot,\top)\), while
\(\bw_{pq}\) evaluates to
\[
(\bot,p)\cdot(q,\top),
\]
which is \(0\) because \(p\neq q\). This contradicts
\(I(P)\in\Var(I(Q))\). Therefore every pair of distinct points of \(P\)
is separated by some strictly order preserving map \(P\to Q\).
\end{proof}

\begin{lemma}\label{lem:fence}
For every \(n\geq 2\), \(I(F_n)\notin\Var(I(E_4))\), whereas
\[
I(F_n\setminus\{u_s\})\in\Var(I(E_4))
\]
for each \(1\leq s\leq n-1\).
\end{lemma}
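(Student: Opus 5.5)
The plan is to reduce both claims to the joint-separation criterion of Proposition~\ref{prop:joint-separation}: \(I(P)\in\Var(I(E_4))\) exactly when the strictly order preserving maps \(P\to E_4\) jointly separate \(P\). So I will first describe all strictly order preserving maps \(F_n\to E_4\) rigidly enough to see that some pair is never separated. Then, for \(F_n\setminus\{u_s\}\), I will exhibit explicit maps separating every pair.

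\emph{Rigidity.} Let \(f\colon F_n\to E_4\) be strictly order preserving. The chain \(\bot<a_1<a_2<a_3<\top\) has four steps and \(E_4\) has height \(4\), so \(f(\bot)=\bot\), \(f(\top)=\top\), and \(f(a_1),f(a_2),f(a_3)\) must be the three interior points of one of the two maximal chains of \(E_4\), in order. The same holds for the \(b\)-chain. Every fence point lies strictly between \(\bot\) and \(\top\), so it is sent to an interior point of \(E_4\). Two interior points of \(E_4\) are comparable only if they lie on the same side (the \(a\)-side or the \(b\)-side). Now follow the comparabilities
\[
a_3>v_1<u_1>v_2<\cdots>v_n<b_3 .
\]
Each one is preserved as a strict comparability by \(f\), so consecutive images lie on the same side. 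Hence \(f(a_3)\) and \(f(b_3)\) lie on the same side of \(E_4\). Since both are the rank-\(3\) point of their chain, \(f(a_3)=f(b_3)\), and therefore \(f(a_1)=f(b_1)\). Thus no strictly order preserving map separates \(a_1\) from \(b_1\), and Proposition~\ref{prop:joint-separation} gives \(I(F_n)\notin\Var(I(E_4))\). This connectivity argument is the heart of the lemma; I expect it to be the main point to write carefully, in particular the forcing of \(\bot,\top\) and of the chain points onto ranks \(1,2,3\) of a single side.

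\emph{Separation after deleting \(u_s\).} In \(P=F_n\setminus\{u_s\}\) the fence splits into a left component \(\{v_1,u_1,\dots,u_{s-1},v_s\}\), attached only to \(a_3\), and a right component \(\{v_{s+1},u_{s+1},\dots,v_n\}\), attached only to \(b_3\). Every map below fixes \(\bot,\top\), the \(a\)-chain and the \(b\)-chain, and sends the left component to the \(a\)-side and the right component to the \(b\)-side. On the left side, any assignment with
\[
v_i\mapsto a_1 \text{ or } a_2,\qquad u_i\mapsto a_3
\]
is strictly order preserving, because \(v_1<a_3\) and \(v_i,v_{i+1}<u_i\). Assigning all \(v_i\mapsto a_1\) and each \(u_i\mapsto a_2\) or \(a_3\) is also strictly order preserving. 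The right side is handled symmetrically with the \(b\)'s.

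\emph{The separating family.} Using these assignments I take the following maps.
\begin{enumerate}[\((1)\)]
\item Two baseline maps, (A) with \(v\mapsto a_1\), \(u\mapsto a_2\) and (B) with \(v\mapsto a_2\), \(u\mapsto a_3\), each with its symmetric counterpart on the \(b\)-side. Together they separate every fence point from the chain point it could collide with, and they separate fence points from each other across types.
\item For each \(v_i\), a map sending \(v_i\) to \(a_1\) (resp.\ \(b_1\)) and all other \(v\)'s to \(a_2\) (resp.\ \(b_2\)). These separate distinct \(v\)'s.
\item For each \(u_i\), a map sending all \(v\)'s to the bottom level, \(u_i\) to the middle level and the other \(u\)'s to the top level. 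These separate distinct \(u\)'s.
\end{enumerate}
Points of different sides, the bounds, and comparable pairs are separated by every map. Lemma~\ref{lem:separation-hsp} then gives \(I(F_n\setminus\{u_s\})\in\Var(I(E_4))\). The remaining check that each listed map respects all cover relations is routine.
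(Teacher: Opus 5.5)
Your proof is correct and follows the paper's route. The first claim uses the same connectivity argument along the fence, forcing $f(a_3)=f(b_3)$. The second uses the joint-separation criterion with folding maps, and your baseline maps (A) and (B) are exactly the paper's $g$ and $f$.

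Your extra maps (2) and (3) are genuinely needed, not redundant. The paper asserts that $f$ and $g$ alone separate all points of $F_n\setminus\{u_s\}$. But both maps send any two $v$'s of the same fence component (e.g.\ $v_1,v_2$ when $s\geq 2$) to a common point, and likewise any two $u$'s of the same component. Your family supplies exactly the separations missing from the paper's argument.
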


\begin{proof}
Let \(f\colon F_n\to E_4\) be an arbitrary strictly order preserving map. Then \(f\) sends
the bounds to the bounds, because both \(F_n\) and \(E_4\) have height
\(4\). Its interior points map to interior points. Comparable interior
points must use the same chain of \(E_4\), since the latter consists of
two chains whose interior points are incomparable. On the other hand,
since \(f\) is strict,
\[
f(a_3)>f(v_1)<f(u_1)>f(v_2)<f(u_2)>\cdots>f(v_n)<f(b_3),
\]
it follows that \(f(a_3)\) and \(f(b_3)\) lie in the same chain of
\(E_4\), and both have rank \(3\). Hence \(f(a_3)=f(b_3)\). This shows
that \(a_3\) and \(b_3\) cannot be separated by any strictly order preserving map \(F_n\to E_4\). By Proposition~\ref{prop:joint-separation}, it follows
that \(I(F_n)\notin\Var(I(E_4))\).

Now let \(1\leq s\leq n-1\), and put
\[
Q_{n,s}=F_n\setminus\{u_s\},
\]
with the induced order. Deleting \(u_s\) leaves two interior components,
each containing exactly one anchor chain. Define a map
\(f\colon Q_{n,s}\to E_4\) by fixing the bounds and the two anchor
chains of length \(4\), that is,
\[
f(a_i)=a_i,\qquad f(b_i)=b_i\qquad (i=1,2,3),
\]
and folding the remaining interior points \(v_i\) and \(u_j\) into the
two chains as follows:
\[
f(v_i)=
\begin{cases}
a_2, & 1\leq i\leq s,\\
b_2, & s+1\leq i\leq n,
\end{cases}
\qquad
f(u_j)=
\begin{cases}
a_3, & 1\leq j\leq s-1,\\
b_3, & s\leq j\leq n-1.
\end{cases}
\]
The map \(f\) is strict and separates all distinct pairs of points in $Q_{n,s}$ except
those of the form \(v_i\) versus \(a_2,b_2\), and \(u_j\) versus
\(a_3,b_3\).

Now define a map \(g\colon Q_{n,s}\to E_4\) which again fixes the points on
the two chains of length \(4\), and sends the remaining interior points
as follows:
\[
g(v_i)=
\begin{cases}
a_1, & 1\leq i\leq s,\\
b_1, & s+1\leq i\leq n,
\end{cases}
\qquad
g(u_j)=
\begin{cases}
a_2, & 1\leq j\leq s-1,\\
b_2, & s\leq j\leq n-1.
\end{cases}
\]
It is easy to verify that \(g\) is also strict.
Using \(f\) and \(g\), one separates all points of \(Q_{n,s}\).
By Proposition~\ref{prop:joint-separation}, it follows that
\(I(Q_{n,s})\in\Var(I(E_4))\), as required.
\end{proof}

\begin{proposition}\label{prop:nfb}
The interval semiring \(I(E_4)\) is nonfinitely based.
\end{proposition}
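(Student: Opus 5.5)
We follow the semantic strategy announced at the start of this section. For each \(n\geq 2\) we put \(A_n=I(F_n)\). We show that every \(n\)-generated subsemiring of \(A_n\) lies in \(\Var(I(E_4))\), while \(A_n\) itself does not. Then \(A_n\) satisfies all \(n\)-variable identities of \(\Var(I(E_4))\) but is not in \(\Var(I(E_4))\). So for every \(n\) the \(n\)-variable identities of \(\Var(I(E_4))\) fail to form a basis, and no finite basis can exist, since a finite basis uses only boundedly many variables. The nonmembership \(I(F_n)\notin\Var(I(E_4))\) is already the first half of Lemma~\ref{lem:fence}.

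For the subalgebra claim, the generator count of \(F_n\) is what matters. The fence \(a_3>v_1<u_1>\cdots<u_{n-1}>v_n<b_3\) has \(2n+1\) cover relations among elements of rank \(2\) and \(3\), and each such cover \((x,y)\) is a nonfactorizable element of \(I(F_n)\). For instance, \((v_i,u_i)\) with \(v_i<u_i\) a cover can only be produced as a generator, or as a sum equal to itself. In a flat semiring, a sum of distinct elements is \(0\), so every nonzero element of a subsemiring generated by a set \(X\) is a product of elements of \(X\).

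Hence, if \(A\) is generated by \(n\) elements, some cover interval \((v_s,u_s)\) or \((u_s,v_{s+1})\) is not in \(A\). We need to choose the bookkeeping so that a missing cover lets us delete some \(u_s\). The cleanest approach is to count covers incident to the \(u_j\)'s: there are \(2(n-1)\) of them, plus the covers \(\bot<u_j\) and \(u_j<\top\). If the count does not immediately force a deleted \(u_s\), we would instead modify the argument to work with \(F_{N}\) for a larger \(N\) depending on \(n\). Taking \(N\) large, for example \(N=4n\), pigeonhole guarantees some \(u_s\) such that no generator has an endpoint equal to \(u_s\).

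Then every nonzero element of \(A\) is an interval \((p,q)\) with \(p,q\neq u_s\). Indeed, products of generators have endpoints among the generators' endpoints, and sums are \(0\) or repeat an element. Thus \(A\subseteq I(F_N\setminus\{u_s\})\), with the order induced, because any product realized in \(A\) only uses comparabilities between the remaining points. This inclusion is a subsemiring inclusion. The second half of Lemma~\ref{lem:fence} gives \(I(F_N\setminus\{u_s\})\in\Var(I(E_4))\), so \(A\in\Var(I(E_4))\).

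The main obstacle is this last verification. One must check that the subsemiring of \(I(F_N)\) avoiding the endpoint \(u_s\) genuinely sits inside \(I(F_N\setminus\{u_s\})\) as a subalgebra. Products in \(I(F_N)\) of intervals not touching \(u_s\) never pass through \(u_s\), since the middle endpoint is shared and is not \(u_s\). Comparabilities \(p<q\) in \(F_N\) with \(p,q\neq u_s\) remain comparabilities in the induced order. With the generator count adjusted to \(A_n=I(F_N)\), the semantic criterion applies for every \(n\), and the proposition follows.
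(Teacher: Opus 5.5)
Your final argument, with $A_n=I(F_{4n})$, is the paper's proof. The paper uses $I(F_{2n+2})$: its $2n+1$ upper fence points cannot all occur among the at most $2n$ endpoints of $n$ generators, so the generated subsemiring lies in $I(F_{2n+2}\setminus\{u_s\})\in\Var(I(E_4))$ by Lemma~\ref{lem:fence}, and your version differs only in taking $4n-1$ such points. The initial choice $A_n=I(F_n)$ and the detour through nonfactorizable fence covers (there are $2n$ of them, not $2n+1$, and a missing cover does not let you delete a point) add nothing and can be cut in favour of the endpoint pigeonhole.
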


\begin{proof}
For each positive integer \(n\), consider the interval
semiring \(I(F_{2n+2})\). By Lemma~\ref{lem:fence}, $I(F_{2n+2})\notin\Var(I(E_4))$.
On the other hand, let \(S\) be any subsemiring of \(I(F_{2n+2})\)
generated by at most \(n\) elements. The endpoints of the nonzero
generators form a set of at most \(2n\) points. Since there are \(2n+1\)
interior fence uppers, at least one of them, say \(u_s\), is absent from
this set. Hence all generators, together with zero if present, lie in
the subsemiring $I(F_{2n+2}\setminus\{u_s\})$
of \(I(F_{2n+2})\). By Lemma~\ref{lem:fence}, the generated subalgebra
\(S\) belongs to \(\Var(I(E_4))\).

Thus every \(n\)-generated subalgebra of \(I(F_{2n+2})\) lies in the
variety \(\Var(I(E_4))\), whereas \(I(F_{2n+2})\) itself does
not. Hence no finite set of identities axiomatizes the variety  \(\Var(I(E_4))\), so \(I(E_4)\) is nonfinitely based.
\end{proof}

\begin{theorem}
The interval semiring \(I(E_k)\) is nonfinitely based for each
\(k\geq 4\).
\end{theorem}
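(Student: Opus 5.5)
I would generalise the semantic argument of Proposition~\ref{prop:nfb} directly to height \(k\). For each \(n\geq 2\), let \(F_n^{(k)}\) be obtained from \(E_k\) by adjoining interior fence points \(u_1,\dots,u_{n-1}\) and \(v_1,\dots,v_n\), subject to the strict relations
\[
a_{k-1}>v_1<u_1>v_2<u_2>\cdots>v_n<b_{k-1},
\]
together with the relations \(\bot<u_i,v_j<\top\). The height stays \(k\): any chain through the fence uses at most two fence steps, and those steps are attached only at the top anchor level. The point \(v_1<a_{k-1}\) does not create a longer chain, because \(v_1\) lies only above \(\bot\).

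The first step is the analogue of Lemma~\ref{lem:fence}. Let \(f\colon F_n^{(k)}\to E_k\) be strictly order preserving. Since both posets have height \(k\), \(f\) fixes the bounds, and the two \(k\)-step anchor chains must go to \(k\)-step chains of \(E_k\) with ranks preserved. In particular \(f(a_{k-1})\) and \(f(b_{k-1})\) have rank \(k-1\). Comparable interior points must map into the same chain of \(E_k\), so the zigzag forces \(f(a_{k-1})\) and \(f(b_{k-1})\) onto the same chain, and hence \(f(a_{k-1})=f(b_{k-1})\). Proposition~\ref{prop:joint-separation} then gives \(I(F_n^{(k)})\notin\Var(I(E_k))\).

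Next, delete some \(u_s\). The remaining interior splits into two components, each attached to exactly one anchor chain. I would use the two maps \(f,g\) of Lemma~\ref{lem:fence}, with the ranks replaced as follows:
\begin{itemize}
\item \(f\) sends the \(v_i\) to level \(k-2\) and the \(u_j\) to level \(k-1\) of their own component's chain;
\item \(g\) sends the \(v_i\) to level \(k-3\) and the \(u_j\) to level \(k-2\).
\end{itemize}
Both maps fix the anchors and the bounds. Strictness needs checking only on the fence relations, \(\bot\), \(\top\), and \(v_1<a_{k-1}\), \(v_n<b_{k-1}\). Each fence point is sent to two different levels by \(f\) and \(g\), so fence points are separated from the anchors. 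Different components go to different chains, and points within a component are separated by level. Should some same-level fence points in one component coincide, I would add the freedom to vary images, for example sending the \(v_i\) to \(a_1\); for \(k\geq 4\) there is enough room. Proposition~\ref{prop:joint-separation} then gives \(I(F_n^{(k)}\setminus\{u_s\})\in\Var(I(E_k))\).

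Finally, I would repeat the counting argument of Proposition~\ref{prop:nfb} with \(F_{2n+2}^{(k)}\). Any subsemiring generated by \(n\) elements involves at most \(2n\) endpoints, so it misses some \(u_s\) and lies in \(I(F_{2n+2}^{(k)}\setminus\{u_s\})\in\Var(I(E_k))\). Hence every \(n\)-generated subalgebra lies in \(\Var(I(E_k))\), while the whole algebra does not, so \(\Var(I(E_k))\) has no finite basis.

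The main obstacle is the separation step for the deleted fence. The check that fence points remain separated from anchor points when \(k\) is larger needs care, but the two-level shift above should settle it.
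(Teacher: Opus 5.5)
Your proposal is correct and is essentially the paper's argument: the paper builds the same \(F_n^{(k)}\) by inserting \(k-4\) points below \(a_1\) and \(b_1\), so the fence stays attached at rank \(k-1\). It then reuses the maps \(f,g\) of Lemma~\ref{lem:fence}, which land exactly on your levels \(k-2,k-1\) and \(k-3,k-2\), and repeats the counting argument with \(F_{2n+2}^{(k)}\).

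Your caveat about same-level fence points is genuinely needed, since \(f\) and \(g\) never separate two \(v_i\) (or two \(u_j\)) in the same component; the paper's Lemma~\ref{lem:fence} also passes over this. It is closed as you suggest, e.g.\ by sending one chosen \(v_t\) to level \(k-3\) (the other \(v_i\) at \(k-2\), the \(u_j\) at \(k-1\)), or one chosen \(u_t\) to level \(k-2\) (the other \(u_j\) at \(k-1\), all \(v_i\) at \(k-3\)); both maps use \(k\geq 4\).
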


\begin{proof}
By Proposition~\ref{prop:nfb}, we may assume that \(k\geq 5\). In both
\(E_4\) and each \(F_n\) constructed above, insert \(r=k-4\) new
points into the chain from \(\bot\) to \(a_1\), and similarly into the
chain from \(\bot\) to \(b_1\), denoting them by \(c_1,\dots,c_r\) and
\(d_1,\dots,d_r\), respectively. Since every strictly order preserving preserves the
ranks of interior points, the key Lemma~\ref{lem:fence} remains valid
when \(E_4\) and \(F_n\) are replaced by \(E_k\) and \(F_n^{(k)}\),
respectively. Arguing as in Proposition~\ref{prop:nfb}, one can then
use \(I(F_{2n+2}^{(k)})\) to prove that \(I(E_k)\) is nonfinitely
based.
\end{proof}

\begin{lemma}\label{lem:iekmaxvk}
For each \(k\geq 4\), the unique maximal proper subvariety of
\(\Var(I(E_k))\) is \(\V_k\).
\end{lemma}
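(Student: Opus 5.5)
We must show two things: that \(\V_k\) is a proper subvariety of \(\Var(I(E_k))\), and that every proper subvariety \(\W\) of \(\Var(I(E_k))\) is contained in \(\V_k\). The first is immediate. \(C_k\) is a subposet of \(E_k\), so \(I(C_k)\) is a subsemiring of \(I(E_k)\) by Lemma~\ref{lem:icksa1ak}(i). Moreover \(E_k\) has two \(k\)-step chains, so by Lemma~\ref{lem:height-u}(ii) \(I(E_k)\) fails \(U_k\), while \(\V_k\models U_k\).

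For the second part, note that \(\Var(I(E_k))\subseteq\V_\infty\) is locally finite by Lemma~\ref{localfinite}. Hence \(\W\) is generated by its finite subdirectly irreducible members. By Proposition~\ref{prop:representation}, each of these is \(I(P)\) for a bounded poset \(P\), and \(\htp(P)\le k\) since \(I(E_k)\models N_k\). By Proposition~\ref{prop:bounded-separation}, it suffices to show the following: if some such \(I(P)\in\W\) has two distinct \(k\)-step chains, then \(I(E_k)\in\W\), which forces \(\W=\Var(I(E_k))\). Equivalently, whenever \(I(P)\in\Var(I(E_k))\) with \(P\) bounded of height \(k\) and containing two distinct maximal chains of length \(k\), we want \(I(E_k)\in\Var(I(P))\). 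By Proposition~\ref{prop:joint-separation}, this reduces to a purely combinatorial claim. Namely, we need strictly order preserving maps \(E_k\to P\) that jointly separate \(E_k\).

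We construct these maps from the hypothesis. Since \(I(P)\in\Var(I(E_k))\), Proposition~\ref{prop:joint-separation} gives a family of strictly order preserving maps \(P\to E_k\) jointly separating \(P\). Take two distinct \(k\)-step chains \(c=(\bot=p_0<\cdots<p_k=\top)\) and \(c'=(\bot=q_0<\cdots<q_k=\top)\) in \(P\). These are rigid, so \(r(p_i)=r(q_i)=i\). Choose an index \(i\) with \(p_i\ne q_i\), and a map \(g\colon P\to E_k\) with \(g(p_i)\ne g(q_i)\). Any strictly order preserving map from a height-\(k\) poset to \(E_k\) sends \(\bot,\top\) to \(\bot,\top\) and maps rank-\(j\) rigid points to rank-\(j\) points. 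It also sends each full \(k\)-chain onto one of the two chains of \(E_k\). Hence \(g(c)\) and \(g(c')\) are the two different chains of \(E_k\); otherwise \(g(p_i)=g(q_i)\).

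We then define \(h\colon E_k\to P\) by sending the \(g(c)\)-chain onto \(c\) and the \(g(c')\)-chain onto \(c'\) via \(a_j\mapsto p_j\), \(b_j\mapsto q_j\) (after relabelling). This map is strictly order preserving, since the only relations in \(E_k\) lie within a chain. It remains to separate every pair of points of \(E_k\). Comparable pairs are separated automatically. The only other pairs are \(a_j,b_j\) (any \(j\)) and \(a_j,b_{j'}\) with \(j\ne j'\); the latter are separated by rank, since \(r(p_j)=j\ne j'=r(q_{j'})\). Pairs \(a_j,b_j\) with \(p_j\ne q_j\) are separated by \(h\) itself.

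The main obstacle is the indices \(j\) where \(p_j=q_j\): the two chains of \(P\) may meet in the middle, and a single \(h\) will not separate \(a_j\) from \(b_j\) there. I would first try to handle this using the fact that \(g\) maps \(p_j\) into the \(g(c)\)-chain and \(q_j\) into the \(g(c')\)-chain, so \(g(p_j)\ne g(q_j)\), which is impossible when \(p_j=q_j\). Hence \(g\) separates \(c\) and \(c'\) at every interior level, and in particular \(c\) and \(c'\) share only \(\bot\) and \(\top\). So \(h\) is injective and by itself jointly separates \(E_k\), giving \(I(E_k)\in\Var(I(P))\subseteq\W\). The step needing care is the claim that \(g\) maps a whole \(k\)-chain into a single chain of \(E_k\) while preserving ranks. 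This follows because consecutive images are strictly comparable, and the interior points of \(E_k\) on different chains are incomparable.
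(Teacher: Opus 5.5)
Your proof is correct and follows essentially the paper's argument: both use the jointly separating family of strictly order preserving maps \(P\to E_k\) (from Proposition~\ref{prop:joint-separation}) to force the two \(k\)-step chains of \(P\) onto different branches of \(E_k\). Both then conclude that the two chains have disjoint interiors and embed \(E_k\) into \(P\).

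The only difference is minor. The paper phrases this via connected components of the interior comparability graph and obtains an induced copy of \(E_k\). You use a single separating map \(g\) and an injective strictly order preserving map \(E_k\to P\), which suffices by Lemma~\ref{lem:icksa1ak}(i) without ruling out cross comparisons.
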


\begin{proof}
 We first show that \(\V_k\) is a
proper subvariety of \(\Var(I(E_k))\). The two full chains of \(E_k\)
violate the identity \(U_k\), so \(I(E_k)\notin\V_k\). On the other
hand, each of these two chains is a copy of \(C_k\), so \(I(C_k)\) is a
subalgebra of \(I(E_k)\). Hence \(\V_k\leq\Var(I(E_k))\), and therefore
\(\V_k<\Var(I(E_k))\).

Next let \(\mathcal U\) be a proper subvariety of \(\Var(I(E_k))\). By
Lemma~\ref{localfinite}, the variety \(\Var(I(E_k))\) is locally finite,
so \(\mathcal U\) is generated by its finite subdirectly irreducible
members. By Proposition~\ref{prop:representation}, each such member is
of the form \(I(P)\) for a finite bounded poset \(P\). Since
\(\mathcal U\leq\Var(I(E_k))\), Lemma~\ref{lem:height-u}(i) gives
\(\htp(P)\leq k\).

We claim that every such \(P\) has at most one \(k\)-step chain.
Suppose, on the contrary, that \(P\) contains two different \(k\)-step
chains. Since $I(P)\in \Var(I(E_k))$, by Proposition~\ref{prop:joint-separation}, there exist strictly order preserving maps \(P\to E_k\) that jointly separate the points of \(P\). Under
every strictly order preserving map \(P\to E_k\), the interior points of a \(k\)-step
chain of \(P\) must map to the interior points of one of the two chains
of \(E_k\), because \(E_k\) has height \(k\) and no comparisons between
the interiors of its two chains. If the interiors of the two \(k\)-step
chains of \(P\) lay in the same connected component of the interior
comparability graph of \(P\), then every strictly order preserving map \(P\to E_k\) would
send both chains to the same chain of \(E_k\). Since the two chains are
different, they have distinct interior points at some common rank, and
those two points would then have the same image under every strictly order preserving,
contradicting joint separation. Hence the interiors of the two \(k\)-step
chains lie in different connected components. In particular, they are
disjoint and have no interior cross comparisons. Together with the
bounds, they form an induced copy of \(E_k\) in \(P\). Therefore
\(I(E_k)\) is a subalgebra of \(I(P)\), so
\(\Var(I(E_k))\leq\mathcal U\), contradicting
\(\mathcal U<\Var(I(E_k))\).

Thus every finite subdirectly irreducible generator \(I(P)\) of
\(\mathcal U\) has at most one \(k\)-step chain. By
Proposition~\ref{prop:bounded-separation}, such an \(I(P)\) belongs to \(\V_k\).
Hence \(\mathcal U\leq\V_k\). This implies that \(\V_k\) is the unique
maximal subvariety of \(\Var(I(E_k))\).
\end{proof}

\begin{theorem}\label{thm:explicit}
The variety \(\Var(I(E_4))\) is a limit variety, and it has exactly
\(13\) subvarieties.
\end{theorem}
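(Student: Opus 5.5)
By Proposition~\ref{prop:nfb}, \(\Var(I(E_4))\) is nonfinitely based. So it remains to show two things: every proper subvariety is finitely based, and the subvariety lattice has exactly \(13\) elements.

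\textbf{Locating the proper subvarieties.} By Lemma~\ref{lem:iekmaxvk}, every proper subvariety of \(\Var(I(E_4))\) is contained in \(\V_4\). By Corollary~\ref{coro:crossv4}, \(\V_4\) is a Cross variety with exactly \(12\) subvarieties, and \(\V_4\) itself is finitely based by Theorem~\ref{thm:bases}. Every subvariety of \(\V_4\) is therefore finitely based: apply Lemma~\ref{prosubvariety} with \(\V_4\) in place of \(\V_\infty\). Its proof is the general fact that, if a variety is finitely based and has finitely many subvarieties, then each subvariety is cut out by finitely many extra identities, one for each subvariety not below it. Hence all proper subvarieties of \(\Var(I(E_4))\) are finitely based, and \(\Var(I(E_4))\) is a limit variety.

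\textbf{Counting.} The lattice \(\Lat(\Var(I(E_4)))\) is \(\Lat(\V_4)\) together with the single new top element \(\Var(I(E_4))\), since \(\V_4\) is its unique maximal proper subvariety. This gives \(12+1=13\) subvarieties.

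The step I expect to need checking is the identification of \(\Lat(\V_4)\) with \(12\) elements, since Corollary~\ref{coro:crossv4} is stated only as an immediate consequence of \(\B_3\) being maximal in \(\V_4\). I would make it explicit as follows. Let \(\mathcal U\le\V_4\) be a subvariety. It is generated by finite subdirectly irreducibles \(I(P)\), where, by Proposition~\ref{prop:bounded-separation}, each \(P\) has height at most \(4\) and at most one \(4\)-step chain. There are two cases.
\begin{itemize}
\item If some such \(P\) has height \(4\), then \(I(C_4)\in\mathcal U\), so \(\mathcal U=\V_4\).
\item Otherwise every such \(P\) has height at most \(3\), so \(\mathcal U\le\B_3\).
\end{itemize}
Thus \(\Lat(\V_4)=\Lat(\B_3)\cup\{\V_4\}\), which has \(11+1=12\) elements by the description of \(\Lat(\B_3)\). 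Combining this with Lemma~\ref{lem:iekmaxvk} yields the \(13\) subvarieties, forming a chain over \(\B_3\): \(\B_3<\V_4<\Var(I(E_4))\).
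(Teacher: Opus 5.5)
Your proposal is correct and is essentially the paper's proof: nonfinite basis comes from Proposition~\ref{prop:nfb}, \(\V_4\) is the unique maximal proper subvariety by Lemma~\ref{lem:iekmaxvk}, and the Cross property and \(12\)-element lattice of \(\V_4\) come from Corollary~\ref{coro:crossv4}, giving \(12+1=13\). Your extra check that every proper subvariety of \(\V_4\) lies in \(\B_3\) is sound, using that \(I(C_4)\) is a subalgebra of any \(I(P)\) with \(\htp(P)=4\); it makes explicit the uniqueness of \(\B_3\) as the maximal subvariety of \(\V_4\), which the paper's corollary takes for granted from the last paragraph of Proposition~\ref{prop:sandwich}.
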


\begin{proof}
By Lemma~\ref{lem:iekmaxvk}, \(\V_4\) is the unique maximal proper
subvariety of \(\Var(I(E_4))\). By Corollary~\ref{coro:crossv4},
\(\V_4\) is a Cross variety. Hence every proper subvariety of
\(\Var(I(E_4))\) is finitely based. By Proposition~\ref{prop:nfb},
\(\Var(I(E_4))\) itself is nonfinitely based. Therefore
\(\Var(I(E_4))\) is a limit variety, and it has exactly \(13\)
subvarieties.
\end{proof}

\section{Three finite classes controlling the classification}\label{sec:threeclasses}
For each \(k\geq 4\) and \(1\leq i\leq k-1\), set
\[
\bu_k=x_1x_2\cdots x_k+y_1y_2\cdots y_k,
\]
and define
\[
\tau_{k,i}:\quad
\bu_k\approx\bu_k+x_1\cdots x_i y_{i+1}\cdots y_k,
\qquad
\B_{k,i}=\B_k\cap\Mod(\tau_{k,i}).
\]

To understand the effect of the identity \(\tau_{k,i}\) on the members of
\(\B_k\), we state the following lemma.

\begin{lemma}\label{lem:kstepsame}
For each $k\geq 4$, $1\leq i\leq k-1$, an interval
semiring \(I(P)\) of a bounded poset of height \(k\) satisfies the identity \(\tau_{k,i}\) if and only if any
two $k$-step chains in \(P\) have the same \(i\)th interior point.
\end{lemma}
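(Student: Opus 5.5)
We apply Lemma~\ref{lem:flatuuq} to the flat semiring \(I(P)\). The task is to understand every assignment \(\varphi\) with \(\varphi(\bu_k)\neq 0\), and then to compare \(\varphi(\bu_k)\) with \(\varphi(x_1\cdots x_iy_{i+1}\cdots y_k)\).

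First we describe the assignments with nonzero value. Since addition is flat, \(\varphi(\bu_k)\neq0\) forces
\[
\varphi(x_1\cdots x_k)=\varphi(y_1\cdots y_k)=(p,q)\neq 0 .
\]
A nonzero product of \(k\) intervals in \(I(P)\) determines a chain
\[
p=c_0<c_1<\cdots<c_k=q \quad\text{with}\quad \varphi(x_j)=(c_{j-1},c_j).
\]
Likewise there is a chain \(p=d_0<d_1<\cdots<d_k=q\) with \(\varphi(y_j)=(d_{j-1},d_j)\). Since \(\htp(P)=k\) and \(P\) is bounded, each of these is a \(k\)-step chain, and this forces \(p=\bot\), \(q=\top\). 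So both chains are \(k\)-step chains from \(\bot\) to \(\top\). Conversely, any two \(k\)-step chains \((c_j)\) and \((d_j)\) in \(P\) define such an assignment by the same formulas, and then \(\varphi(\bu_k)=(\bot,\top)\). This is the same correspondence used in Lemma~\ref{lem:recovery} and Lemma~\ref{lem:strict}, applied here to the poset \(E_k\): assignments with \(\varphi(\bu_k)\neq0\) are exactly induced by strictly order preserving maps \(E_k\to P\), and these are exactly pairs of \(k\)-step chains in \(P\).

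Next we evaluate the extra word. For such an assignment,
\[
\varphi(x_1\cdots x_iy_{i+1}\cdots y_k)=(\bot,c_i)\cdot(d_i,\top).
\]
This equals \((\bot,\top)=\varphi(\bu_k)\) when \(c_i=d_i\), and equals \(0\) otherwise. Here \(1\le i\le k-1\), so \(c_i\) and \(d_i\) are the \(i\)th interior points of the two chains.

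The two directions now follow from Lemma~\ref{lem:flatuuq}.
\begin{enumerate}[\((\rm i)\)]
\item If any two \(k\)-step chains share their \(i\)th interior point, then every assignment with \(\varphi(\bu_k)\neq 0\) satisfies \(\varphi(x_1\cdots x_iy_{i+1}\cdots y_k)=\varphi(\bu_k)\). Hence \(I(P)\vDash\tau_{k,i}\).
\item If two \(k\)-step chains have \(c_i\neq d_i\), the assignment built from these two chains gives \(\varphi(\bu_k)=(\bot,\top)\neq 0\) but \(\varphi(x_1\cdots x_iy_{i+1}\cdots y_k)=0\). Hence \(\tau_{k,i}\) fails in \(I(P)\).
\end{enumerate}

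The only step needing care is the reduction to chains: we must check that a nonzero product of \(k\) intervals forces the endpoints to be \(\bot\) and \(\top\). This follows because a chain of length \(k\) in a bounded poset of height \(k\) must be saturated and run from \(\bot\) to \(\top\). Everything else is routine interval multiplication.
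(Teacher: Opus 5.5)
Your proof is correct. The paper states this lemma without proof, and your argument is essentially the intended one: $\bu_k$ is $\bt_{E_k}$ up to renaming of variables and the added word is $\bw_{a_ib_i}$, so by Lemmas~\ref{lem:strict} and~\ref{lem:characteristic} the identity $\tau_{k,i}$ holds in $I(P)$ if and only if every strictly order preserving map $E_k\to P$ identifies $a_i$ and $b_i$, and such maps into a bounded poset of height $k$ are exactly pairs of $k$-step chains from $\bot$ to $\top$; you carry this out directly through Lemma~\ref{lem:flatuuq}, and you correctly never need the hypothesis $k\geq 4$.
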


\begin{proposition}\label{prop:bkiqki}
For each \(k\geq 2\) and \(1\leq i\leq k-1\), let $Q_{k,i}=Q_k\setminus\{b_i\}$.
Then
\[
\B_{k,i}=\Var(I(Q_{k,i})).
\]
\end{proposition}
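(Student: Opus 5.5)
The plan is to prove the two inclusions separately, reducing the nontrivial one to joint separation by strictly order preserving maps into \(Q_{k,i}\), as in Propositions~\ref{prop:finite} and~\ref{lem:ipck}.

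\emph{The inclusion \(\Var(I(Q_{k,i}))\subseteq\B_{k,i}\).} Note that \(Q_{k,i}\) is the ordinal sum
\[
\{\bot\}\oplus\{a_1,b_1\}\oplus\cdots\oplus\{a_i\}\oplus\cdots\oplus\{a_{k-1},b_{k-1}\}\oplus\{\top\}.
\]
It is a bounded poset of height \(k\), so \(I(Q_{k,i})\in\B_k\) by Lemma~\ref{lem:sibk}. Every \(k\)-step chain of \(Q_{k,i}\) has \(a_i\) as its \(i\)th interior point. Hence \(I(Q_{k,i})\vDash\tau_{k,i}\) by Lemma~\ref{lem:kstepsame}. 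For small \(k\) I would verify this directly from the argument of Lemma~\ref{lem:characteristic}, since that lemma is stated only for \(k\geq4\).

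\emph{The inclusion \(\B_{k,i}\subseteq\Var(I(Q_{k,i}))\).} By Lemma~\ref{localfinite}, \(\B_{k,i}\) is locally finite, so it is generated by its finite subdirectly irreducible members. By Proposition~\ref{prop:representation} and Lemma~\ref{lem:height-u}, each such member is \(I(P)\) for a bounded poset \(P\) of height at most \(k\) satisfying \(\tau_{k,i}\). By Lemma~\ref{lem:separation-hsp}, it suffices to find strictly order preserving maps \(P\to Q_{k,i}\) that jointly separate \(P\). The basic device is the following: any map \(h\colon P\to\{0,\dots,k\}\) that is strictly increasing along \(<\) and never takes the value \(i\) on two points that must be separated lifts to strictly order preserving maps into \(Q_{k,i}\). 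Send value \(0\) to \(\bot\), value \(k\) to \(\top\), value \(i\) to \(a_i\), and each other value \(j\) to either \(a_j\) or \(b_j\), chosen independently point by point. This works because \(Q_{k,i}\) is an ordinal sum of antichains. These free \(a/b\) choices separate any two points that receive the same value \(j\neq i\).

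\emph{Case \(\htp(P)<k\).} Take \(h(p)=r(p)\) if \(r(p)<i\), and \(h(p)=r(p)+1\) if \(r(p)\geq i\), with \(h(\top)=k\). This skips level \(i\) entirely. Together with the \(a/b\) choices, it separates all points.

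\emph{Case \(\htp(P)=k\).} Use the rank map \(h_0=r\) with all \(a/b\) choices. This separates points of different ranks, and points of equal rank \(j\neq i\). It remains to separate two distinct points \(p,q\) of rank \(i\). By Lemma~\ref{lem:kstepsame}, at most one point of rank \(i\) is rigid, so we may assume \(p\) is nonrigid. As in Lemma~\ref{lem:ipck}, define
\[
h_p(x)=\begin{cases}\max\{r(x),\,r(p)+1+\ell(p,x)\}, & p\leq x,\\ r(x), & p\not\leq x.\end{cases}
\]
Nonrigidity of \(p\) gives \(h_p\leq k\), and \(h_p\) is strictly increasing. Since \(p\) and \(q\) have the same rank they are incomparable, so \(h_p(p)=i+1\) while \(h_p(q)=i\). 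Lifting \(h_p\) then separates \(p\) from \(q\).

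The main obstacle is checking that these lifts remain strictly order preserving even though many points are sent to the single element \(a_i\). The point to verify is that whenever \(x<y\) we have \(h(x)<h(y)\) strictly. Then at most one of \(x,y\) lands at level \(i\), and distinct levels of \(Q_{k,i}\) are fully comparable, so the relation \(x<y\) is preserved.
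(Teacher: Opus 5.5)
Your proof is correct. It shares the paper's skeleton: the two inclusions, local finiteness, reduction to joint separation via Lemma~\ref{lem:separation-hsp}, and a split on \(\htp(P)\). It handles both cases differently, however.

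\emph{Height below \(k\).} For \(\htp(P)\leq k-1\) the paper invokes Lemma~\ref{lem:ipck} to get \(I(P)\in\V_k\subseteq\Var(I(Q_{k,i}))\). You instead build level-skipping maps into \(Q_{k,i}\) directly. Both routes work.

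\emph{Height exactly \(k\).} Here the difference matters. The paper asserts that \(P\) has a \emph{unique} point of rank \(i\). It then reuses the maps of Proposition~\ref{prop:finite}, which send a chosen rank-\(i\) point to \(b_i\), as maps into \(Q_{k,i}\). But \(\tau_{k,i}\) only forces uniqueness of the \emph{rigid} rank-\(i\) point. For example, take the chain \(\bot<c_1<c_2<c_3<\top\) and add a point \(x\) with \(c_1<x<\top\). This poset has a single \(4\)-step chain, so it satisfies \(\tau_{4,2}\). Yet it has two rank-\(2\) points, \(c_2\) and the nonrigid \(x\), and the altered map \(f_x\) of Proposition~\ref{prop:finite} does not land in \(Q_{4,2}\). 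Your shifting map \(h_p\), borrowed from Lemma~\ref{lem:ipck}, lifts a nonrigid rank-\(i\) point \(p\) to level \(i+1\) while keeping the other point at level \(i\). That covers exactly the case the paper's argument overlooks, so your version is the more complete proof.

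Two cosmetic points. First, the lift of any strictly increasing \(h\) is strictly order preserving wherever the value \(i\) occurs; that condition only affects separation, not the lifting. Second, say explicitly that each of your maps \(h\) takes the value \(0\) only at \(\bot\) and the value \(k\) only at \(\top\). This holds because \(h(\bot)=0\), \(h(\top)=k\) and \(h\) is strictly increasing, and it ensures no pair is collapsed onto the bounds.
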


\begin{proof}
Since \(Q_{k,i}\) is a bounded poset of height \(k\) and has a unique
interior point of rank \(i\), it follows from Lemmas~\ref{lem:sibk} and
\ref{lem:kstepsame} that \(I(Q_{k,i})\) lies in the variety \(\B_k\) and
satisfies the identity \(\tau_i\). Thus
\(I(Q_{k,i})\in\B_{k,i}\).

Conversely, let \(I(P)\) be an interval semiring in \(\B_{k,i}\). If
\(\htp(P)\leq k-1\), then by Lemma~\ref{lem:ipck},
\[
I(P)\in\Var(I(C_k))\subseteq\Var(I(Q_{k,i})),
\]
since \(Q_{k,i}\) contains a \(k\)-step chain.

If \(\htp(P)=k\), then any two \(k\)-step chains in \(P\) have the same
\(i\)-th interior point. By the proof of Proposition~\ref{prop:finite},
the poset \(P\) can be separated by a family of strictly order preserving maps
\(P\to Q_k\). Since \(P\) has a unique point of rank \(i\), all the
strictly order preserving maps occurring in that proof may be regarded as strictly order preserving maps
\(P\to Q_{k,i}\). By Proposition~\ref{prop:joint-separation}, it follows
that \(I(P)\in\Var(I(Q_{k,i}))\).

This proves that \(\B_{k,i}=\Var(I(Q_{k,i}))\).
\end{proof}

\begin{proposition}\label{prop:bkbkibkj}
For each \(k\geq 4\), and $1\leq i<j\leq k-1$, $\B_k=\B_{k,i}\lor\B_{k,j}$.
\end{proposition}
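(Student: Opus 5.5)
The inclusion \(\B_{k,i}\lor\B_{k,j}\subseteq\B_k\) holds because both \(\B_{k,i}\) and \(\B_{k,j}\) are subvarieties of \(\B_k\) by definition. For the reverse inclusion, Proposition~\ref{prop:finite} gives \(\B_k=\Var(I(Q_k))\), so it suffices to show that \(I(Q_k)\in\Var(I(Q_{k,i}),I(Q_{k,j}))\). By Proposition~\ref{prop:bkiqki}, this join is exactly \(\B_{k,i}\lor\B_{k,j}\). I would prove this membership with Lemma~\ref{lem:separation-hsp}: construct a family of strictly order preserving maps from \(Q_k\) into \(Q_{k,i}\) or \(Q_{k,j}\) that jointly separates the points of \(Q_k\).

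The maps themselves are simple. Define \(g\colon Q_k\to Q_{k,i}\) to be the identity on every point except \(b_i\), and set \(g(b_i)=a_i\). Since \(Q_k\) is an ordinal sum of antichains and \(Q_{k,i}\) is also such an ordinal sum, with level \(i\) reduced to \(\{a_i\}\), any map that preserves levels is strictly order preserving. In particular, \(g\) is strictly order preserving. Define \(h\colon Q_k\to Q_{k,j}\) in the same way, as the identity except that \(h(b_j)=a_j\). Every level-preserving map separates points on different levels. On level \(t\), the pair \(a_t,b_t\) is separated by \(g\) whenever \(t\neq i\), and by \(h\) whenever \(t\neq j\). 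Because \(i\neq j\), every level \(t\) is handled by at least one of the two maps. Thus \(\{g,h\}\) jointly separates \(Q_k\), and Lemma~\ref{lem:separation-hsp} gives the required membership.

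There is essentially no obstacle. The only point to check is that strict order preservation holds when \(b_i\) is collapsed onto \(a_i\). It holds because every strict comparison in \(Q_k\) is between points on different levels, and each strict relation between levels \(s<t\) is total in both \(Q_k\) and \(Q_{k,i}\). The hypothesis \(k\geq 4\) is not needed here; the argument only uses \(i\neq j\).
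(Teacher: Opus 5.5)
Your proposal is correct and follows essentially the same route as the paper. The paper also reduces to $I(Q_k)\in\Var(I(Q_{k,i}),I(Q_{k,j}))$ via Propositions~\ref{prop:finite} and~\ref{prop:bkiqki}, and it uses exactly the two maps $Q_k\to Q_{k,t}$ ($t\in\{i,j\}$) that collapse $b_t$ onto $a_t$ and fix everything else, followed by Lemma~\ref{lem:separation-hsp}. You also supply the checks the paper leaves as ``easy to verify'': strict order preservation holds because the maps preserve levels, and every point of a lower level lies below every point of a higher level. Joint separation holds because each level $t$ is split by the map whose collapsed index differs from $t$.
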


\begin{proof}
Since \(\B_{k,i}\) and \(\B_{k,j}\) are subvarieties of \(\B_k\), it
follows that
\[
\B_{k,i}\lor\B_{k,j}\leq\B_k.
\]

Conversely, for $t\in \{i,j\}$, define a map \(f_t\colon Q_k\to Q_{k,t}\) by \(f(b_t)=a_t\)
and fixing all other points.

It is easy to verify that \(f_i\) and \(f_j\) are strictly order preserving maps that jointly
separate \(Q_k\). Hence, by Lemma~\ref{lem:separation-hsp} and Propositions~\ref{prop:finite} and~\ref{prop:bkiqki},
it follows that
\[
\B_k=\Var(I(Q_k))
\subseteq\Var(I(Q_{k,i}),I(Q_{k,j}))
=\B_{k,i}\lor\B_{k,j}.
\]
Together with the reverse inclusion, this gives
\[
\B_k=\B_{k,i}\lor\B_{k,j}.
\]
This completes the proof.
\end{proof}

\begin{lemma}\label{lem:threeclasses}
Each of \(\B_{4,1}\), \(\B_{4,2}\) and \(\B_{4,3}\) is a Cross
variety.
\end{lemma}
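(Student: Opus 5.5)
To show that each \(\B_{4,i}\) is a Cross variety we need three things: it is finitely based, finitely generated, and has finitely many subvarieties. The first two are immediate. By definition \(\B_{4,i}=\V_\infty\cap\Mod(N_4,\tau_{4,i})\), so Theorem~\ref{thm:vinftyfb} gives the explicit finite basis \(\Delta_\infty\cup\{N_4,\tau_{4,i}\}\). By Proposition~\ref{prop:bkiqki}, \(\B_{4,i}=\Var(I(Q_{4,i}))\) with \(Q_{4,i}\) finite. The work is in bounding the subvariety lattice.

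I would follow the pattern of Proposition~\ref{prop:seven}.
\begin{itemize}
\item By Lemma~\ref{localfinite}, every subvariety of \(\B_{4,i}\) is generated by finite subdirectly irreducible members.
\item By Proposition~\ref{prop:representation} and Lemma~\ref{lem:kstepsame}, these are the \(I(P)\) with \(P\) bounded, \(\htp(P)\leq 4\), and all \(4\)-step chains of \(P\) passing through a common point \(c\) of rank \(i\).
\item If \(\htp(P)\leq 3\), then \(I(P)\in\B_3\), which has only \(11\) subvarieties.
\item If \(\htp(P)=4\), I aim to find a fixed finite list \(\mathcal F_i\) of bounded posets of height \(4\) satisfying the \(\tau_{4,i}\) condition. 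For every such \(P\), the variety \(\Var(I(P))\) should be generated by the members of \(\mathcal F_i\cup\{\text{the seven posets of Figure~\ref{fig:seven-posets}}\}\) that occur as subposets of \(P\).
\end{itemize}
Then every subvariety is a join of varieties generated by subsets of a finite set, so there are finitely many. Finite basedness of all subvarieties then follows from Lemma~\ref{prosubvariety} applied inside \(\B_{4,i}\).

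The core step is the separation argument for \(\htp(P)=4\) via Lemma~\ref{lem:separation-hsp}. Because every \(4\)-step chain passes through \(c\), the rigid points are exactly \(c\) together with points in \(\downarrow c\) and \(\uparrow c\) of the appropriate ranks. Take \(i=1\) as the model case (\(i=3\) is dual, and \(i=2\) is the symmetric middle case).
\begin{itemize}
\item Above \(c\) lies a bounded poset \([c,\top]\) of height \(3\). Restricting attention there, the case analysis of Proposition~\ref{prop:seven} shows that pairs in \(\uparrow c\) are separated by maps into \(C\oplus R\), where \(R\) is one of the seven posets of Figure~\ref{fig:seven-posets}.
\item The nonrigid points are handled as in Lemma~\ref{lem:ipck}. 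They can be pushed up or down one rank, or sent into a small gadget, for example an extra point of rank \(1\) beside \(c\), or a point hanging off the chain.
\end{itemize}
So \(\mathcal F_1\) would consist of the posets obtained from \(C_1\oplus R\) by adjoining at most one or two nonrigid points in the few possible positions relative to the chain. One must check that each map fixes \(\bot,\top\) and \(c\), and stays strict on comparabilities between nonrigid and rigid points. Here the fact that there is no \(4\)-step chain avoiding \(c\) should supply the room needed to shift nonrigid points by one rank.

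The main obstacle is exactly this uniformity. Section~\ref{sec:independence} shows that in \(\B_4\) the bounded posets \(P_n\) give an infinite antichain, so the argument must use the hypothesis in an essential way. That hypothesis is that all maximal chains meet at the rank-\(i\) point \(c\). It forces every nonrigid interior point to lie on a chain of length at most \(3\) between the bounds, so it can be collapsed onto the rigid skeleton. The collapse should be arranged so that only a bounded configuration of nonrigid points ever needs to be kept distinct. I would first try to prove that two nonrigid points can always be separated by a map into \(Q_{4,i}\) itself, via a rank-shifting map as in Lemma~\ref{lem:ipck}. If that succeeds, \(\mathcal F_i\) reduces to the subposets of \(Q_{4,i}\), and the case \(i=2\), where \(c\) sits in the middle of both halves, needs the most care.
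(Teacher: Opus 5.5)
Your overall framework matches the paper's. The finite basis is $\Delta_\infty\cup\{N_4,\tau_{4,i}\}$, finite generation comes from Proposition~\ref{prop:bkiqki}, and finiteness of $\Lat(\B_{4,i})$ is to come from showing that, for every bounded $P$ of height $4$ in $\B_{4,i}$, $\Var(I(P))$ is generated by those members of a fixed finite list that occur as subposets of $P$. But that last claim is the entire content of the lemma, and you never establish it. The list $\mathcal F_i$ is never specified and no separating family is produced. Strictness at nonrigid points is left as ``one must check'', and the case $i=2$ gets no argument at all. As written, the proposal is a plan whose key step is conjectural (``I aim to find'', ``if that succeeds''). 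Your fallback would also fail as stated. Separating points by maps into $Q_{4,i}$ only shows $I(P)\in\Var(I(Q_{4,i}))=\B_{4,i}$, which is trivial. To bound the subvariety lattice, each target must itself be a subposet of $P$.

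The missing ingredients are the following.
\begin{enumerate}
\item Nonrigid points never need gadgets. For any nonrigid interior point $a$ of any bounded $P$ of height $4$, the rank-shift map $f_a\colon P\to C_4$ from the proof of Lemma~\ref{lem:ipck} is strictly order preserving. The chain-count hypothesis there is used only to guarantee that one of two equal-rank points is nonrigid. Together with the rank map, $f_a$ separates $a$ from every other point. Since $C_4$ is a subposet of $P$, every pair involving a nonrigid point is handled by $I(C_4)$.
\item The real difficulty lies with rigid points; every point of $P_n$ is rigid. Your remark that the hypothesis forces nonrigid points onto short chains is just the definition of nonrigid. What $\tau_{4,i}$ actually provides is a unique rigid point $c$ of rank $i$.
\begin{itemize}
\item For $i=1$, all other rigid points lie above $c$. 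The height-three analysis of Proposition~\ref{prop:seven}, applied to the rigid points of $[c,\top]$, separates equal-rank rigid pairs by rank-preserving maps into $L_3$, $U_3$, $E_3$ with a new bottom adjoined.
\item The case $i=3$ is dual.
\item For $i=2$, two rigid rank-$1$ points $a\neq b$ are separated by the map into $L_4$ with $a\mapsto a_1$, $b\mapsto b_1$ and $p\mapsto a_{r(p)}$ otherwise. Rank-$3$ pairs are handled dually via $U_4$.
\end{itemize}
\item These maps extend strictly to nonrigid points, for three reasons. A nonrigid interior point has rank $1$ or $2$. A nonrigid rank-$2$ point has no interior successor. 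A nonrigid rank-$1$ point lies below no rigid rank-$2$ point.
\end{enumerate}
With these facts the list consists of the seven height-$\le 3$ templates, $C_4$, and three height-four templates. Each of these occurs as a subposet of $P$ whenever it is used.
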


\begin{proof}
Analogously to Proposition~\ref{prop:seven}, for each \(\B_{4,i}\) we
shall give a finite list of interval semirings of bounded posets such
that the variety generated by every interval semiring of a bounded poset
in \(\B_{4,i}\) is equal to the variety generated by some of the
interval semirings of bounded posets in that list.

Let \(I(P)\) be an interval semiring in \(\B_{4,i}\). If
\(\htp(P)\leq 3\), then the seven templates of
Proposition~\ref{prop:seven} suffice. Next, it remains only to consider
the case \(\htp(P)=4\).

Let \(P\) have height four. Then the rigid points have ranks
\(0,1,2,3,4\), respectively. A nonrigid interior point has rank one or
two. A nonrigid point of rank two has no interior successor, and a
nonrigid point of rank one cannot lie below a rigid point of rank two,
since either comparison would put it on a full four-step chain. By
Lemma~\ref{lem:ipck}, every pair involving a nonrigid point is separated
by maps into an existing \(C_4\). It remains to separate rigid points of
equal rank.

\smallskip
\noindent\textbf{Case 1.} \(P\in\B_{4,1}\).
Then \(P\) has exactly one rigid point of rank one. For each of
\(L_3,U_3,E_3\) in Figure~\ref{fig:seven-posets}, adjoining a new bottom
gives three fixed induced targets of height four, denoted by
\(L_3',U_4,E_3'\), respectively; see Figure~\ref{fig:five-posets}.
Since every strictly order preserving map from \(P\) to one of \(L_3',U_4,E_3'\) fixes the
points of ranks \(0\) and \(1\), we may apply the height-three argument
of Proposition~\ref{prop:seven}. These maps preserve ranks, and hence
every pair of rigid points of equal rank in \(P\) can be separated by a
strictly order preserving map from \(P\) to one of the posets in
\(\{L_3',U_4,E_3'\}\). Extending such a map to all of \(P\) by sending
every nonrigid rank-one point to the target rank-one point and every
nonrigid rank-two point to a chosen target rank-three point preserves
strict order preservation, by the same rank restrictions as above. Thus a fixed list of
eleven semirings suffices.

\smallskip
\noindent\textbf{Case 2.} \(P\in\B_{4,3}\).
This case is the order dual of \(\B_{4,1}\): reversing multiplication
transports all constructions and the identity \(\tau_{4,1}\) to
\(\tau_{4,3}\) after renaming variables. In particular, every member of
\(\B_{4,1}\) is finitely based if and only if every member of
\(\B_{4,3}\) is finitely based, and the subvariety lattices of
\(\B_{4,1}\) and \(\B_{4,3}\) are isomorphic. Hence \(\B_{4,3}\) is also
a Cross variety.

\smallskip
\noindent\textbf{Case 3.} \(P\in\B_{4,2}\).
We separate \(P\) by strictly order preserving maps from \(P\) to \(L_4\) and \(U_4\); see
Figure~\ref{fig:five-posets}.
Suppose first that the rigid points of rank \(1\) in \(P\) are not
unique. For any two distinct rigid points \(a,b\) of rank \(1\) in
\(P\), define a strictly order preserving map \(f_{a,b}\colon P\to L_4\) by fixing \(\bot\)
and \(\top\), setting \(f(a)=a_1\) and \(f(b)=b_1\), and defining
\(f(p)=a_{\ell(\bot,p)}\) for every other interior point \(p\). Then
\(f_{a,b}\) is strictly order preserving and separates \(a\) from \(b\).

Similarly, for any two distinct rigid points \(a,b\) of rank \(3\) in
\(P\), define a strictly order preserving map \(g_{a,b}\colon P\to U_4\) by fixing \(\bot\)
and \(\top\), setting \(g(a)=a_3\) and \(g(b)=b_3\), and defining
\(g(p)=a_{\ell(\bot,p)}\) for every other interior point \(p\). Then
\(g_{a,b}\) is strictly order preserving and separates \(a\) from \(b\).

\smallskip
All selected targets are induced bounded subposets of the source.
Consequently every subvariety of \(\B_{4,1}\) or \(\B_{4,3}\) is
generated by a subset of a fixed list of eleven finite semirings; for
\(\B_{4,2}\) ten suffice. Hence each \(\B_{4,i}\) has only finitely many
subvarieties, and every subvariety of \(\B_{4,i}\) is finitely
generated. Since each \(\B_{4,i}\) is finitely based, finitely generated
and has only finitely many subvarieties, it follows that each
\(\B_{4,i}\) is a Cross variety. In particular, every subvariety of
\(\B_{4,i}\) is finitely based.
\end{proof}

\begin{figure}[htbp]
\centering
\begin{tikzpicture}[
  scale=0.7,
  elem/.style={circle, fill=black, inner sep=1.3pt},
  lab/.style={font=\footnotesize},
  edge/.style={thick}
]

\begin{scope}[xshift=0cm]
  \node[elem] (c4b)  at (0,0) {};
  \node[elem] (c4a1) at (0,1.2) {};
  \node[elem] (c4a2) at (0,2.4) {};
  \node[elem] (c4a3) at (0,3.6) {};
  \node[elem] (c4t)  at (0,4.8) {};
  \node[lab] at (0,-0.55) {\(\bot\)};
  \node[lab] at (0.50,1.2) {\(a_1\)};
  \node[lab] at (0.50,2.4) {\(a_2\)};
  \node[lab] at (0.50,3.6) {\(a_3\)};
  \node[lab] at (0,5.35)   {\(\top\)};
  \draw[edge] (c4b) -- (c4a1);
  \draw[edge] (c4a1) -- (c4a2);
  \draw[edge] (c4a2) -- (c4a3);
  \draw[edge] (c4a3) -- (c4t);
  \node[lab] at (0,-1.2) {\(C_4\)};
\end{scope}

\begin{scope}[xshift=3.4cm]
  \node[elem] (l4b)   at (0,0) {};
  \node[elem] (l4a1)  at (-1.0,1.2) {};
  \node[elem] (l4b1)  at (1.0,1.2)  {};
  \node[elem] (l4a2)  at (0,2.4)    {};
  \node[elem] (l4a3)  at (0,3.6)    {};
  \node[elem] (l4t)   at (0,4.8)    {};
  \node[lab] at (0,-0.55) {\(\bot\)};
  \node[lab] at (-1.60,1.2) {\(a_1\)};
  \node[lab] at (1.60,1.2)  {\(b_1\)};
  \node[lab] at (0.50,2.4)  {\(a_2\)};
  \node[lab] at (0.50,3.6)  {\(a_3\)};
  \node[lab] at (0,5.35)    {\(\top\)};
  \draw[edge] (l4b) -- (l4a1);
  \draw[edge] (l4b) -- (l4b1);
  \draw[edge] (l4a1) -- (l4a2);
  \draw[edge] (l4b1) -- (l4a2);
  \draw[edge] (l4a2) -- (l4a3);
  \draw[edge] (l4a3) -- (l4t);
  \node[lab] at (0,-1.2) {\(L_4\)};
\end{scope}

\begin{scope}[xshift=6.9cm]
  \node[elem] (u4b)   at (0,0) {};
  \node[elem] (u4a1)  at (0,1.2) {};
  \node[elem] (u4a2)  at (0,2.4) {};
  \node[elem] (u4a3)  at (-1.0,3.6) {};
  \node[elem] (u4b3)  at (1.0,3.6)  {};
  \node[elem] (u4t)   at (0,4.8)    {};
  \node[lab] at (0,-0.55) {\(\bot\)};
  \node[lab] at (0.50,1.2) {\(a_1\)};
  \node[lab] at (0.50,2.4) {\(a_2\)};
  \node[lab] at (-1.60,3.6) {\(a_3\)};
  \node[lab] at (1.60,3.6)  {\(b_3\)};
  \node[lab] at (0,5.35)    {\(\top\)};
  \draw[edge] (u4b) -- (u4a1);
  \draw[edge] (u4a1) -- (u4a2);
  \draw[edge] (u4a2) -- (u4a3);
  \draw[edge] (u4a2) -- (u4b3);
  \draw[edge] (u4a3) -- (u4t);
  \draw[edge] (u4b3) -- (u4t);
  \node[lab] at (0,-1.2) {\(U_4\)};
\end{scope}

\begin{scope}[xshift=10.4cm]
  \node[elem] (l4pb)   at (0,0) {};
  \node[elem] (l4pa1)  at (0,1.2) {};
  \node[elem] (l4pa2)  at (-1.0,2.4) {};
  \node[elem] (l4pb2)  at (1.0,2.4)  {};
  \node[elem] (l4pa3)  at (0,3.6)    {};
  \node[elem] (l4pt)   at (0,4.8)    {};
  \node[lab] at (0,-0.55) {\(\bot\)};
  \node[lab] at (0.50,1.2) {\(a_1\)};
  \node[lab] at (-1.60,2.4) {\(a_2\)};
  \node[lab] at (1.40,2.4)  {\(b_2\)};
  \node[lab] at (0.50,3.6)  {\(a_3\)};
  \node[lab] at (0,5.35)    {\(\top\)};
  \draw[edge] (l4pb) -- (l4pa1);
  \draw[edge] (l4pa1) -- (l4pa2);
  \draw[edge] (l4pa1) -- (l4pb2);
  \draw[edge] (l4pa2) -- (l4pa3);
  \draw[edge] (l4pb2) -- (l4pa3);
  \draw[edge] (l4pa3) -- (l4pt);
  \node[lab] at (0,-1.2) {\(L_3'\)};
\end{scope}

\begin{scope}[xshift=14.0cm]
  \node[elem] (e3pb)   at (0,0) {};
  \node[elem] (e3pa1)  at (0,1.2) {};
  \node[elem] (e3pa2)  at (-1.2,2.4) {};
  \node[elem] (e3pb2)  at (1.2,2.4)  {};
  \node[elem] (e3pa3)  at (-1.2,3.6) {};
  \node[elem] (e3pb3)  at (1.2,3.6)  {};
  \node[elem] (e3pt)   at (0,4.8)    {};
  \node[lab] at (0,-0.55) {\(\bot\)};
  \node[lab] at (0.55,1.2) {\(a_1\)};
  \node[lab] at (-1.75,2.4) {\(a_2\)};
  \node[lab] at (1.75,2.4)  {\(b_2\)};
  \node[lab] at (-1.75,3.6) {\(a_3\)};
  \node[lab] at (1.75,3.6)  {\(b_3\)};
  \node[lab] at (0,5.35)    {\(\top\)};
  \draw[edge] (e3pb) -- (e3pa1);
  \draw[edge] (e3pa1) -- (e3pa2);
  \draw[edge] (e3pa1) -- (e3pb2);
  \draw[edge] (e3pa2) -- (e3pa3);
  \draw[edge] (e3pb2) -- (e3pb3);
  \draw[edge] (e3pa3) -- (e3pt);
  \draw[edge] (e3pb3) -- (e3pt);
  \node[lab] at (0,-1.2) {\(E_3'\)};
\end{scope}

\end{tikzpicture}
\caption{Hasse diagrams of the five bounded posets.}
\label{fig:five-posets}
\end{figure}

\begin{theorem}
The class of finite ai-semirings generating varieties with only finitely
many subvarieties is not closed under direct products. Moreover, a
direct product of such ai-semirings can even generate a variety with
continuum many subvarieties.
\end{theorem}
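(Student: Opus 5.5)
The natural witnesses are two of the three Cross varieties of Lemma~\ref{lem:threeclasses}. I would take $\B_{4,1}$ and $\B_{4,3}$, generated by $I(Q_{4,1})$ and $I(Q_{4,3})$ by Proposition~\ref{prop:bkiqki}; any two distinct indices $i<j$ would do. Both finite semirings generate Cross varieties, so each has only finitely many subvarieties.

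First I identify the variety generated by the direct product. Since $\mathsf V(A\times B)=\mathsf V(A)\vee\mathsf V(B)$, we get
\[
\mathsf V\bigl(I(Q_{4,1})\times I(Q_{4,3})\bigr)=\B_{4,1}\vee\B_{4,3}.
\]
Proposition~\ref{prop:bkbkibkj} with $k=4$, $i=1$, $j=3$ shows that this join equals $\B_4$. Finally, Proposition~\ref{prop:k=4} gives continuum many subvarieties in $[\V_4,\B_4]$. In particular, the direct product generates a variety with $2^{\aleph_0}$ subvarieties, and so the class in question is not closed under direct products.

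All the work has already been done in the cited results, so this assembly step is essentially bookkeeping. The main thing to check is that Lemma~\ref{lem:threeclasses} really delivers \emph{finitely many} subvarieties for $\B_{4,1}$ and $\B_{4,3}$, and not merely the finite basis property. This follows from the finite list of generating templates given in its proof.

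It is also worth confirming that the interval semirings $I(P_n)$ used in Proposition~\ref{prop:k=4} indeed lie in $\B_4$ but in neither factor variety. Their $4$-step chains have varying interior points at every rank, so they fail $\tau_{4,i}$ by Lemma~\ref{lem:kstepsame}. This shows that the continuum of subvarieties appears only after taking the join.
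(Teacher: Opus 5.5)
Your argument is correct and is essentially the paper's proof. The paper pairs $I(Q_{4,1})$ with $I(Q_{4,2})$ rather than $I(Q_{4,3})$, but Proposition~\ref{prop:bkbkibkj} holds for any $1\leq i<j\leq 3$ and Lemma~\ref{lem:threeclasses} covers all three $\B_{4,i}$, so your choice works equally well, via the same identification $\mathsf V(A\times B)=\mathsf V(A)\vee\mathsf V(B)=\B_4$ and the same appeal to Proposition~\ref{prop:k=4}.
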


\begin{proof}
Consider the finite ai-semirings \(I(Q_{4,1})\) and \(I(Q_{4,2})\).
By Proposition~\ref{prop:bkiqki} and Lemma~\ref{lem:threeclasses}, they
generate the Cross varieties \(\B_{4,1}\) and \(\B_{4,2}\), respectively,
and hence each of these varieties has only finitely many subvarieties. On
the other hand, by Propositions~\ref{prop:k=4}
and~\ref{prop:bkbkibkj}, the variety \(\B_4\) is generated by their
direct product, and \(\B_4\) contains continuum many subvarieties.
\end{proof}

\begin{lemma}\label{lem:omitting}
Every subvariety of \(\V_\infty\) omitting \(I(E_4)\) is finitely
based.
\end{lemma}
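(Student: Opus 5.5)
Let \(\W\) be a subvariety of \(\V_\infty\) with \(I(E_4)\notin\W\). The plan is to show that \(\W\) lies inside one of finitely many Cross varieties, or inside a Cross variety together with a fixed finite extra identity, so that \(\W\) is finitely based.

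\textbf{Step 1: bound the height.} Since \(I(E_4)\notin\W\), we have \(\W\neq\V_\infty\). By Proposition~\ref{prop:proper}, \(\W\leq\B_k\) for some \(k\), so \(\W\) is locally finite. By Lemma~\ref{localfinite} it is generated by finite subdirectly irreducibles \(I(P)\) with \(\htp(P)\le k\), and these have no subposet \(E_4\), since otherwise \(I(E_4)\) would be a subalgebra by Lemma~\ref{lem:icksa1ak}(i). The key structural claim is that such posets are severely restricted.

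\textbf{Step 2: no two disjoint long chains.} Take any \(P\) of height \(h\ge4\) and any two maximal chains of length \(\geq 4\) from \(\bot\) to \(\top\) whose interiors are disjoint. Choose four-step subchains of them with the same bounds. These form a subposet \(E_4\), since only strict order is required to be preserved. Hence in \(\W\) any two saturated chains of length at least \(4\) share an interior point. I would try to push this further: \(P\) should contain no subposet \(E_4\) even after contracting. Using subposets (not induced ones) makes avoiding \(E_4\) a strong condition. Any two \(\bot\)–\(\top\) chains of length \(\geq4\) must meet in an interior point, and more sharply, any bounded interval \([x,y]\) with \(\ell(x,y)\ge4\) cannot contain two interior-disjoint \(4\)-step chains. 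I would expect this to force \(\htp(P)\le 7\) or so. The idea is that a long chain \(\bot<c_1<\dots<c_h<\top\) already contains \(E_4\) as a subposet once \(h\) is large: take \(\bot<c_1<c_2<c_3<\top\) and \(\bot<c_4<\dots\)? That fails, because \(c_1<c_4\) is fine for a subposet but the two chains must be distinct points sharing only the bounds. Indeed the chain \(C_7\) contains \(E_4\) as a subposet: use points \(\{1,2,3\}\) and \(\{4,5,6\}\) with bounds \(0,7\). This matches Lemma~\ref{lem:interval}, where every poset embeds in a chain. So \(I(E_4)\in\V_7\). Hence \(I(E_4)\notin\W\) forces \(I(C_7)\notin\W\), so \(\htp(P)\le6\) and \(\W\leq\B_6\). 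Refining further: \(E_4\) has \(8\) points, so the embedding into \(C_7\) is sharp, and \(\V_k\) for \(k\le6\) omits \(I(E_4)\) only if no strictly order preserving maps \(E_4\to C_k\) jointly separate \(E_4\). This holds for \(k\le 6\), since \(E_4\) has height \(4\) and needs separation across chains.

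\textbf{Step 3: finite basis inside \(\B_6\).} Now \(\W\leq\B_6\) and \(\W\) satisfies the negation of membership of \(I(E_4)\). By Lemma~\ref{lem:characteristic}, this is witnessed by identities \(\bt_{E_4}\approx\bt_{E_4}+\bw_{pq}\). The main obstacle is that \(\B_4\) already has continuum many subvarieties by Proposition~\ref{prop:k=4}, yet the posets \(P_n\) there contain \(E_4\) as a subposet (two disjoint edge–vertex–nonedge paths). So I would show that every \(P\) of height \(4\) to \(6\) omitting \(E_4\) has a bounded number of rigid points per rank. By Step 2, all long chains pairwise meet, which gives a Helly/Menger-type bound: a "bottleneck" rank with a single point, as in Lemma~\ref{lem:kstepsame}. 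Then \(\W\leq\B_{k,i}\)-type varieties. For height \(4\) these are the Cross varieties \(\B_{4,i}\) of Lemma~\ref{lem:threeclasses}, and I would prove analogous Cross statements for heights \(5,6\) in the same way. Finally, \(\W\) is a subvariety of a finite join of Cross varieties. Such a join is generated by finitely many templates, so it has finitely many subvarieties, each finitely based by the argument of Lemma~\ref{prosubvariety}. The Menger bottleneck step is where I expect the real work.
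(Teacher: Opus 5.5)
\textbf{Gap 1: the height bound.} Your claim that no family of strictly order preserving maps $E_4\to C_k$ jointly separates $E_4$ for $k\le 6$ is false for $k=5,6$. In fact $I(E_4)\in\V_5$, because $E_4$ has height $4$ and no $5$-step chain, so Lemma~\ref{lem:ipck} applies. Concretely, take the rank map $E_4\to C_5$ together with the map sending $\bot,a_1,a_2,a_3,\top$ to $0,2,3,4,5$ and each $b_i$ to $i$. Both are strictly order preserving, and together they separate every pair of points of $E_4$. Hence $I(E_4)\notin\W$ already forces $I(C_5)\notin\W$, so every subdirectly irreducible $I(P)\in\W$ has $\htp(P)\le 4$ and $\W\le\B_4$. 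This is the paper's first step. Heights $5$ and $6$ therefore never occur. Moreover, the Cross statements you propose for them cannot hold: a variety such as $\B_{5,i}$ contains $\V_5$, which contains $\B_4$ by Proposition~\ref{prop:sandwich}, and $\B_4$ has continuum many subvarieties.

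\textbf{Gap 2: the final step, which is decisive.} A per-poset bottleneck (Menger-type) argument at best places each subdirectly irreducible generator of $\W$ in some $\B_{4,i}$, with $i$ depending on the generator. That gives only $\W\le\B_{4,1}\vee\B_{4,2}\vee\B_{4,3}$. Your claim that a finite join of Cross varieties has finitely many subvarieties is false. By Proposition~\ref{prop:bkbkibkj}, $\B_{4,1}\vee\B_{4,2}=\B_4$, which has continuum many subvarieties; this is exactly the paper's closing theorem. Templates do not pass to joins: $I(Q_4)$ lies in the join but in neither summand.

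The underlying error is that you replaced the variety-level condition $I(E_4)\notin\W$ by the much weaker per-poset condition that no generator contains $E_4$ as a subposet. In reality $I(E_4)$ can enter $\W$ through joint separation by maps into several different generators.

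\textbf{The missing idea (the paper's argument).} Suppose $\W$ failed every $\tau_{4,i}$, and pick $I(P_i)\in\W$ failing $\tau_{4,i}$. Two $4$-step chains of $P_i$ with different rank-$i$ points give a strictly order preserving map $E_4\to P_i$ that separates $a_i$ from $b_i$. Together with the rank map $E_4\to C_4$, these four maps jointly separate $E_4$, so $I(E_4)\in\W$ by Lemma~\ref{lem:separation-hsp}, a contradiction. Hence $\W$ satisfies a single $\tau_{4,i}$ and lies in the one Cross variety $\B_{4,i}$ of Lemma~\ref{lem:threeclasses}, so it is finitely based.

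You were one step away. Your own witness identity $\bt_{E_4}\approx\bt_{E_4}+\bw_{a_ib_i}$ is, after renaming variables, precisely $\tau_{4,i}$. So the bottleneck analysis is unnecessary once you read that witness at the level of the whole variety rather than one poset at a time.
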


\begin{proof}
Let \(\mathcal U\) be a subvariety of \(\V_\infty\) omitting
\(I(E_4)\). Since \(I(E_4)\in\V_5=\Var(I(C_5))\), the variety
\(\mathcal U\) omits \(I(C_5)\). This shows that every interval semiring
\(I(P)\) of a bounded poset \(P\) in \(\mathcal U\) has
\(\htp(P)\leq 4\). Hence \(\mathcal U\subseteq\B_4\).

Suppose that all three identities \(\tau_{4,1}\), \(\tau_{4,2}\) and
\(\tau_{4,3}\) fail in \(\mathcal U\). For each \(1\leq i\leq 3\), there
is a finite interval semiring \(I(P_i)\) of a bounded poset \(P_i\) such
that \(I(P_i)\in\mathcal U\) fails \(\tau_{4,i}\): an identity failure
first occurs in a finitely generated subalgebra and then survives in an
SI quotient. Its two four-step chains supply a strictly order preserving map \(E_4\to P_i\)
separating the two rank-\(i\) points. Also \(I(C_4)\in\mathcal U\), since
each witness contains a full chain; the map of both branches of \(E_4\)
onto \(C_4\) separates points of different ranks. These four maps jointly
separate all points of \(E_4\). By Lemma~\ref{lem:separation-hsp},
\(I(E_4)\in\mathcal U\), a contradiction.

Therefore \(\mathcal U\) satisfies some \(\tau_{4,i}\) and hence lies in
\(\B_{4,i}\). By Lemma~\ref{lem:threeclasses}, \(\mathcal U\) is
finitely based.
\end{proof}

\begin{theorem}\label{lem:classification}
Every nonfinitely based subvariety of \(\V_\infty\) contains
\(\Var(I(E_4))\). In particular, \(\Var(I(E_4))\) is the unique limit
subvariety of \(\V_\infty\).
\end{theorem}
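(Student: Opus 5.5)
The first assertion is essentially the contrapositive of Lemma~\ref{lem:omitting}, so I would begin there. Let \(\mathcal U\) be a nonfinitely based subvariety of \(\V_\infty\). If \(I(E_4)\notin\mathcal U\), then \(\mathcal U\) omits \(I(E_4)\), and Lemma~\ref{lem:omitting} makes \(\mathcal U\) finitely based, a contradiction. Hence \(I(E_4)\in\mathcal U\). Since \(\mathcal U\) is a variety, this gives \(\Var(I(E_4))\subseteq\mathcal U\). Note that \(\V_\infty\) itself is finitely based by Theorem~\ref{thm:vinftyfb}, so this case never arises for \(\mathcal U=\V_\infty\); the argument does not need that fact, however.

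For the second assertion I would proceed in two steps.

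\textbf{Step 1: \(\Var(I(E_4))\) is a limit subvariety.} This is Theorem~\ref{thm:explicit}, since \(E_4\) has height \(4\) and so \(I(E_4)\in\B_4\subseteq\V_\infty\).

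\textbf{Step 2: uniqueness.} Let \(\mathcal L\) be any limit subvariety of \(\V_\infty\). It is nonfinitely based, so by the first part \(\Var(I(E_4))\subseteq\mathcal L\). If the inclusion were proper, then \(\Var(I(E_4))\) would be a proper subvariety of \(\mathcal L\). By the definition of a limit variety it would then be finitely based, contradicting Proposition~\ref{prop:nfb}. Hence \(\mathcal L=\Var(I(E_4))\).

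The only real content lies in Lemma~\ref{lem:omitting}, which I take as given. The delicate point hidden there is reducing to finite subdirectly irreducible witnesses \(I(P_i)\) that fail \(\tau_{4,i}\). This reduction relies on local finiteness (Lemma~\ref{localfinite}) and Proposition~\ref{prop:representation}.

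With that lemma available, I expect no obstacle beyond making sure the definition of ``limit variety'' is applied correctly. In particular, a limit variety is required to be nonfinitely based, which is exactly what licenses invoking the first part in Step 2.
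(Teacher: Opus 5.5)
Your proposal is correct and follows the paper's proof: the first assertion is the contrapositive of Lemma~\ref{lem:omitting}, and the limit property comes from Theorem~\ref{thm:explicit}. Your uniqueness step spells out what the paper compresses into its final ``Hence'': a limit subvariety $\mathcal L$ must contain $\Var(I(E_4))$, and if the inclusion were proper, $\Var(I(E_4))$ would be finitely based, contradicting Proposition~\ref{prop:nfb}.
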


\begin{proof}
By the contrapositive of Lemma~\ref{lem:omitting}, every nonfinitely
based subvariety of \(\V_\infty\) contains \(\Var(I(E_4))\). By
Theorem~\ref{thm:explicit}, \(\Var(I(E_4))\) itself is a limit variety.
Hence \(\Var(I(E_4))\) is the unique limit subvariety of
\(\V_\infty\).
\end{proof}

\end{document}